\theoremstyle{plain}
\newtheorem{thm}{Theorem}[section]
\newtheorem{cor}[thm]{Corollary}
\newtheorem{lem}[thm]{Lemma}
\newtheorem{prop}[thm]{Proposition}
\theoremstyle{definition}
\theoremstyle{remark}
\newtheorem{rem}[thm]{Remark}
\numberwithin{equation}{section}
\newcommand{\de}{\partial}
\newcommand{\fls}{(-\Delta)^s}
\newcommand{\R}{\mathbb{R}}
\newcommand{\N}{\mathbb{N}}
\newcommand{\eps}{\varepsilon}
\newcommand{\ub}{\boldsymbol{u}}
\newcommand{\fb}{\boldsymbol{f}}
\newcommand{\nb}{\boldsymbol{n}}
\newcommand{\be}{\boldsymbol{e}}
\newcommand{\average}{{\mathchoice {\kern1ex\vcenter{\hrule height.4pt
width 6pt depth0pt} \kern-9.7pt} {\kern1ex\vcenter{\hrule
height.4pt width 4.3pt depth0pt} \kern-7pt} {} {} }}
\newcommand{\ave}{\average\int}
\def\R{\mathbb{R}}
\newcommand{\mres}{\mathbin{\vrule height 1.6ex depth 0pt width
0.13ex\vrule height 0.13ex depth 0pt width 1.3ex}}
\begin{document}

\title[The Thin Obstacle Problem: A Survey]{The Thin Obstacle Problem: A Survey}

\author{Xavier Fern\'andez-Real}
\address{EPFL SB, Station 8, CH-1015 Lausanne, Switzerland}
\email{xavier.fernandez-real@epfl.ch}

\keywords{thin obstacle problem; Signorini problem; survey; free boundary; fractional obstacle problem.}

\subjclass[2010]{35R35, 47G20.}

\thanks{This work has received funding from the European Research Council (ERC) under the Grant Agreement No 721675, and from the Swiss National Science Foundation under the SNF Grant 200021\_182565}

\setcounter{tocdepth}{1}

\begin{abstract}
In this work we present a general introduction to the Signorini problem (or thin obstacle problem). It is a self-contained survey that aims to cover the main currently known results regarding the thin obstacle problem. We present the theory with some proofs: from the optimal regularity of solutions and classification of free boundary points, to more recent results on the non-regular part of the free boundary and generic regularity. 
\end{abstract}

\maketitle
\tableofcontents

\section{Introduction}

The Signorini problem (also known as the thin or boundary obstacle problem) is a classical free boundary problem that was originally studied by Antonio Signorini in connection with linear elasticity \cite{Sig33, Sig59}. The problem was originally named by Signorini himself \emph{problem with ambiguous boundary conditions}, in the sense that the solution of the problem at each boundary point must satisfy one of two different possible boundary conditions, and it is not known a priori which point satisfies which condition. 

Whereas the original problem involved a system of equations, its scalar version gained further attention in the seventies due to its connection to many other areas, which then led to it being widely studied by the mathematical community.
In fact, in addition to elasticity, lower dimensional obstacle problems also appear in describing osmosis through semi-permeable membranes as well as boundary heat control (see, e.g., \cite{DL76}).
Moreover, they often are local formulations of fractional obstacle problems, another important class of obstacle problems.
Fractional obstacle problems can be found in the optimal stopping problem for L\'evy processes, and can be used to model American option prices (see \cite{Mer76,CT04}). 
They also appear in the study of anomalous diffusion, \cite{BG90}, the study of quasi-geostrophic flows, \cite{CV10}, and in studies of the interaction energy of probability measures under singular potentials, \cite{CDM16}. 
(We refer to \cite{Ros18} for an extensive bibliography on the applications of obstacle-type problems.) Finally, thin obstacle problems also appear when studying the classical obstacle problem, \cite{FS17, FRS19}.

The aim of this survey is to offer the interested reader a short, self-contained introduction to the thin obstacle problem and, in particular, to the structure of the free boundary, involving some recent results in this context. Other very interesting expository works in the field include \cite{Sal12, DaSa18}, together with the book \cite[Chapter 9]{PSU12}, which give in full detail the proofs of approach existence and optimal regularity of solutions, as well as a study of regular and singular free boundary points. 

We believe that the current work complements well the previous expository works \cite{PSU12, Sal12, DaSa18}. We present a different (and shorter) proof of the classification of free boundary points and optimal regularity based on the ideas of \cite{CRS17}-\cite{RS17}: traditionally, optimal regularity is shown by means of a monotonicity formula, and then, together with Almgren's monotonicity formula, the classification of free boundary points can be established. Here, we skip the first step and are able to perform the classification directly, without knowing the optimal regularity, and deduce a posteriori that the lowest possible homogeneity is $\frac32$. On the other hand, we also give an overview on more recent results for the thin obstacle problem (\cite{BFR18, FS18,  FR19, FJ18, CSV19, FRS19}).

\subsection{Structure of the survey} This work is organized as follows.

We start with a short motivation for the thin obstacle problem in Section~\ref{sec.2} coming from electrostatics. In Section~\ref{sec.3}  we introduce, without giving proofs, the thin obstacle problem and the basic regularity results for the solution. We refer the interested reader to \cite{Caf79, PSU12, Sal12, DaSa18} for the proofs of these results. 

We then move to Sections~\ref{sec.6} and \ref{sec.7} on the (first) classification of free boundary points and the study of regular points. In these sections we present some simple proofs which are different from \cite{AC04, ACS08, PSU12, Sal12, DaSa18}.

We then move to Section~\ref{sec.8} in which we study singular points. In order to keep the exposition simple, we assume the obstacle to be zero (alternatively, analytic). We introduce the singular set for the thin obstacle problem, we discuss a non-degenerate case in which regular and singular points form the whole of the free boundary, we introduce higher order regularity results for the singular set, and we briefly mention epiperimetric inequalities for the thin obstacle problem and their use in showing regularity of the singular set. Then, in Section~\ref{sec.9} we introduce the rest of the free boundary, known as the set of \emph{Other points}. We here present the known results for each of the components of this set, and give a self-contained proof of the smallness of one of them. 

Then, in Section~\ref{sec.cinf} we briefly discuss the case of $C^\infty$ obstacles and the presence, in this case, of a different type of free boundary point with infinite frequency; and in Section~\ref{sec.11} we introduce the generic free boundary regularity. We state the recent results in this direction, that show that even if the free boundary could have a bad structure for certain configurations, such situations are \emph{not common}. We finish with a short summary of the structure of the free boundary in Section~\ref{sec.12}.

\section{A problem from elastostatics}
\label{sec.2}
Consider an elastic body $\Omega\subset \R^3$, anisotropic and non-homogeneous, in an equilibrium configuration, that must remain on one side of a frictionless surface. Let us denote $\ub = (u^1, u^2, u^3) : \Omega \to \R^3$ the displacement vector of the elastic body, $\Omega$, constrained to be on one side of a surface $\Pi$ (in particular, the elastic body moves from the $\Omega$ configuration to $\Omega+\ub(\Omega)$). We divide the boundary into $\de\Omega = \Sigma_D\cup \Sigma_S$. The body is free (or clamped, $\ub \equiv 0$) at $\Sigma_D$, whereas $\Sigma_S$ represents the part of the boundary subject to the constraint, that is, $\Sigma_S = \partial \Omega\cap \Pi$. Alternatively, one can interpret $\Sigma_S$ itself as the frictionless surface that is constraining the body $\Omega$, understanding that only a subset of $\Sigma_S$ is actually exerting the constraint on the displacement. This will be more clear below. 

Let us assume small displacements, so that we can consider the linearized strain tensor 
\[
\eps_{ij}(\ub) = \frac12 (u^i_{x_j} + u^j_{x_i}),\quad 1\le i, j \le 3. 
\]
Considering an elastic potential energy of the form $W(\eps) = a_{ijkh}(x) \eps_{ij}\eps_{kh}$, for some functions $a_{ijhk}(x) \in C^\infty(\overline{\Omega})$ (where, from now on, we are using the Einstein notation of implicit summation over repeated indices), then the stress tensor has the form 
\[
\sigma_{ij}(\ub) = a_{ijhk}(x) \eps_{hk}(\ub).
\] 
We also impose that $a_{ijhk}$ are elliptic and with symmetry conditions 
\[
\begin{split}
& a_{ijhk}(x) \zeta_{ij}\zeta_{hk} \ge \lambda |\zeta|^2\quad \textrm{for all }\zeta\in \R^{n\times n}\textrm{ such that }\zeta_{ij} = \zeta_{ji},\\
& a_{ijhk}(x) = a_{jihk}(x) = a_{ijkh}(x),\quad\textrm{ for }x\in \Omega. 
\end{split}
\]
Let us also assume that $\Omega$ is subject to the body forces $\fb = (f^1, f^2, f^3)$, so that by the general equilibrium equations we have 
\[
\frac{\de\sigma_{ij}(\ub)}{\de x_j} = f^i,\quad \textrm{in }\Omega, \quad \textrm{for } i = 1, 2, 3.
\]
From the definitions of $\sigma(\ub)$ and $\eps_{ij}(\ub)$ above, this is a second order system, and from the definition of $a_{ijhk}$, it is elliptic. Thus, the displacement vector satisfies an elliptic second order linear system inside $\Omega$. We just need to impose boundary conditions on $\Sigma_S$ (the conditions on $\Sigma_D$ are given by the problem, we can think of $\ub \equiv 0$ there).  

Let us denote by $\nb$ the outward unit normal vector to $x\in \de\Omega$. Notice that, by assumption, the stresses in the normal direction $\nb$ on $\Sigma_S$, $\sigma_{ij}(\ub) \nb_i$, must be compressive in the normal direction, and zero in the tangential direction (due to the frictionless surface). That is, 
\begin{align}
\label{eq.sigmann}\sigma_{ij}(\ub)\nb_i\nb_j &\le 0 \quad\textrm{on $\Sigma_S$,} \\
\nonumber \sigma_{ij}(\ub)\nb_i\tau_j &=  0 \quad\textrm{on $\Sigma_S$ and for all $\tau\in \R^n$ with $\tau\cdot\nb = 0$. }
\end{align}
On the other hand, we have the kinematical contact condition, encoding the fact that there exists a surface exerting a constraint and the body cannot cross it (under small displacements, or assuming simply that $\Pi$ is a plane):
\begin{equation}
\label{eq.ub0}
\ub \cdot\nb \le 0,\quad\textrm{ on }\Sigma_S. 
\end{equation}

In fact, conditions \eqref{eq.sigmann}-\eqref{eq.ub0} are complimentary, in the sense that 
\begin{equation}
\label{eq.ub1}
(\ub \cdot\nb) \cdot (\sigma_{ij}(\ub)\nb_i\nb_j) = 0 \quad\textrm{ on $\Sigma_S$,}
\end{equation}
and we are dividing $\Sigma_S$ into two regions: those where the body separates from $\Pi$ and those where it remains touching $\Pi$.
That is, if there is an active normal stress at a point $x\in \Sigma_S$, $\sigma_{ij}(\ub(x))\nb_i(x)\nb_j(x) < 0$, then it means that the elastic body is being constrained by $\Sigma_S$ (or $\Pi$) at $x$, and thus we are in the contact area and there is no normal displacement, $\ub(x) \cdot\nb(x) = 0$. Alternatively, if there is a normal displacement, $\ub(x) \cdot\nb(x) < 0$, it means that there is no active obstacle and thus no normal stress, $\sigma_{ij}(\ub(x))\nb_i(x)\nb_j(x) = 0$. This is precisely what \emph{ambiguous boundary condition means}: 

For each $x\in \Sigma_S$ we have that one of the following two conditions holds
\begin{equation}
\label{eq.ambiguous}
\textrm{either}\quad \left\{\begin{array}{rcl}
\sigma_{ij}(\ub(x))\nb_i(x)\nb_j(x) &\le & 0\\
\ub(x) \cdot\nb(x) &= &0,
\end{array}
\right.
\quad \textrm{or}\quad \left\{\begin{array}{rcl}
\sigma_{ij}(\ub(x))\nb_i(x)\nb_j(x) &= & 0\\
\ub(x) \cdot\nb(x) &< &0,
\end{array}
\right.
\end{equation}
and a priori, we do not know which of the condition is being fulfilled at each point. The Signorini problem is a \emph{free boundary problem} because the set $\Sigma_S$ can be divided into two different sets according to which of the conditions \eqref{eq.ambiguous} holds, and these sets are, a priori, unknown. The boundary between both sets is what is known as the \emph{free boundary}. 

The previous is a strong formulation of the Signorini problem, which assumed a priori that all solutions and data are smooth. In order to prove existence and uniqueness, however, one requires the use of variational inequalities with (convex) constraints in the set of admissible functions. 

The first one to approach the existence and uniqueness from a variational point of view was Fichera in \cite{Fic64}. We also refer to the work of Lions and Stampacchia \cite{LS67}, where a general theory of variational inequalities was developed, which later led to the scalar version of the Signorini problem, and its interpretation as a minimization problem with admissible functions constrained to be above zero on certain fixed closed sets. Later, in \cite{DL76},  Duvaut and Lions studied the problem and its applications to mechanics and physics. 

Finally, we refer to \cite{Kin81, KO88} for more details into the strong and weak formulation of the vectorial Signorini problem and its properties, and to the more recent \cite{And16} for the optimal regularity of the solution and regularity of the free boundary.

\section{The thin obstacle problem}
\label{sec.3}

In this work we will focus our attention to the scalar version of the Signorini problem from elasticity: our function, $u$, would correspond to an appropriate limit in the normal components of the displacement vector, $\boldsymbol{u}_n$. Our obstacle, $\varphi$, adds generality to the problem, and would correspond to the possible displacement of the frictionless surface $\partial\Omega$ while performing $\boldsymbol{u}$. (We refer the interested reader to \cite[Example 1.5]{CDV19} for a deduction of this fact.) As explained above, this problem also appears in biology, physics, and even finance. Thus, from now on, functions are scalar.

Let us denote \[
x = (x', x_{n+1}) \in \R^n\times\R,\quad B_1 := \{x\in \R^{n+1} : |x|< 1\},\quad\text{and}\quad B_1^+ := B_1\cap \{x_{n+1} > 0\}.
\] We say that $u :\overline{B_1^+}\to \R$ is a solution to the Signorini problem or thin obstacle problem with  smooth obstacle $\varphi $ defined on $B_1' := B_1\cap \{x_{n+1} = 0\}$, and with smooth boundary data $g$ on $\de B_1 \cap \{x_{n+1} > 0\}$, if $u$ solves
\begin{equation}
\label{eq.thinobst_intro}
  \left\{ \begin{array}{rcll}
  \Delta u&=&0 & \textrm{ in } B_1^+\\
    u&=&g & \textrm{ on } \de B_1 \cap \{x_{n+1} > 0\}\\
  \de_{x_{n+1}} u\cdot( u-\varphi) & = &0 & \textrm{ on } B_1\cap \{x_{{n+1}} = 0\}\\
    -\de_{x_{n+1}} u & \ge &0 & \textrm{ on } B_1\cap \{x_{{n+1}} = 0\}\\
       u-\varphi & \ge &0 & \textrm{ on } B_1\cap \{x_{{n+1}} = 0\},
  \end{array}\right.
\end{equation}
where we are also assuming that the compatibility condition $g\ge \varphi$ on $\de B_1\cap \{x_{n+1} = 0\}$ holds. Notice the analogy with the ambiguous compatibility conditions \eqref{eq.sigmann}-\eqref{eq.ub0}-\eqref{eq.ub1} or \eqref{eq.ambiguous}: the set with Dirichlet conditions, $\Sigma_D$ above, is $\de B_1\cap \{x_{n+1}> 0\}$, where $u = g$ is imposed; whereas the set with ambiguous boundary conditions, $\Sigma_S$ above, is now $B_1'$. That is, at each point $x = (x',0 )\in B_1'$ we have that 
\[
\textrm{either}\quad \left\{\begin{array}{rcl}
-\de_{x_{n+1}} u(x)&\ge & 0\\
u(x)-\varphi(x')  &= &0,
\end{array}
\right.
\quad \textrm{or}\quad \left\{\begin{array}{rcl}
-\de_{x_{n+1}} u(x)&= & 0\\
u(x)-\varphi(x')  &> &0.
\end{array}
\right.
\]

An alternative way to write the ambiguous boundary conditions in \eqref{eq.thinobst_intro} is by imposing a nonlinear condition on $B_1'$ involving $u$ and $\de_{x_{n+1}}u $ as 
\begin{equation}
\label{eq.thinobst_intro_2}
  \left\{ \begin{array}{rcll}
  \Delta u&=&0 & \textrm{ in } B_1^+\\
  \min\{-\de_{x_{n+1}} u,  u-\varphi\} & = &0 & \textrm{ on } B_1\cap \{x_{{n+1}} = 0\},
  \end{array}\right.
\end{equation}
with $u = g$ on $\de B_1 \cap \{x_{n+1} > 0\}$. This is the strong formulation of the Signorini problem. 

In order to prove existence (and uniqueness) of solutions, we need to study the weak formulation of the problem: a priori, we do not know any regularity for the solution. 

Consider a bounded domain $\Omega\subset \R^n$, and a closed set $\mathcal{C}\subset \Omega$. Let, also, $\phi: C(\mathcal{C})\to \R$ be a continuous function. In \cite{LS67}, Lions and Stampacchia prove the existence and uniqueness of a solution to the variational problem
\begin{equation}
\label{eq.min}
\min_{v\in \mathcal{K}}\int_\Omega |\nabla v|^2
\end{equation}
where $\mathcal{K} = \{v\in H_0^1(\Omega) : v\ge \phi\textrm{ on }\mathcal{C}\}$. Moreover, they also show that such solution is the smallest supersolution. 

If $\mathcal{C} = \overline{\Omega}$, \eqref{eq.min} is also known as the (classical) obstacle problem: finding the function with smallest Dirichlet energy among all those which lie above a fixed obstacle $\phi$. This problem has been thoroughly studied in the last fifty years (see 	\cite{LS67, KN77, Caf77, CR77, Caf98, Wei99, PSU12} and references therein), and we will sometimes refer to it also as the \emph{thick} or \emph{classical} obstacle problem.

Our problem, \eqref{eq.thinobst_intro_2}, corresponds to the case when $\mathcal{C}$ is lower dimensional, with codimension 1. Notice that simple capacity arguments yield that, if $\mathcal{C}$ has codimension 2 or higher, then the restriction of functions in $H^1_0$ to $\mathcal{C}$ does not have any effect on the minimization of the Dirichlet energy, and thus we would simply be solving the classical Laplace equation. This means that, in higher codimension, there is in general no minimizer. 

Thus, \eqref{eq.thinobst_intro_2} are the Euler--Lagrange equations of the following variational problem
\begin{equation}
\label{eq.min1}
\min_{v\in  \mathcal{K}^*}\int_{B_1^+} |\nabla v |^2,
\end{equation}
where 
\[
\mathcal{K}^* = \{v\in H^1(B_1^+) : v = g \textrm{ on }\de B_1\cap \{x_{n+1}  > 0\}, v \ge \varphi\textrm{ on }B_1\cap \{x_{n+1}  = 0\}\}.
\]
Notice that the expressions $v = g$ on $\de B_1\cap \{x_{n+1}  > 0\}$ and $v\ge \varphi$ on $B_1\cap \{x_{n+1}  = 0\}$ must be understood in the trace sense. 
 The existence and uniqueness of a solution, as in \cite{LS67}, follows by classical methods: take a minimizing sequence, and by lower semicontinuity of the Dirichlet energy, and the compactness of the trace embeddings into $H^1$, the limit is also an admissible function. The uniqueness follows by strict convexity of the functional. 
 
In some cases, the thin obstacle problem is posed in the whole ball $B_1$, and thus we consider  
 \begin{equation}
\label{eq.min2}
\min_{v\in  \mathcal{K}^{**}}\int_{B_1} |\nabla v |^2,\quad \mathcal{K}^{**} = \{v\in H^1(B_1) : v = g \textrm{ on }\de B_1,v \ge \varphi\textrm{ on }B_1\cap \{x_{n+1}  = 0\}\},
\end{equation}
for some function $g\in C(\de B_1)$. In this case, the Euler--Lagrange equations are formally
\begin{equation}
\label{eq.thinobst_intro_3}
  \left\{ \begin{array}{rcll}
  u & \ge & \varphi & \textrm{ on } B_1\cap \{x_{{n+1}} = 0\}\\
  \Delta u&=&0 & \textrm{ in } B_1\setminus\left(\{x_{n+1} = 0\}\cap \{u = \varphi\}\right)\\
  \Delta u & \le  &0 & \textrm{ in } B_1,
  \end{array}\right.
\end{equation}
with the added condition that $u = g$ on $\de B_1$. Alternatively, making the parallelism with \eqref{eq.thinobst_intro_2}, one could formally write 
\begin{equation}
  \left\{ \begin{array}{rcll}
  \Delta u&=&0 & \textrm{ in } B_1\setminus\{x_{n+1} = 0\}\\
  \min\{-\Delta u,  u-\varphi\} & = &0 & \textrm{ on } B_1\cap \{x_{{n+1}} = 0\},
  \end{array}\right.
\end{equation}
understanding that $\Delta u$ is defined only in the distributional sense. 

\begin{rem}
\label{rem.geven}
Notice that if $g$ is even with respect to $x_{n+1}$, the solution to \eqref{eq.thinobst_intro_3} is even as well, and we  recover a problem of the form \eqref{eq.thinobst_intro_2}. On the other hand, for general $g$, one can study the symmetrised function $\bar u(x', x_{n+1}) = \frac12\left( u(x', x_{n+1})+  u(x', -x_{n+1})\right)$, which has the same regularity and contact set as $u$. Thus, in order to study \eqref{eq.thinobst_intro_3} one can always assume that $u$ is even in $x_{n+1}$, and this  is enough to study \eqref{eq.thinobst_intro_2}. 
\end{rem}

Notice, also, that in \eqref{eq.thinobst_intro_3} the condition $\Delta u \le 0$ needs to be understood in the sense of distributions. In fact, $\Delta u$ is a (non-positive) measure concentrated on $\{u = 0\}$. We can explicitly compute it by taking any test function $\phi \in C^\infty_c(B_1)$ even in $x_{n+1}$,
\begin{align*}
- \langle \Delta u, \phi\rangle & = 2\int_{B_1^+} \nabla u\cdot\nabla \phi = 2\lim_{\eps\downarrow 0}\int_{B_1^+\cap\{x_{n+1} \ge \eps\}} \nabla u \cdot\nabla\phi\\
& = -2\lim_{\eps\downarrow 0}\int_{B_1^+\cap\{x_{n+1} = \eps\}} \de_{x_{n+1}}u \, \phi = -2\int_{B_1\cap\{x_{n+1} = 0\}}\de_{x_{n+1}}^+ u \, \phi.
\end{align*}

That is, 
\begin{equation}
\label{eq.delta_xn}
\Delta u = 2\de_{x_{n+1}}^+ u\, \mathcal{H}^n\mres \left(B_1\cap\{x_{n+1} = 0\}\right),
\end{equation}
where $\de_{x_{n+1}}^+ u = \lim_{\eps\downarrow 0} \de_{x_{n+1}} u(x', \eps)$. 

\begin{rem}
In the derivation of \eqref{eq.delta_xn}, apart from \eqref{eq.thinobst_intro_3}, we have also used integrability of $\nabla u$, and that the trace of the normal derivative is well-defined. This follows because, in fact, as we will show later, the solution to the thin obstacle problem is Lipschitz, and is continuously differentiable up to the obstacle.  
\end{rem}

\begin{rem}
Problem \eqref{eq.min2} can be seen as a first order approximation of the Plateau problem with a lower dimensional obstacle, originally introduced by De Giorgi \cite{DGthin}, which has also been studied in the last years \cite{Deacu, FS, FeSe18}. Indeed, the Dirichlet functional corresponds to the area functional (up to a constant) for flat graphs.
\end{rem}

Finally, let us end this section by mentioning other possible constructions of solutions. As mentioned above, the solution to the previous minimization problem can also be recovered as the least supersolution. That is, the minimizer $u$ to \eqref{eq.min2} equals to the pointwise infimum
\[
\begin{split}
u(x) = \inf\big\{ v(x) : v\in C^2(B_1), -\Delta v \ge 0\textrm{ in }B_1, v \ge \varphi \textrm{ on }B_1 \cap & \{x_{n+1} = 0\},  v \ge g \textrm{ on }\de B_1\big\},
\end{split}
\]
the least supersolution above the thin obstacle. The fact that such function satisfies \eqref{eq.thinobst_intro_3} can be proved by means of Perron's method, analogously to the Laplace equation. 

As a final characterization of the construction of the solution, we refer to penalization arguments. In this case there are two ways to penalize:

On the one hand, we can \emph{expand} the obstacle, and work with the classical obstacle problem. That is, we can consider as obstacle $\varphi_\eps(x) = \varphi(x') - \eps^{-1}x_{n+1}^2$ with $\eps > 0$ very small, which is now defined in the whole domain $B_1$. Then, the solutions to the thick obstacle problem with increasingly thinner obstacles $\varphi_\eps$ (letting $\eps\downarrow 0$), converging to our thin obstacle, will converge to the solution to our problem. Alternatively, we can even avoid the penalization step: the solutions to the thin obstacle problem must coincide with the solution of the thick obstacle problem, with obstacle $\bar\varphi: B_1^+\to \R$ given by the solution to $\Delta \bar\varphi = 0$ in $B_1^+$, $\bar\varphi = \varphi$ on $B_1\cap \{x_{n+1} = 0\}$, $\bar\varphi = g$ on $\de B_1\cap \{x_{n+1}> 0\}$ (which can have a hard wedge  on $\{x_{n+1} = 0\}$). Notice that $\bar\varphi$ itself is not the solution to the thin obstacle problem since, a priori, it is not a supersolution across $\{x_{n+1} = 0\}$. 

On the other hand, we can penalize \eqref{eq.thinobst_intro_2} by replacing the ambiguous boundary condition on $\{x_{n+1} = 0\}$, by considering solutions $u^\eps$ with the Neumann boundary condition $u^\eps_{x_{n+1}}  = \eps^{-1}\min\{0, u-\varphi\}$ on $\{x_{n+1} = 0\}$. By letting $\eps\downarrow 0$, $u^\eps$ converges to a solution to our problem. 

\subsection{Relation with the fractional obstacle problem}

Let us consider the thin obstacle problem \eqref{eq.thinobst_intro_2} posed in the whole $\R^{n+1}$, for some smooth obstacle $\varphi:\R^n\to\R$ with compact support. That is,  we denote $\R_+^{n+1} = \R^{n+1}\cap\{x_{n+1} > 0\}$ and consider a solution to 
\begin{equation}
\label{eq.top_wd}
  \left\{ \begin{array}{rcll}
  \Delta u&=&0 & \textrm{ in } \R^{n+1}_+\\
   u(x',0) & \ge &\varphi(x') & \textrm{ for } x'\in\R^n\\
   \de_{x_{n+1}} u(x',0) & = &0 & \textrm{ if } u(x', 0) > \varphi(x')\\
   \de_{x_{n+1}} u(x',0) & \le &0 & \textrm{ if } u(x', 0) = \varphi(x')\\
   u(x)&\to&0 & \textrm{ as } |x|\to\infty.
  \end{array}\right.
\end{equation}

If we denote by $\bar u: \R^n\to\R$ the restriction of $u$ to $\{x_{n+1} = 0\}$, then we can simply reformulate the problem in terms of $\bar u$ instead of $u$, given that $u$ is just the harmonic extension (vanishing at infinity) of $\bar u$ to $\R^{n+1}_+$. That is, by means of the Poisson kernel in the half-space, 
\[
u(x', x_{n+1}) = \left[P(x_{n+1}, \cdot)*u\right](x')= c_n\int_{\R^n} \frac{x_{n+1} \bar u(y')\,dy'}{(x_{n+1}^2+|x'-y'|^2)^{\frac{n+1}{2}}}
\]
for some dimensional constant $c_n$. Thus, after a careful computation and taking limits $x_{n+1}\downarrow 0$, one obtains
\[
-\de_{x_{n+1}} u(x', 0) = c_n{\rm PV}\int_{\R^n}\frac{\bar u(x') - \bar u(y')}{|x'-y'|^{n+1}}\,dy' =: (-\Delta)^{\frac12}\bar u (x'),
\]
where the integral needs to be understood in the principal value sense. We have introduced here an integro-differential operator, acting on $\bar u$, $(-\Delta)^{\frac12}$, known as the fractional Laplacian of order 1 (in the sense that it is $1$-homogeneous, $(-\Delta)^{\frac12}(\bar v(r\cdot)) = r((-\Delta)^{\frac12}\bar v)(r\cdot)$).

Let us very briefly justify the choice of notation $(-\Delta)^{\frac12}$ in terms of the discussion above. Given a smooth (say, $C^2$) function $\bar u$, $(-\Delta)^{\frac12}\bar u$ is the normal derivative of its harmonic extension. If one repeats this procedure, and takes the harmonic extension of $(-\Delta)^{\frac12}\bar u$, it is simply $\de_{x_{n+1}} u$. Thus, $(-\Delta)^{\frac12}(-\Delta)^{\frac12} \bar u = \de^2_{x_{n+1}} u = -\Delta_{x'}\bar u$, where we are using the fact that $\Delta u = 0$ (up to the boundary), and we denote $\Delta = \Delta_x' + \de^2_{x_{n+1}}$. 

In all, problem \eqref{eq.top_wd} can be rewritten in terms of $\bar u$ as 
\begin{equation}
\label{eq.top_f12}
  \left\{ \begin{array}{rcll}
   \bar u & \ge &\varphi & \textrm{ in } \R^n\\
   (-\Delta)^{\frac12} \bar u & = &0 & \textrm{ if } u > \varphi\\
   (-\Delta)^{\frac12} \bar u & \ge &0 & \textrm{ if } u = \varphi\\
   \bar u(x')&\to&0 & \textrm{ as } |x'|\to\infty,
  \end{array}\right.
\end{equation}
which is the formulation of the classical (or thick) global obstacle problem, with obstacle $\varphi$ and operator $(-\Delta)^{\frac12}$, also referred to as \emph{fractional obstacle problem}. Notice that now we are considering a function $\bar u$ that remains above the obstacle $\varphi$ in the whole domain (compared to before, where we only needed this condition imposed on a lower dimensional manifold). 

Similarly, one can consider the fractional obstacle problem in a bounded domain $\Omega\subset\R^n$ with a (smooth) obstacle $\varphi:\Omega\to\R$ by imposing exterior boundary conditions with sufficient decay, $\bar g: \R^n\setminus\Omega\to \R$,
\begin{equation}
\label{eq.top_f12_Om}
  \left\{ \begin{array}{rcll}
   \bar u & \ge &\varphi & \textrm{ in } \Omega\\
   (-\Delta)^{\frac12} \bar u & = &0 & \textrm{ in } \Omega\cap\{u > \varphi\}\\
   (-\Delta)^{\frac12} \bar u & \ge &0 & \textrm{ in } \Omega\cap\{u = \varphi\}\\
   \bar u&= &\bar g & \textrm{ in } \R^n\setminus\Omega.
  \end{array}\right.
\end{equation}
Thus, in order to study the solution to \eqref{eq.top_f12_Om}, by taking its harmonic extension $\bar u$, it is enough to study the solutions to \eqref{eq.thinobst_intro_2}.

Finally, another characterization of the fractional Laplacian, $(-\Delta)^{\frac12}$, is via Fourier transforms. In this way, one can also characterize (up to a constant) general fractional Laplacians of order $2s$, with $0 < s < 1$, as 
\[
\mathcal{F}(\fls\bar u) (\xi) = |\xi|^{2s}\mathcal{F}(\bar u)(\xi),
\]
where $\mathcal{F}$ denotes the Fourier transform. The operator, which now has order $2s$, can be explicitly written as 
\[
\fls \bar u (x') = c_{n,s}{\rm PV}\int_{\R^n}\frac{\bar u(x') - \bar u(y')}{|x'-y'|^{n+2s}}\, dy'.
\]

In this way, one can consider general obstacle problems with nonlocal operator $\mathcal{L} = \fls$
\begin{equation}
\label{eq.top_L_Om}
  \left\{ \begin{array}{rcll}
   \bar u & \ge &\varphi & \textrm{ in } \Omega\\
   \mathcal{L}\bar u & = &0 & \textrm{ in } \Omega\cap\{u > \varphi\}\\
   \mathcal{L} \bar u & \ge &0 & \textrm{ in } \Omega\cap\{u = \varphi\}\\
   \bar u&= &\bar g & \textrm{ in } \R^n\setminus\Omega.
  \end{array}\right.
\end{equation}
(See, e.g., \cite{Sil07}.) As we have seen, the fractional Laplacian $(-\Delta)^{\frac12}$ can be recovered as the normal derivative of the harmonic extension towards one extra dimension (cf. \eqref{eq.top_f12_Om}-\eqref{eq.thinobst_intro_2}). Caffarelli and Silvestre showed in \cite{CS07} that the fractional Laplacian of order $\fls$ can also be recovered by extending through suitable operators. Thus, if one considers the operator 
\[
L_a u := {\rm div}(|x_{n+1}|^a\nabla u),\qquad a = 1-2s \in (-1,1),
\] 
then the even $a$-harmonic extension of the solution $\bar u$ to \eqref{eq.top_L_Om} (that is, $u$ with $L_a u = 0$ in $x_{n+1} > 0$ and $u(x', x_{n+1}) = u(x', -x_{n+1})$) solves locally a problem of the type 
\begin{equation}
\label{eq.thinobst_La}
  \left\{ \begin{array}{rcll}
  u & \ge & \varphi & \textrm{ on } B_1\cap \{x_{{n+1}} = 0\}\\
  L_a u&=&0 & \textrm{ in } B_1\setminus\left(\{x_{n+1} = 0\}\cap \{u = \varphi\}\right)\\
  L_a u & \le  &0 & \textrm{ in } B_1,
  \end{array}\right.
\end{equation}
that is, a thin obstacle problem with operator $L_a$, or a weighted thin obstacle problem (cf. \eqref{eq.thinobst_intro_3}) with $A_2$-Muckenhoupt weight. 

It is for this reason that many times one studies the weighted thin obstacle problem \eqref{eq.thinobst_La} with $a\in(-1,1)$ (see \cite{CS07, CSS08}). For the sake of simplicity and readability, in this introduction we will always assume $a = 0$, but most of the results mentioned generalize to any $a\in (-1, 1)$ accordingly, and therefore, they also apply to solutions to the fractional obstacle problem \eqref{eq.top_L_Om}.

Fractional obstacle problems such as \eqref{eq.top_L_Om}, as well as many of its variants (with more general non-local operators, with a drift term, in the parabolic case, etc.), have been a very prolific topic of research in the last years (see \cite{CF11, PP15, GP16, DGPT17, CRS17, BFR18b, FR18} and references therein). We refer the reader to the expository works \cite{PSU12, Sal12, DaSa18} for a deeper understanding of the fractional obstacle problem and its relation to the thin obstacle problem.

\subsubsection{The fractional Laplacian and L\'evy processes}

Integro-differential equations arise naturally in the study of stochastic processes with jumps, namely, L\'evy processes. The research in this area is attracting an increasing level of interest, both from an analytical and probabilistic point of view, among others, due to its applications to multiple areas: finance, population dynamics, physical and biological models, etc. (See \cite{DL76, Mer76, CT04, Ros16, Ros18} and references therein.) Infinitesimal generators\footnote{The infinitessimal generator $\mathcal{A}$ of a stochastic process $X = \{X_t : t\ge 0\}$, with $X_t\in \R^n$, is defined to act on suitable functions $f:\R^n\to \R$ as 
\[
\mathcal{A}(f(x)) := \lim_{t\downarrow 0} \frac{\mathbb{E}^x(f(X_t)) - f(x)}{t}.
\]} of L\'evy processes are integro-differential operators of the form
\begin{equation}
\label{eq.L}
\mathcal{L}u = b\cdot \nabla u +{\rm tr}\, (A\cdot D^2 u) +\int_{\R^n} \left\{u(x+y) -u(x) -y\cdot \nabla u(x)\chi_{B_1}(y)\right\}\nu(dy),
\end{equation}
for some L\'evy measure $\nu$ such that $\int \min\{1, |y|^2\}\nu(dy) < \infty$. The simplest (non-trivial) example of such infinitesimal generators is the fractional Laplacian introduced above, which arises as infinitesimal generator of a stable and radially symmetric L\'evy process. 

In particular, obstacle type problems involving general integro-differential operators of the form \eqref{eq.L} appear when studying the optimal stopping problem for a L\'evy process: consider a particle located at $X_t$ at time $t\ge 0$, moving according a L\'evy process inside a domain $\Omega$, and let $\varphi$ be a pay-off function defined in $\Omega$, and $\bar g$ an exterior condition defined in $\R^n\setminus \Omega$. At each time we can decide to stop the process and be paid $\varphi(X_t)$ or wait until the particle reaches a region where $\varphi$ has a higher value. Moreover, if the particle suddenly jumps outside of $\Omega$, we get paid $\bar g(X_t)$. The goal is to maximize the expected value of money we are being paid. We refer the interested reader to the aforementioned references as well as \cite{Pha97} and the appendix of \cite{BFR18} for the jump-diffusion optimal stopping problem, as well as \cite{LS09, Eva12, FR20} for the local (Brownian motion) case. 

\subsection{Regularity of the solution}

Once existence and uniqueness is established for solutions to \eqref{eq.thinobst_intro_2}, the next question that one wants to answer is: 
\[
\textrm{How regular is the solution $u$ to \eqref{eq.thinobst_intro_2}?}
\]

Regularity questions for solutions to the thin obstacle problem were first investigated by Lewy in \cite{Lew68}, where he showed, for the case $n = 1$, the continuity of the solution of the Signorini problem. He also gave the first proof related to the structure of the free boundary, by showing, also in $n = 1$, that if the obstacle $\varphi$ is concave, the coincidence set $\{u = \varphi\}$ consists of at most one connected interval. 

The continuity of the solution for any dimension follows from classical arguments. One first shows that the coincidence set $\{u = \varphi\}$ is closed, and then one uses the following fact for harmonic functions: if $\mathcal{C}\subset \Omega$ is closed, and $\Delta v = 0$ in $\Omega\setminus\mathcal{C}$ and $v$ is continuous on $\mathcal{C}$, then $v$ is continuous in $\Omega$. 

Rather simple arguments also yield that, in fact, the solution is Lipschitz. Indeed, if one considers the solution $u$ to the problem \eqref{eq.thinobst_intro_3}, and we define $h\in {\rm Lip}(B_1)$ as the solution to 
\[
  \left\{ \begin{array}{rcll}
  \Delta h&=&0 & \textrm{ in } B_1\setminus\{x_{n+1} = 0\}\\
    h&=&-\|u\|_{L^\infty(B_1)} & \textrm{ on } \de B_1 \\
  h & = &\varphi & \textrm{ on } B_1\cap \{x_{{n+1}} = 0\},
  \end{array}\right.
\]
then $u$ is a solution to the classical (thick) obstacle problem with $h$ (which is Lipschitz) as the obstacle. Then, we just notice that solutions to the thick obstacle problem with Lipschitz obstacles are Lipschitz, so $u$ is Lipschitz as well. This last step is not so immediate, we refer the reader to \cite[Theorem 1]{AC04} or \cite[Proposition~2.1]{Fer16} for two different ways to conclude this reasoning. These first regularity properties were investigated in the early 1970's (see \cite{Bei69, LS69, Kin71, BC72, GM75}).

In general, we do not expect solutions to \eqref{eq.thinobst_intro_3} to be better than Lipschitz in the full ball $B_1$. Indeed, across $\{x_{n+1} = 0\}$ on contact points, we have that normal derivatives can change sign, as seen by taking the even extension to \eqref{eq.thinobst_intro_2}. Nonetheless, we are interested in the regularity of the solution in either side of the obstacle. The fact that normal derivatives jump is \emph{artificial}, in the sense that it does not come from the equations, but from the geometry of the problem. We see that this is not observed in \eqref{eq.thinobst_intro_2}, where the solution could, a priori, be better than Lipschitz, and it also does not appear when studying the solution restricted to $\{x_{n+1} = 0\}$, as in the situations with the fractional obstacle problem \eqref{eq.top_f12_Om}. 

\subsubsection{$C^{1,\alpha}$ regularity}

The first step to upgrade the regularity of solutions to \eqref{eq.thinobst_intro_2} was taken by Frehse in \cite{Fre77}, where he proved that first derivatives of $u$ are continuous up to $\{x_{n+1} = 0\}$ on either side, thus showing that the solution is $C^1$ in $B_1^+$, up to the boundary.

Later, in 1978 Richardson proved that solutions are $C^{1,1/2}$ for $n = 1$ in \cite{Ric78}; whereas, in parallel, Caffarelli showed in \cite{Caf79} that solutions to the Signorini problem are $C^{1,\alpha}$ for some small $\alpha > 0$ up to the boundary on either side (alternatively, tangential derivatives are H\"older continuous).  In order to do that, Caffarelli showed the semiconvexity of the solution in the directions parallel to the thin obstacle. We state this result here for future reference, as well as the $C^{1,\alpha}$ regularity, and we refer the interested reader to \cite{Caf79, PSU12, Sal12, DaSa18} for the proofs of these results.
\begin{prop}[\cite{Caf79}]
\label{prop.semiconv}
Let $u$ be any weak solution to \eqref{eq.thinobst_intro_2}, and let $\varphi\in C^{1, 1}(B_1')$. Let $\be \in \mathbb{S}^{n}$ be parallel to the thin space, $\be \cdot \be_{n+1} = 0$. Then, $u$ is semiconvex in the $\be $ direction. That is, 
\[
\inf_{B_{1/2}} \de^2_{\be\be} u \ge -C (\|u\|_{L^2(B_1)}+[\nabla \varphi]_{C^{0,1}(B_1')}), 
\]
for some constant $C$ depending only on $n$. 
\end{prop}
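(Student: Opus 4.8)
The plan is to argue by penalization, reducing the estimate to a maximum principle for the second tangential derivative. First, since the bound only involves $[\nabla\varphi]_{C^{0,1}}=\|D^2\varphi\|_{L^\infty(B_1')}$, I would assume $\varphi$ smooth — approximate $\varphi$ in $C^{1,1}(B_1')$ by smooth obstacles with no loss in the constant and pass to the limit at the end. For smooth $\varphi$, replace the ambiguous boundary condition by a Neumann penalization: let $u^\eps$ solve $\Delta u^\eps=0$ in $B_1^+$, $\partial_{x_{n+1}}u^\eps=\beta_\eps(u^\eps-\varphi)$ on $B_1'$, and $u^\eps=g$ on $\partial B_1\cap\{x_{n+1}>0\}$, where $\beta_\eps\in C^\infty(\R)$ satisfies $\beta_\eps\equiv0$ on $[0,\infty)$, $\beta_\eps'\ge0$, $\beta_\eps(t)\to-\infty$ as $\eps\downarrow0$ for $t<0$, and, crucially, $\beta_\eps$ is \emph{concave} ($\beta_\eps''\le0$); a mollification of $t\mapsto\eps^{-1}\min\{t,0\}$ has these properties. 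By standard arguments $u^\eps$ exists, is smooth up to $B_1'$, and $u^\eps\to u$ locally uniformly as $\eps\downarrow0$.

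The key point is that $\be$ is parallel to the thin space, so translations in the $\be$ direction preserve $B_1^+$, the equation, and the structure of the boundary condition; hence one may differentiate the problem twice in $\be$. Writing $w:=\partial^2_{\be\be}u^\eps$, one obtains $\Delta w=0$ in $B_1^+$ and, on $B_1'$,
\[
\partial_{x_{n+1}}w=\beta_\eps'(u^\eps-\varphi)\,(w-\partial^2_{\be\be}\varphi)+\beta_\eps''(u^\eps-\varphi)\,(\partial_\be u^\eps-\partial_\be\varphi)^2\ \le\ \beta_\eps'(u^\eps-\varphi)\,(w-\partial^2_{\be\be}\varphi),
\]
where the inequality uses $\beta_\eps''\le0$ to discard the badly-signed quadratic term — this is the one place where the precise shape of the penalizer is used.

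Now run the minimum principle for $w$ on $B_{3/4}^+$. Let $m:=\inf_{\overline{B_{3/4}^+}}w$; one may assume $m<-\|D^2\varphi\|_{L^\infty(B_1')}$, otherwise the proof is finished. Since $w$ is harmonic, $m$ is not attained in $B_{3/4}^+$ (unless $w$ is constant). If it were attained at some $x_0\in B_{3/4}'$, then $w(x_0)-\partial^2_{\be\be}\varphi(x_0)<0$, so the displayed inequality and $\beta_\eps'\ge0$ give $-\partial_{x_{n+1}}w(x_0)\ge0$, i.e. the outer normal derivative of $w$ at $x_0$ is nonnegative; but Hopf's lemma (for the harmonic, nonconstant $w$ attaining a minimum at the boundary point $x_0$) forces it to be strictly negative — a contradiction. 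Hence $m=\inf_{\partial B_{3/4}\cap\{x_{n+1}\ge0\}}w$, and it remains only to bound $w=\partial^2_{\be\be}u^\eps$ on that spherical cap, uniformly in $\eps$, by $C(\|u\|_{L^2(B_1)}+\|D^2\varphi\|_{L^\infty})$. For this I would first record uniform bounds $\|u^\eps\|_{L^\infty(B_{7/8}^+)}+\|\nabla u^\eps\|_{L^\infty(B_{7/8}^+)}\le C(\|u\|_{L^2(B_1)}+\|D^2\varphi\|_{L^\infty})$ — the $L^\infty$ bound by local boundedness (or from $u^\eps\to u$), the gradient (Lipschitz) bound from an entirely analogous maximum principle applied to the first derivatives $\partial_\be u^\eps$ and $\partial_{x_{n+1}}u^\eps$ — and then bound the second tangential derivative on the cap: away from $\{x_{n+1}=0\}$ by interior estimates for harmonic functions, and near the equator $\{|x'|=3/4,\,x_{n+1}=0\}$ by a boundary (Schauder-type) estimate for the flat Neumann problem, whose data $\beta_\eps(u^\eps-\varphi)=\partial_{x_{n+1}}u^\eps$ is already controlled by the gradient bound. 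Letting $\eps\downarrow0$, the lower bound on $\partial^2_{\be\be}u^\eps$ passes to the limit (it expresses convexity of $u^\eps+\frac{C}{2}|x|^2$ in the $\be$ direction), giving the estimate for $u$ in $B_{1/2}$; finally one removes the smoothness assumption on $\varphi$ by the approximation mentioned at the start.

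The algebraic heart of the argument — differentiating twice, exploiting $\beta_\eps''\le0$, and invoking Hopf's lemma — is short. The main obstacle is the last step: the uniform-in-$\eps$ control of the second tangential derivative on the spherical cap with a constant depending only on $\|u\|_{L^2(B_1)}$ and $\|D^2\varphi\|_{L^\infty}$, which amounts to packaging the Lipschitz estimate and the boundary regularity of the penalized solutions, together with the (routine but not entirely trivial) verification that the penalization is well posed and converges.
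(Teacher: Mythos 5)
The paper itself does not prove Proposition~\ref{prop.semiconv}: it states it and refers to \cite{Caf79, PSU12, Sal12, DaSa18}. Your overall mechanism is the classical one behind those proofs: penalize with a smooth, nondecreasing, \emph{concave} $\beta_\eps$, use that tangential translations preserve the problem to differentiate the boundary condition twice in $\be$, discard the badly-signed $\beta_\eps''$ term by concavity, and rule out a minimum of $\de^2_{\be\be}u^\eps$ at an interior point of the thin ball by combining $\beta_\eps'\ge 0$ with Hopf's lemma. That part of your argument is correct and is indeed the heart of Caffarelli's proof.

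The genuine gap is the lateral boundary. You reduce everything to a bound, uniform in $\eps$, for $\inf \de^2_{\be\be}u^\eps$ on the cap $\de B_{3/4}\cap\{x_{n+1}\ge 0\}$, and near the equator you propose to get it from ``Schauder-type estimates for the flat Neumann problem'' using only that the Neumann data $\beta_\eps(u^\eps-\varphi)=\de_{x_{n+1}}u^\eps$ is bounded. This cannot work: second derivatives up to $\{x_{n+1}=0\}$ require the Neumann data in $C^{1,\alpha}$ (with merely bounded data one does not even get a bounded gradient in general), and $\nabla_{x'}\big[\beta_\eps(u^\eps-\varphi)\big]=\beta_\eps'(u^\eps-\varphi)\,(\nabla_{x'}u^\eps-\nabla\varphi)$ is not uniformly bounded as $\eps\downarrow 0$ --- equivalently, bounding it is the same as bounding the mixed second derivatives you are trying to control, so the step is circular. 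Worse, a \emph{two-sided} bound on $\de^2_{\be\be}u^\eps$ near the thin space, uniform in $\eps$, is simply false in general: the limit $u$ is only $C^{1,1/2}$, its second tangential derivatives blow up at free boundary points, and nothing prevents free boundary points from lying arbitrarily close to the equator of $B_{3/4}$. The only true statement there is the one-sided lower bound, i.e.\ an instance of the very semiconvexity estimate being proved, just centered near $|x'|=3/4$; so as written the proof assumes (a translate of) its conclusion. The same localization issue, in milder form, affects the uniform Lipschitz bound you invoke for $u^\eps$, which needs its own argument (e.g.\ comparison with a thick obstacle problem, as recalled in Section~\ref{sec.3}). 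To close the gap one must handle the lateral boundary differently --- for instance by running the minimum principle on an auxiliary function containing a cutoff (a Bernstein-type localization) or by localizing the penalized problem so that the data on the lateral boundary is under control, as is done in \cite{Caf79, PSU12, DaSa18}.
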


As a (not immediate) consequence, Caffarelli deduced the $C^{1,\alpha}$ regularity of solutions.
\begin{thm}[\cite{Caf79}]
\label{thm.caf79}
Let $u$ be any weak solution to \eqref{eq.thinobst_intro_2}, and let $\varphi\in C^{1, 1}(B_1')$. Then, $u\in C^{1,\alpha}(\overline{B_{1/2}^+})$ and
\[
\|u\|_{C^{1,\alpha}(\overline{B_{1/2}^+})} \le C \left(\|u\|_{L^2(B_1^+)} + [\nabla \varphi]_{C^{0,1}(B_1')}\right),
\] 
for some constants $\alpha > 0$ and $C$ depending only on $n$. 
\end{thm}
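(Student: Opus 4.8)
The plan is to reconstruct Caffarelli's argument from \cite{Caf79} (see also \cite{PSU12}), taking Proposition~\ref{prop.semiconv} as the main input. First, some reductions. Interior elliptic estimates already give smoothness of $u$ in $B_{1/2}^+$ away from $\{x_{n+1}=0\}$, so by a covering argument it suffices to control $\nabla u$ near points of $B_{1/2}'$. Subtracting from $u$ a $C^{1,1}$ extension of $\varphi$ to $B_1$ (even in $x_{n+1}$) reduces matters to the model obstacle $\varphi\equiv 0$ at the cost of turning the first equation in \eqref{eq.thinobst_intro_2} into $\Delta u=f$ with $f\in L^\infty(B_1^+)$ and $\|f\|_{L^\infty}\le C[\nabla\varphi]_{C^{0,1}(B_1')}$; this lower–order term is harmless for a $C^{1,\alpha}$ conclusion and only contributes to the right–hand side of the stated estimate. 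By Remark~\ref{rem.geven} we may assume $u$ is even in $x_{n+1}$, so (in the model case) $u$ is harmonic in $B_1\setminus\Lambda$, $u\ge 0$ on $\{x_{n+1}=0\}$ and $\Delta u\le 0$ in $B_1$ by \eqref{eq.delta_xn}, where $\Lambda:=\{x_{n+1}=0\}\cap\{u=0\}$. Finally, Proposition~\ref{prop.semiconv} gives $\de^2_{\be\be}u\ge -C_0$ in $B_{1/2}$ for every unit $\be\perp\be_{n+1}$; after rescaling we normalize $C_0$, $\|u\|_{L^2(B_1)}$ and $[\nabla\varphi]_{C^{0,1}(B_1')}$ to $1$.

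Next I reduce the whole statement to tangential derivatives: it is enough to show $\nabla_{x'}u\in C^\alpha$ up to $B_{1/2}'$. Indeed this says the Dirichlet data $u(\cdot,0)$ on the flat boundary portion $B_{1/2}'$ is of class $C^{1,\alpha}$; since $u$ solves $\Delta u=f$ in $B_{1/2}^+$ with $f\in L^\infty$ and is smooth near the spherical part of $\de B_{1/2}^+$, boundary $W^{2,p}$/Schauder estimates give $u\in C^{1,\alpha}(\overline{B_{1/4}^+})$, and the claimed estimate follows by the usual covering and scaling.

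The heart is then to prove that $w:=\de_\be u$ is $C^\alpha$ for each tangential $\be$. This $w$ is even in $x_{n+1}$ and harmonic in $B_1\setminus\Lambda$; by Frehse's theorem ($u\in C^1$, recalled above) it is continuous, and since every $x_0\in\Lambda$ is a minimum point of the nonnegative function $u|_{\{x_{n+1}=0\}}$ one gets $\de_\be u(x_0)=0$, i.e.\ $w\equiv 0$ on $\Lambda$. The structural fact behind the proof is that \emph{both} $w^+:=\max(w,0)$ and $w^-:=\max(-w,0)$ are subharmonic in $B_1$: off $\Lambda$ because $w$ is harmonic, and across $\Lambda$ because each $w^\pm$ is nonnegative, even in $x_{n+1}$ and vanishes on $\Lambda$, forcing a nonnegative jump of $\de_{x_{n+1}}(w^\pm)$ across $\{x_{n+1}=0\}$ — the semiconvexity bound $\de_\be w\ge -1$ being used to make this rigorous near the (a priori unknown) relative boundary of $\Lambda$. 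Granting this, one establishes an oscillation–decay estimate: there are $\alpha\in(0,1)$ and $\delta>0$ such that for every $x_0\in B_{1/2}'$ and $r\le\frac14$,
\[
\operatorname{osc}_{B_{r/2}(x_0)} w \;\le\; (1-\delta)\operatorname{osc}_{B_r(x_0)} w \;+\; C r ,
\]
via a dichotomy inside $B_r(x_0)$: if $\Lambda$ occupies a definite proportion of $B_r'$, then $w$ is harmonic off $\Lambda$ and vanishes on a set of positive density, so a weak Harnack/barrier comparison for the subsolutions $w^\pm$ forces the oscillation to contract; if instead $\Lambda\cap B_r'$ is small, $w$ is close to harmonic in all of $B_r$ and the standard interior decay applies. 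The additive $Cr$ absorbs $f$ and the $-1$ in the semiconvexity bound and is summable along the dyadic iteration, so iterating yields $|w(x)-w(x_0)|\le C|x-x_0|^\alpha$ for $x_0\in B_{1/2}'$; together with interior estimates away from the thin space this gives $\de_\be u\in C^\alpha(\overline{B_{1/2}^+})$ for every tangential $\be$, which is exactly what was needed.

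The genuine obstacle is the contact set: a priori $\Lambda$ can meet $B_r'$ in a geometrically complicated way and nothing is known about its relative boundary, so one must show both that $w\to 0$ there and that no bad singular part of $\Delta w^\pm$ concentrates on it — this is precisely where Proposition~\ref{prop.semiconv} is indispensable, as semiconvexity prevents the tangential gradient from oscillating while $u\ge 0$ touches $0$. Carrying the bounded right–hand side coming from a merely $C^{1,1}$ obstacle through the De Giorgi–type iteration is routine but must be tracked to obtain the stated norm, and everything extends verbatim to the weighted operator $L_a$ with $a\in(-1,1)$.
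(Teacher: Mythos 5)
First, a point of comparison: the paper does not prove Theorem~\ref{thm.caf79} at all. It states it as Caffarelli's result and explicitly refers to \cite{Caf79, PSU12, Sal12, DaSa18} for the proofs, so there is no in-paper argument to measure your proposal against; it has to stand on its own. Your outer layer is fine and standard: interior regularity away from the thin space, reduction of the full statement to H\"older continuity of the tangential gradient on $B_{1/2}'$, and then boundary Schauder for the Dirichlet problem on the half-ball. The difficulty is concentrated in the two steps you assert rather than prove, and one of them would fail as stated.

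(i) After subtracting a $C^{1,1}$ extension of $\varphi$ you only have $\Delta u=f$ with $f\in L^\infty$; then $w=\partial_{\be}u$ is no longer harmonic off $\Lambda$, and $\Delta w=\partial_{\be}f$ is a first-order distribution, not a bounded function or a small measure. Consequently $w^{\pm}$ are not subharmonic, and the claim that the right-hand side ``only contributes a $+Cr$'' to a De Giorgi-type iteration on first derivatives is unjustified (incremental quotients give an error of size $\|f\|_\infty/h$, which blows up). The natural fix is to exploit that in the original formulation \eqref{eq.thinobst_intro_2} $u$ is harmonic in $B_1^+$ and to work with $\partial_{\be}u$ having boundary value $\partial_{\be}\varphi$ on $\Lambda$; but then the subharmonicity statement has to be reformulated and proved in that setting, and even in the zero-obstacle case it is a genuine lemma (cf. \cite{PSU12}): near the relative boundary of $\Lambda$ the one-sided normal derivative of $w^{\pm}$ need not exist pointwise, and the way semiconvexity enters is through an approximation argument you have not given. (ii) More seriously, the small-contact-set branch of your dichotomy is wrong as stated: smallness of $\mathcal{H}^n(\Lambda\cap B_r')$ does not make $w$ close to harmonic, because a subset of the hyperplane of tiny $\mathcal{H}^n$-measure can carry a singular Laplacian of large mass and can even have Newtonian capacity comparable to that of the full disc $B_r'$ (a spread-out union of tiny discs), so comparison with the harmonic replacement gives no decay. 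Symmetrically, the thick branch needs a capacity growth lemma, not a measure-based weak Harnack, since $\Lambda$ has zero $(n+1)$-dimensional Lebesgue measure. The classical mechanism at this point is genuinely different: in \cite{Caf79, PSU12} the decay of $|\nabla_{x'}u|$ near $\Lambda$ is extracted from the semiconvexity bound combined with $u\ge\varphi$ on the thin space and the Lipschitz estimate, and then propagated off the thin space by the maximum principle, rather than through a density dichotomy. As written, then, the core of your argument does not close; it needs either the capacity-based versions of both branches together with a proof of the subharmonicity lemma compatible with a $C^{1,1}$ obstacle, or the original Caffarelli scheme.
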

\begin{rem}
In fact, Caffarelli in \cite{Caf79} pointed out how to deal with other smooth operators coming from variational inequalities with smooth coefficients.  Thus, in \eqref{eq.thinobst_intro_2} one could consider  other divergence form operators other than the Laplacian, with smooth and uniformly elliptic coefficients. 
\end{rem}
\begin{rem}
A posteriori, one can lower the regularity assumptions on the obstacle, the coefficients, and the lower dimensional manifold. We refer to \cite{RuSh17} for a study in this direction, with $C^{1,\alpha}$ obstacles, $C^{0,\alpha}$ coefficients (in divergence form), and with the thin obstacle supported on a $C^{1,\gamma}$ manifold. 
\end{rem}

The fact that the regularity cannot be better than $C^{1,1/2}$ is due to the simple counter-example,
\begin{equation}
\label{eq.halfspacesol}
u(x) = {\rm Re}\left((x_1 + i |x_{n+1}|)^{3/2}\right)
\end{equation}
which in $(x_1, x_{n+1})$-polar coordinates can be written as 
\[
\tilde u(r, \theta) = r^{3/2} \cos \left(\textstyle{\frac{3}{2}}\theta\right). 
\]
The function $u$ is a solution to the Signorini problem: it is harmonic for $|x_{n+1}| > 0$, the normal derivative $\de_{x_{n+1}}$ vanishes at $\theta = 0$, and has the right sign at $\theta = \pi$. 

It was not until many years later that, in \cite{AC04}, Athanasopoulos and Caffarelli showed the optimal $C^{1, 1/2}$ regularity of the solution in all dimensions. That is, in the previous theorem $\alpha = \frac12$, and by the example above, this is optimal. We leave the discussion of the optimal regularity for the next section, where we deal with the classification of free boundary points. 

Historically, the classification of the free boundary was performed \emph{after} having established the optimal regularity. In the next section we show that this is not needed, and in fact one can first study the free boundary, and from that deduce the optimal regularity of the solution. 

\section{Classification of free boundary points and optimal regularity}
\label{sec.6}
The thin obstacle problem, \eqref{eq.thinobst_intro_2} or \eqref{eq.thinobst_intro_3}, is a {\em free boundary problem}, i.e., the unknowns of the problem are the solution itself, and the contact set
\[
\Lambda(u) := \big\{x'\in \R^n: u(x', 0) = \varphi(x')\big\}\times\{0\}\subset \R^{n+1},
\]
 whose topological boundary in the relative topology of $\R^n$, which we denote $\Gamma(u) = \de_{\R^n}\Lambda(u) = \de \{x'\in \R^n: u(x', 0) = \varphi(x')\}\times\{0\}$, is known as the \emph{free boundary}.

After studying the basic regularity of the solution, the next natural step in understanding the thin obstacle problem is the study of the structure and regularity of the free boundary. This is also related to the optimal regularity question presented above, since one expects that the \emph{worst} points in terms of regularity lie on the free boundary.

Let us suppose, for simplicity, that we have a zero obstacle problem, $\varphi \equiv 0$. Notice that, if the obstacle $\varphi$ is analytic, we can always reduce to this case by subtracting an even harmonic extension of $\varphi$ to the solution\footnote{If the obstacle $\varphi$ is analytic, then $\varphi$ has a harmonic extension to $B_1^+$, and its even extension in the whole $B_1$ is harmonic as well. Thus, the function $u-\varphi$ solves a thin obstacle problem with zero obstacle. This is no longer true if $\varphi$ is not analytic (not even when $\varphi\in C^\infty$), and in such situation one needs to adapt the arguments. However, the main ideas are the same.}. This is not possible under lower regularity properties (in particular, this does not include the case where $\varphi\in C^\infty$, see Section~\ref{sec.cinf}). We will, moreover, assume that we deal with an even solution (for example, by imposing an even boundary datum, see Remark~\ref{rem.geven}). 

Our problem is
\begin{equation}
\label{eq.TOP0}
  \left\{ \begin{array}{rcll}
  u & \ge & 0 & \textrm{ on }B_1 \cap \{x_{n+1} = 0\}\\
  \Delta u&=&0 & \textrm{ in } B_1\setminus\left(\{x_{n+1} = 0\}\cap \{u = 0\}\right)\\
  \Delta u & \le & 0 & \textrm{ in } B_1\\
  &&&\hspace{-2cm} u \text{ is even in the $x_{n+1}$ direction},
  \end{array}\right.
\end{equation}
and the contact set is
\[
\Lambda(u) = \{(x', 0)\in \R^{n+1} : u(x', 0) = 0\}.  
\]

In order to study a free boundary point, $x_\circ\in \Gamma(u)$, one considers \emph{blow-ups} of the solution $u$ around $x_\circ$. That is, one looks at rescalings of the form
\begin{equation}
\label{eq.ur}
u_{r,x_\circ}(x)  = \frac{u(x_\circ + rx)}{\left(\ave_{\de B_r(x_\circ)} u^2\right)^{\frac12}}.
\end{equation}
The limit of such rescalings, as $r\downarrow 0$, gives information about the behaviour of the solution around the free boundary point $x_\circ$. Thus, classifying possible blow-up profiles as $r\downarrow 0$ around free boundary points will help us to better understand the structure of the free boundary. Notice that, by construction, the blow-up sequence \eqref{eq.ur} is trivially bounded in $L^2(\de B_1)$. To prove (stronger) convergence results, we need the sequence to be bounded in more restrictive spaces (say, in $W^{1, 2}$), by taking advantage of the fact that $u$ solves problem \eqref{eq.TOP0}. 

In order to do that, a very powerful tool is \emph{Almgren's frequency function}. If we consider a solution $u$ to the Signorini problem \eqref{eq.TOP0} and take the odd extension (with respect to $x_{n+1}$), we end up with a two-valued map that is harmonic on the thin space (and has two branches). Almgren studied  in \cite{Alm00} precisely the monotonicity of the frequency function for multi-valued harmonic functions (in fact, Dirichlet energy minimizers), and thus, it is not surprising that such tool is also available in this setting. 

Let us define, for a free boundary point $x_\circ \in \Gamma(u)$, 
\[
N(r, u, x_\circ) := \frac{r\int_{B_r(x_\circ)} |\nabla u|^2}{\int_{\de B_r(x_\circ)} u^2}.
\]
We will often denote $N(r, u)$ whenever we take $x_\circ = 0$. Notice that $N(\rho, u_r) = N(r\rho, u)$, where $u_r := u_{r, 0}$ (see \eqref{eq.ur}). Then, we have the following.
\begin{lem}
\label{lem.Almgren}
Let $u$ be a solution to \eqref{eq.TOP0}, and let us assume $0\in \Gamma(u)$. Then, Almgren's frequency function 
\[
r\mapsto N(r, u) = \frac{r\int_{B_r} |\nabla u|^2}{\int_{\de B_r}u^2}
\]
is nondecreasing. Moreover, $N(r, u)$ is constant if and only if $u$ is homogeneous.
\end{lem}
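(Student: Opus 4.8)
The plan is to establish Almgren's frequency monotonicity by differentiating $\log N(r,u)$ and showing the derivative is nonnegative, using two auxiliary quantities $D(r) = \int_{B_r}|\nabla u|^2$ and $H(r) = \int_{\de B_r} u^2$, so that $N(r,u) = rD(r)/H(r)$. First I would record the basic differentiation formulas: $H'(r) = \frac{n}{r}H(r) + 2\int_{\de B_r} u\,\de_\nu u$, and — using that $u$ is harmonic off the contact set together with the Signorini conditions, which force $\int_{\de B_r} u\,\de_\nu u = \int_{B_r}|\nabla u|^2$ (the boundary term on the thin space vanishes because $\de_{x_{n+1}}u\cdot u = 0$ there by the complementarity condition, and the even symmetry handles the two sides) — I get $H'(r) = \frac{n}{r}H(r) + 2D(r)$. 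For $D'(r)$ I would use the Rellich--Pohozaev identity: testing the equation against $x\cdot\nabla u$ and integrating by parts over $B_r$ (again checking that the thin-space contributions drop out by the Signorini conditions and the evenness), one obtains $D'(r) = \frac{n-1}{r}D(r) + 2\int_{\de B_r}(\de_\nu u)^2$.

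Next I would assemble these: from $N = rD/H$ we have
\[
\frac{N'(r)}{N(r)} = \frac1r + \frac{D'(r)}{D(r)} - \frac{H'(r)}{H(r)} = \frac{2\int_{\de B_r}(\de_\nu u)^2}{D(r)} - \frac{2D(r)}{H(r)},
\]
where the $\frac1r$, $\frac{n-1}{r}$ and $\frac{n}{r}$ terms cancel. Since $D(r) = \int_{\de B_r} u\,\de_\nu u$ by the identity above, the right-hand side equals
\[
\frac{2}{D(r)H(r)}\left( \Big(\int_{\de B_r}(\de_\nu u)^2\Big)\Big(\int_{\de B_r} u^2\Big) - \Big(\int_{\de B_r} u\,\de_\nu u\Big)^2\right) \ge 0
\]
by the Cauchy--Schwarz inequality on $\de B_r$. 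This proves $N$ is nondecreasing (after noting $H(r) > 0$, which holds since $u$ cannot vanish identically on $\de B_r$ when $0\in\Gamma(u)$ — otherwise unique continuation across the free boundary would force $u\equiv 0$).

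For the equality case: $N$ is constant precisely when equality holds in Cauchy--Schwarz for a.e.\ $r$, i.e.\ $\de_\nu u(x) = \kappa(r) u(x)$ on each sphere $\de B_r$. Writing $N \equiv \gamma$, one checks $\kappa(r) = \gamma/r$, so that $\de_\nu u = \frac{\gamma}{r} u$; integrating this ODE along rays shows $u$ is homogeneous of degree $\gamma$. Conversely, if $u$ is homogeneous of degree $\gamma$, Euler's identity $x\cdot\nabla u = \gamma u$ gives $\de_\nu u = \frac{\gamma}{r}u$ on spheres, hence $D(r) = \int_{\de B_r} u\,\de_\nu u = \frac{\gamma}{r}H(r)$ and $N(r,u) = rD(r)/H(r) = \gamma$ is constant.

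The main obstacle is the careful verification that \emph{all} the boundary integrals over the thin space $\{x_{n+1}=0\}$ vanish in both the $H'$ computation and the Rellich--Pohozaev identity for $D'$. This uses, in an essential way: (i) the complementarity condition $\de_{x_{n+1}}u\cdot(u-\varphi) = 0$ with $\varphi\equiv 0$, so $u\,\de_{x_{n+1}}u = 0$ on the thin space; (ii) the even symmetry of $u$ in $x_{n+1}$, which makes the jump $[\de_{x_{n+1}}u]$ across the thin space the only surviving trace and pairs it against $u$ or against the tangential part of $x\cdot\nabla u$; and (iii) the sign condition $-\de_{x_{n+1}}^+ u \ge 0$ on $\{u=0\}$, ensuring the distributional terms have a definite sign where they do not outright vanish. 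One must also justify the differentiation under the integral sign and the integrations by parts at the (merely $C^{1,\alpha}$, by Theorem~\ref{thm.caf79}) regularity available; this is where an approximation argument (cutting off near the thin space at height $\eps$ and letting $\eps\downarrow 0$, as in the computation of $\Delta u$ in \eqref{eq.delta_xn}) is needed to make the formal manipulations rigorous.
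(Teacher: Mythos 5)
Your proposal is correct and follows essentially the same route as the paper's proof: differentiate $N$ using the Rellich--Pohozaev identity for $\frac{d}{dr}\int_{B_r}|\nabla u|^2$, use the complementarity condition $u\,\Delta u\equiv 0$ (and the vanishing of $x\cdot\nabla u$ on the contact set) to reduce everything to sphere integrals, and conclude by Cauchy--Schwarz, with equality characterizing homogeneity. The only cosmetic difference is that the paper first rescales to $r=1$ and works with the normalized quantities $D(r,u)=r^{-(n+1)}\int_{B_r}|\nabla u|^2$ and $H(r,u)=r^{-n}\int_{\partial B_r}u^2$, which absorbs the $\frac{n-1}{r}$ and $\frac{n}{r}$ terms you cancel by hand.
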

\begin{proof}
We very briefly sketch the proof (see \cite{PSU12, Sal12, DaSa18} for more details). By scaling ($N(\rho, u_r) = N(r\rho, u)$) it is enough to show that $N'(1, u)\ge 0$. Let us denote 
\[
 D(r, u) = \frac{1}{r^{n+1}}\int_{B_r}|\nabla u|^2 = r^2\int_{B_1}|\nabla u(r\cdot )|^2,\quad H(r, u) = \frac{1}{r^n}\int_{\de B_r} u^2 = \int_{\de B_1} u(r\cdot)^2,
\]
so that $N(r, u ) = \frac{D(r, u)}{H(r, u)}$ and 
$
N'(1, u) = \frac{D(1, u)}{H(1, u)}\left(\frac{D'(1, u)}{D(1, u)} - \frac{H'(1, u)}{H(1, u)}\right)
$.
Now notice that 
\begin{align*}
D'(1, u)=  2\int_{B_1} \nabla u \cdot \nabla (x\cdot\nabla u) \, dx = 2\int_{\de B_1} u^2_{\nu} - 2\int_{B_1}\Delta u (x\cdot \nabla u)\, dx,
\end{align*}
where $u_\nu$ denotes the outward normal derivative to $B_1$. Since $u$ is a solution to the Signorini problem, either $\Delta u = 0$ or $u = 0$ and $\Delta u > 0$ (in which case, $x\cdot\nabla u = 0$ by $C^1$ regularity of the solution). Thus, the second term above vanishes. On the other hand, we have that
$
H'(1, u) = 2\int_{\de B_1} u u_\nu$ and $D(1, u) = \int_{B_1}|\nabla u|^2 = \int_{\de B_1} u u_\nu,
$
where in the last equality we have used again that $u$ solves the Signorini problem, and therefore $u\Delta u \equiv 0$. 
Thus, 
\[
N'(1, u) = 2\frac{D(1, u)}{H(1, u)}\left(\frac{\int_{\de B_1} u_\nu^2}{\int_{\de B_1} u u_\nu} - \frac{\int_{\de B_1} u u_\nu}{\int_{\de B_1}u^2}\right)\ge 0, 
\]
by Cauchy-Schwarz inequality. Equality holds if and only if $u$ is proportional to $u_\nu$ on $\de B_r$ for every $r$ (that is, $u$ is homogeneous). 
\end{proof}

In particular, as a consequence of Lemma~\ref{lem.Almgren} we have that $\lambda := \lim_{r\downarrow 0}N(r, u) = N(0^+, u)$ is well-defined. This value is known as the \emph{frequency} at a free boundary point. 

From Lemma~\ref{lem.Almgren} we also have the following.

\begin{lem}
\label{lem.Almgren2}
Let $u$ be a solution to \eqref{eq.TOP0}, and let us assume $0\in \Gamma(u)$. Let $\lambda := N(0^+, u)$, and let
\[
\psi(r) := \ave_{\de B_r} u^2.
\]
Then, the function $r\mapsto r^{-2\lambda}\psi(r)$ is nondecreasing. Moreover, for every $\eps > 0$ there exists some $r_\circ = r_\circ(\eps)$ such that if $r < \rho r \le r_\circ(\eps)\le 1$,
\[
\psi(\rho r) \le \rho^{2(\lambda +\eps)} \psi(r).
\]
\end{lem}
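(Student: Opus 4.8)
The plan is to derive both statements from Lemma~\ref{lem.Almgren} by a direct computation with the functions $H$ and $D$ introduced in its proof. Recall $\psi(r) = \ave_{\de B_r} u^2 = \frac{1}{|\de B_r|}\int_{\de B_r} u^2$, which up to the dimensional constant $|\de B_1|$ equals $H(r,u) = r^{-n}\int_{\de B_r} u^2$; since multiplying $\psi$ by a constant does not affect monotonicity of $r\mapsto r^{-2\lambda}\psi(r)$, it suffices to work with $H$. First I would record the standard identity
\[
H'(r,u) = \frac{n}{r} H(r,u) + \frac{2}{r^{n}}\int_{\de B_r} u\, u_\nu = \frac{n}{r}H(r,u) + \frac{2}{r^{n}}\int_{B_r}|\nabla u|^2,
\]
where the last equality uses $u\Delta u \equiv 0$ for solutions of the Signorini problem (exactly as in the proof of Lemma~\ref{lem.Almgren}). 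Dividing by $H$ and recognizing $r\int_{B_r}|\nabla u|^2 / \int_{\de B_r}u^2 = N(r,u)$, this gives the clean formula
\[
\frac{d}{dr}\log H(r,u) = \frac{H'(r,u)}{H(r,u)} = \frac{n + 2N(r,u)}{r}.
\]

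With this in hand, the first claim is immediate: since $\lambda = N(0^+,u) \le N(r,u)$ for all $r$ by the monotonicity in Lemma~\ref{lem.Almgren}, we get
\[
\frac{d}{dr}\log\!\big(r^{-2\lambda} H(r,u)\big) = \frac{-2\lambda + n + 2N(r,u)}{r} = \frac{2\big(N(r,u)-\lambda\big)}{r} \ge 0,
\]
so $r\mapsto r^{-2\lambda}H(r,u)$, and hence $r\mapsto r^{-2\lambda}\psi(r)$, is nondecreasing. For the second claim, fix $\eps>0$. By definition of $\lambda$ as the limit of the nondecreasing function $N(\cdot,u)$, there is $r_\circ = r_\circ(\eps)\le 1$ with $N(r,u)\le \lambda+\eps$ for all $r\le r_\circ$. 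Then for $s \le r_\circ$,
\[
\frac{d}{ds}\log\!\big(s^{-2(\lambda+\eps)} H(s,u)\big) = \frac{2\big(N(s,u)-\lambda-\eps\big)}{s} \le 0,
\]
so $s\mapsto s^{-2(\lambda+\eps)}H(s,u)$ is nonincreasing on $(0,r_\circ]$. Applying this at $s = \rho r$ and $s = r$ (both $\le r_\circ$, since $r < \rho r \le r_\circ$) yields $(\rho r)^{-2(\lambda+\eps)}H(\rho r, u) \le r^{-2(\lambda+\eps)}H(r,u)$, i.e. $H(\rho r,u)\le \rho^{2(\lambda+\eps)}H(r,u)$, and the same for $\psi$ after dividing by the common constant $|\de B_1|$.

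The only genuinely delicate point is justifying that $H$ is differentiable and that the integration by parts $\int_{\de B_r}u\,u_\nu = \int_{B_r}|\nabla u|^2 - \int_{B_r}u\Delta u$ is valid with the boundary term dropping out; this is exactly the regularity and structural input already used in proving Lemma~\ref{lem.Almgren} (the solution is $C^1$ up to the thin space, and on the contact set one has $u=0$ while the distributional Laplacian is a measure supported there, so $\int u\Delta u = 0$). Granting that — which the excerpt has already granted in the proof of Lemma~\ref{lem.Almgren} — everything else is the elementary logarithmic-derivative manipulation above; the passage from a nonincreasing quantity to the stated inequality is just monotonicity evaluated at two radii. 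No new monotonicity formula is needed: the whole statement is a bookkeeping consequence of $H'/H = (n+2N)/r$ together with the monotonicity and the definition of the limiting frequency.
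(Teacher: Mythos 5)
Your proof is correct and follows essentially the same route as the paper's: one differentiates $r^{-2\lambda}\psi(r)$, uses the identity $N(r,u)=\frac r2\frac{d}{dr}\log\psi(r)$ (valid because $u\,\Delta u\equiv 0$) together with the monotonicity of $N$ from Lemma~\ref{lem.Almgren}, and for the second claim integrates the logarithmic derivative over $(r,\rho r)$ after choosing $r_\circ(\eps)$ with $N(r_\circ,u)\le\lambda+\eps$. The only thing to fix is a self-cancelling bookkeeping slip: with the normalized $H(r,u)=r^{-n}\int_{\de B_r}u^2$ the correct identity is $H'(r,u)=\frac{2}{r^{n}}\int_{\de B_r}u\,u_\nu$, i.e.\ $\frac{d}{dr}\log H=\frac{2N(r,u)}{r}$ (the extra term $\frac{n}{r}H$ you wrote pertains to the unnormalized integral $\int_{\de B_r}u^2$), and you then silently drop that spurious $n$ when writing $\frac{-2\lambda+n+2N(r,u)}{r}=\frac{2(N(r,u)-\lambda)}{r}$; since the two errors cancel, the logarithmic derivatives you actually use, and hence both conclusions, are correct.
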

\begin{proof}
Differentiating
\[
\frac{d}{dr} \left(r^{-2\lambda}\psi(r) \right) = 2r^{-2\lambda-n-1}\left\{r\int_{B_r}|\nabla u|^2 - \lambda \int_{\de B_r} u^2\right\}\ge 0,
\]
where we are also using the monotonicity of $N(r, u)$ from Lemma~\ref{lem.Almgren}. 

On the other hand, choose $r_\circ(\eps)$ such that $N(r_\circ, u)\le \lambda +\eps$.  Then, just noticing that 
\begin{equation}
\label{eq.compwith}
N(r, u) = \frac{r}{2}\frac{d}{dr} \log \psi(r)\le \lambda+\eps
\end{equation}
for $r < \rho r \le r_\circ$ and integrating in $(r, \rho r)$ we get the desired result.
\end{proof}

As a consequence of Almgren's monotonicity formula we get the existence of a (homogeneous) blow-up limit around free boundary points, $u_0$. Notice that we are not claiming the uniqueness of such blow-up, but its degree of homogeneity is independent of the sequence. 
\begin{cor}
\label{cor.strongconv}
Let $u$ be a solution to \eqref{eq.TOP0}, and let us assume $0\in \Gamma(u)$. Let us denote
\[
u_r(x) = \frac{u(rx)}{\left(\ave_{\de B_r} u^2\right)^{1/2}}.
\]
Then, for any sequence $r_k\downarrow 0$ there exists a subsequence $r_{k_j}\downarrow 0$ such that
\begin{align}
u_{r_{k_j}} & \to u_0\quad\textrm{strongly in } L^2_{\rm loc} (\R^{n+1}),\\
\nabla u_{r_{k_j}} & \rightharpoonup \nabla u_0\quad\textrm{weakly in } L^2_{\rm loc}(\R^{n+1}),\\
\label{eq.strconv} u_{r_{k_j}} & \to u_0\quad\textrm{strongly in } C^1_{\rm loc}(\overline{\R^{n+1}_+}),
\end{align}
for some $N(0^+, u)$-homogeneous global solution $u_0$ to the thin obstacle problem with zero obstacle, \eqref{eq.TOP0}, and $\|u_0\|_{L^2(\partial B_1)} = c_n$, for some dimensional constant $c_n  >0$. Moreover $u_0\in C^{1,\alpha}_{\rm loc}(\{x_{n+1} \ge 0\})$ and $N(0^+, u) > 1$. 
\end{cor}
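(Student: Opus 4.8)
The plan is to extract the blow-up limit from Almgren's frequency bound via a standard compactness argument, and then upgrade the convergence using the equation. First I would observe that Lemma~\ref{lem.Almgren} gives $N(r,u)\le N(1,u)=:\Lambda_0<\infty$ for all $r\le 1$, which by Lemma~\ref{lem.Almgren2} (the doubling estimate $\psi(\rho r)\le \rho^{2(\lambda+\eps)}\psi(r)$, but also the trivial upper bound coming directly from the frequency bound) controls $\ave_{\de B_\rho} u_r^2$ uniformly in $r$ for $\rho$ in any fixed compact range. Combined with $\int_{B_\rho}|\nabla u_r|^2 = N(\rho,u_r)\,\rho^{n}\,\ave_{\de B_\rho}u_r^2 = N(r\rho,u)\,\rho^n\,\ave_{\de B_\rho}u_r^2$, this yields a uniform bound for $u_{r}$ in $W^{1,2}(B_R)$ for every fixed $R$. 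Hence along any sequence $r_k\downarrow 0$ there is a subsequence with $u_{r_{k_j}}\rightharpoonup u_0$ weakly in $W^{1,2}_{\rm loc}$ and strongly in $L^2_{\rm loc}$ (Rellich), and one checks $u_0$ is a global weak solution of \eqref{eq.TOP0}: the obstacle conditions pass to the limit by the $L^2$ (hence a.e., after a further subsequence) convergence on the thin space together with uniform $C^{1,\alpha}$ bounds from Theorem~\ref{thm.caf79}, and harmonicity off the contact set passes to the limit by the weak formulation.

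Next I would upgrade to strong $C^1_{\rm loc}(\overline{\R^{n+1}_+})$ convergence. This is where Caffarelli's estimate does the work: each $u_{r_{k_j}}$ solves a thin obstacle problem (with zero obstacle, after the reductions already in force) in $B_R$, so Theorem~\ref{thm.caf79} together with the uniform $W^{1,2}$ (equivalently $L^2$) bound gives a uniform bound for $u_{r_{k_j}}$ in $C^{1,\alpha}(\overline{B_{R/2}^+})$; by Arzel\`a--Ascoli a further subsequence converges in $C^1_{\rm loc}(\overline{\R^{n+1}_+})$, and the limit must coincide with $u_0$. The same bound then shows $u_0\in C^{1,\alpha}_{\rm loc}(\{x_{n+1}\ge 0\})$. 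The normalization $\|u_0\|_{L^2(\de B_1)}=c_n$ follows from the choice $\ave_{\de B_1}u_{r}^2=1$ in \eqref{eq.ur} together with the strong $L^2$ convergence on $\de B_1$ (which in turn follows from the strong $C^1$ convergence up to the thin space, or from trace continuity). The homogeneity of $u_0$ is the payoff of Almgren: since $N(\rho,u_{r_{k_j}})=N(r_{k_j}\rho,u)\to\lambda=N(0^+,u)$ for every fixed $\rho>0$ by monotonicity, one gets $N(\rho,u_0)=\lambda$ for all $\rho$ by the strong $W^{1,2}_{\rm loc}$ convergence (strong here because one has the extra $C^1$ convergence, so the gradients converge strongly in $L^2_{\rm loc}$), and then the equality case of Lemma~\ref{lem.Almgren}, applied to $u_0$, forces $u_0$ to be $\lambda$-homogeneous. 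In particular $\lambda$ is independent of the subsequence.

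Finally, the bound $N(0^+,u)>1$. A lower bound $\lambda\ge 1$ is immediate because $u_0$ is a nonzero homogeneous harmonic function off the thin obstacle that vanishes on a nonempty set and its odd (in $x_{n+1}$) extension is a nonconstant harmonic function on $\R^{n+1}\setminus\Lambda(u_0)$; but in fact one can argue $\lambda\ge 1$ directly from $N(r,u)\ge$ the frequency of a half-space linear function, or just from $u_0$ being nonconstant and $0\in\Gamma(u_0)$. To rule out $\lambda=1$ one uses that a $1$-homogeneous global solution of \eqref{eq.TOP0} is linear (its gradient is $0$-homogeneous, hence the solution is a linear function, harmonic in all of $\R^{n+1}$), and a nonzero linear function vanishing on a set with nonempty boundary in $\{x_{n+1}=0\}$ and satisfying the sign conditions $u\ge 0$, $\de_{x_{n+1}}u\le 0$ on $\{u=0\}$ is impossible unless $u\equiv 0$; since $\ave_{\de B_1}u_0^2=c_n^2>0$, this contradiction gives $\lambda>1$. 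I expect the main technical obstacle to be the passage to strong $W^{1,2}_{\rm loc}$ convergence of the gradients (needed to conclude $N(\rho,u_0)=\lambda$ and hence homogeneity) — this is exactly the point where one must invoke the uniform $C^{1,\alpha}$ estimate of Theorem~\ref{thm.caf79} rather than mere weak convergence, since weak $W^{1,2}$ convergence alone would only give $N(\rho,u_0)\le\lambda$.
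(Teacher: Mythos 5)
Most of your argument — uniform $W^{1,2}$ bounds from Almgren's monotonicity plus the doubling estimate of Lemma~\ref{lem.Almgren2}, compactness, upgrading to $C^1_{\rm loc}(\overline{\R^{n+1}_+})$ convergence via the uniform $C^{1,\alpha}$ estimate of Theorem~\ref{thm.caf79}, and homogeneity of $u_0$ from $N(\rho,u_{r_{k_j}})=N(r_{k_j}\rho,u)\to N(0^+,u)$ together with the rigidity case of Almgren — is correct and is essentially the paper's proof, with welcome extra care about why the gradients converge strongly.

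The step proving $N(0^+,u)>1$, however, contains a genuine gap. Your claim that a $1$-homogeneous global solution of \eqref{eq.TOP0} must be a linear function, harmonic in all of $\R^{n+1}$, is false: the function $u_0(x)=-c\,|x_{n+1}|$ with $c>0$ is even, vanishes on the thin space (so $u_0\ge 0$ there), is harmonic off the thin space, and has $\Delta u_0\le 0$ as a measure, hence it is a perfectly admissible nonzero $1$-homogeneous global solution. The inference ``$\nabla u_0$ is $0$-homogeneous, hence $u_0$ is linear'' only gives linearity on each closed half-space, since the normal derivative may jump across $\{x_{n+1}=0\}$ (solutions are merely Lipschitz across the thin space). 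Moreover, your contradiction argument tacitly assumes the limit has a free boundary point at the origin (``vanishing on a set with nonempty boundary''), but this need not pass to the limit: for the candidate $-c\,|x_{n+1}|$ the contact set is the whole thin space and $\Gamma(u_0)=\varnothing$, so no contradiction arises. The correct way to exclude $\lambda\le 1$ uses that $0\in\Gamma(u)$ for the \emph{original} solution: since $u(\cdot,0)\ge 0$ attains its minimum at $0$, the tangential gradient vanishes there, and since $0$ is approximated by points of $\{u(\cdot,0)>0\}$ where $\de_{x_{n+1}}^+u=0$, the $C^{1,\alpha}$ continuity of $\nabla u$ up to the thin space gives $\nabla u(0)=0$ as well; hence $|u(x)|\le C|x|^{1+\alpha}$ near $0$, and comparing this with the lower bound $\psi(s)\ge c\,s^{2(\lambda+\eps)}$ furnished by Lemma~\ref{lem.Almgren2} forces $N(0^+,u)\ge 1+\alpha>1$. (Equivalently: pass $\de_{x_{n+1}}^+u_{r_{k_j}}(0)=0$ and $\nabla_{x'}u_{r_{k_j}}(0)=0$ to the limit using the $C^1$ convergence; a $1$-homogeneous limit would then have identically vanishing gradient on $\overline{\R^{n+1}_+}$, hence $u_0\equiv 0$, contradicting $\|u_0\|_{L^2(\de B_1)}=c_n>0$.) Your bound $\lambda\ge 1$ is fine once phrased via the uniform Lipschitz/$C^{1}$ bound, but as written it also leans on $0\in\Gamma(u_0)$, which is unjustified.
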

\begin{proof}
The proof of the strong convergence in $L^2$ and weak convergence in $W^{1, 2}$ is a consequence of Lemma~\ref{lem.Almgren}, which shows that the sequence $u_{r_k}$ is uniformly bounded in $W^{1,2}(B_1)$. Indeed, take any ball centered at the origin, $B_R\subset \R^n$. Then, using the notation from Lemma~\ref{lem.Almgren2},
\[
\int_{B_R}|\nabla u_r|^2 = \frac{r^{1-n}}{\psi(r)}\int_{B_{rR}} |\nabla u|^2\le \frac{R^{n-1}\psi(Rr)}{\psi(r)} N(1, u)\le C(R) N(1, u),
\]
where in the last step we are using that $r$ is small enough together with the second part of Lemma~\ref{lem.Almgren2} with $\eps = 1$. Also notice that $\|u_r\|_{L^2(\partial  B_1)} = c_n$, so $u_r$ is bounded in $W^{1,2}$ for every compact set (again, by Lemma~\ref{lem.Almgren2}).

The homogeneity of $u_0$ comes from the fact that 
\[
N(\rho, u_0) = \lim_{r\downarrow 0} N(\rho, u_r) = \lim_{r\downarrow 0} N(r\rho, u) = N(0^+, u),
\] 
and Lemma~\ref{lem.Almgren}. The strong convergence in $C^1$ follows from the $C^{1,\alpha}$ regularity estimates for the solution, Theorem~\ref{thm.caf79}, and it implies that $u_0 \in C^{1,\alpha}_{\rm loc}(\{x_{n+1} \ge 0\})$ solves the thin obstacle problem \eqref{eq.TOP0} in $\R^{n+1}$, and $N(0^+, u_0) > 1$.
\end{proof}



And we can now classify blow-ups at free boundary points.

\begin{thm}[Classification of blow-ups]
\label{thm.blowupthm}
Let $u$ be a solution to \eqref{eq.TOP0}, and let us assume $0\in \Gamma(u)$. Let $u_0$ denote any blow-up at $0$ with homogeneity $\lambda := N(0^+, u)$. Then $\lambda\in \left\{\frac32\right\}\cup [2, \infty)$. Moreover, if $ \lambda = \frac32$, then $u_0$ is (after a rotation) of the form \eqref{eq.halfspacesol}. 
\end{thm}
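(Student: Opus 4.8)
The plan is to reduce the classification to an eigenvalue problem on the hemisphere together with an application of Almgren's monotonicity and the semiconvexity estimate of Proposition~\ref{prop.semiconv}. Fix a blow-up $u_0$ at $0$, which by Corollary~\ref{cor.strongconv} is a $\lambda$-homogeneous, $C^{1,\alpha}_{\rm loc}(\{x_{n+1}\ge 0\})$ global solution of \eqref{eq.TOP0} with $\lambda = N(0^+,u) > 1$, even in $x_{n+1}$, and normalized by $\|u_0\|_{L^2(\partial B_1)} = c_n$. Since blow-ups inherit the semiconvexity, $\partial_{\be\be} u_0 \ge 0$ for every direction $\be$ parallel to the thin space (the lower bound $-C(\cdots)$ scales away under the blow-up). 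The key structural fact I would extract from this is that each tangential second derivative $\partial_{\be\be} u_0$ is itself a nonnegative function that is harmonic off the contact set, is $(\lambda-2)$-homogeneous, and — by differentiating the Signorini conditions — is a (sub)solution of an appropriate problem; similarly $\partial_\be u_0$ is $(\lambda-1)$-homogeneous and harmonic in $B_1\setminus\Lambda(u_0)$.

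The main dichotomy is whether $\lambda < 2$ or $\lambda \ge 2$. Suppose first $\lambda < 2$, so $\lambda - 2 < 0$; then $w := \partial_{\be\be} u_0 \ge 0$ is $(\lambda-2)$-homogeneous. I would argue that a nonnegative, $(\lambda-2)$-homogeneous function with $\lambda-2\in(-1,0)$ that is harmonic away from the thin contact set and has the right sign of the normal derivative across the thin space must vanish identically — for instance by testing Almgren-type or Liouville-type arguments, or simply because a nonnegative homogeneous function of negative degree that is subharmonic away from a codimension-one set and locally bounded away from the origin cannot exist unless it is zero (the degree forces a non-integrable singularity at $0$ incompatible with being, say, $L^2_{\rm loc}$, which $w$ is since $u_0\in C^{1,\alpha}$ gives $w\in L^2_{\rm loc}$ only if... ) — more robustly, one uses that $u_0\in C^{1,\alpha}$ forces $\lambda\ge 1+\alpha$ but to pin down $w\equiv 0$ one invokes that $\partial_{\be\be}u_0$, being $\ge 0$ and $(\lambda-2)$-homogeneous with $\lambda-2<0$, cannot be a nontrivial distributional solution. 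Granting $\partial_{\be\be}u_0\equiv 0$ for all tangential $\be$, $u_0(x',x_{n+1})$ is affine in $x'$, so $u_0 = a(x_{n+1}) + b(x_{n+1})\cdot x'$; feeding this into $\Delta u_0 = 0$ off the contact set, the homogeneity $\lambda<2$, evenness in $x_{n+1}$, and the Signorini sign conditions on $\{x_{n+1}=0\}$, a direct ODE analysis in the $(x_1,x_{n+1})$-plane forces $u_0(x) = c\,\mathrm{Re}\big((x_1 + i|x_{n+1}|)^{3/2}\big)$ after a rotation, whence $\lambda = \tfrac32$. This also shows no homogeneity in $(1,\tfrac32)\cup(\tfrac32,2)$ can occur, because the ODE has no admissible solution of any other degree in that range.

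The remaining step is to show $\lambda = 1$ and $\lambda\in(1,\tfrac32)$ are excluded — but $\lambda > 1$ is already in Corollary~\ref{cor.strongconv}, and the interval $(1,2)\setminus\{\tfrac32\}$ is excluded by the ODE analysis above, so indeed $\lambda\in\{\tfrac32\}\cup[2,\infty)$. I expect the genuinely delicate point to be the rigorous justification that $\partial_{\be\be}u_0\equiv 0$ when $\lambda<2$: one must be careful that $\partial_{\be\be}u_0$ is only a priori a measure (it is nonnegative but could have a singular part on the contact set), so the argument should be phrased either via a Liouville statement for the homogeneous solution $\partial_\be u_0$ of degree $\lambda-1\in(0,1)$ (a bounded or sublinear harmonic function on $\R^{n+1}\setminus\Lambda$ with controlled behavior must be essentially one-dimensional), or via testing Almgren's formula applied to $\partial_\be u_0$ and using that its frequency would have to be $\lambda - 1 < \tfrac12$, contradicting the analogue of the lower frequency bound. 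The cleanest route, which I would follow, is: reduce by the semiconvexity to $u_0$ depending on at most two variables, then solve the resulting planar Signorini problem explicitly by separation of variables in polar coordinates $(r,\theta)$, $u_0 = r^\lambda g(\theta)$, where $g$ solves $g'' + \lambda^2 g = 0$ on $(0,\pi)$ with the mixed Signorini/Neumann boundary data at $\theta = 0,\pi$; the admissible values of $\lambda$ below $2$ are then read off as exactly $\lambda = \tfrac32$ with $g(\theta) = \cos(\tfrac32\theta)$.
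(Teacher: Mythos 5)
There is a genuine gap, and it sits exactly where you suspected: the claim that $\partial_{\be\be}u_0\equiv 0$ for all tangential $\be$ whenever $\lambda<2$ is false. The half-space solution \eqref{eq.halfspacesol} itself is a counterexample: for $u_0(x)={\rm Re}\big((x_1+i|x_{n+1}|)^{3/2}\big)$ one has $\partial_{11}u_0=\frac34\,{\rm Re}\big((x_1+i|x_{n+1}|)^{-1/2}\big)=\frac34 r^{-1/2}\cos(\theta/2)\ge 0$, which is a nontrivial, nonnegative, $(-\frac12)$-homogeneous function, harmonic off the contact set, vanishing continuously on it, and perfectly integrable (even $L^2_{\rm loc}$) near the origin. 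So no Liouville-type or non-integrability argument can force a nonnegative $(\lambda-2)$-homogeneous second derivative to vanish; indeed, if your mechanism worked it would also exclude $\lambda=\frac32$, contradicting the existence of the half-space solution. Consequently the subsequent reduction (``$u_0$ affine in $x'$'', or ``$u_0$ depends on at most two variables, then separate variables in the plane'') is unjustified: semiconvexity only yields convexity of $u_0$ in the tangential directions, not two-dimensionality, and the proposed Almgren-on-$\partial_\be u_0$ variant fails both because $\partial_\be u_0$ is not a Signorini solution and because homogeneity $\lambda-1=\frac12$ for a first derivative is precisely what occurs at regular points, not a contradiction.

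What the convexity actually buys — and this is how the paper proceeds for $1<\lambda<2$ — is that the contact set $\Lambda(u_0)\subset\{x_{n+1}=0\}$ is a convex cone. One first rules out that this cone has empty interior (otherwise $\partial_{x_{n+1}}u_0$ would be harmonic in the half-space, zero on the thin space, $(\lambda-1)$-homogeneous with sublinear growth, hence $u_0\equiv 0$). Then, picking a direction $\be$ interior to the cone, the two functions $w_1=\partial_{-\be}u_0$ and $w_2=-\partial_{x_{n+1}}u_0$ (evenly reflected) are nonnegative in all of $\R^{n+1}$, $(\lambda-1)$-homogeneous, harmonic off the thin space, and vanish respectively on $\Lambda(u_0)$ and on its complement in the thin space. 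Each is therefore a first Dirichlet eigenfunction on the sphere minus the corresponding slit, with the same homogeneity $\lambda-1$; since the convex cone lies in a half-space of $\{x_{n+1}=0\}$ while its complement contains one, monotonicity of the first eigenvalue with respect to the domain forces both slits to be half-spaces, whence $\lambda-1=\frac12$, $\lambda=\frac32$, and $u_0$ is the half-space solution after a rotation. If you want to salvage your write-up, replace the ``$\partial_{\be\be}u_0\equiv 0$'' step by this eigenvalue-comparison argument (or an equivalent one); your Step 1 (semiconvexity scaling away under the blow-up) and the final identification of the $\frac32$-profile are fine.
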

\begin{proof}
We need to determine the possible values $\lambda = N(0^+,u)$ in Corollary~\ref{cor.strongconv} whenever $\lambda < 2$. 

Thus, from now on, let us assume that $1<\lambda < 2$.  We separate the rest of the proof into two steps.
\\[0.2cm]
{\it Step 1: Convexity of $u_0$.} Let us start by showing that $u_0$ is convex in the directions parallel to the thin space, and thus, in particular, the restriction $u_0 |_{\{x_{n+1} = 0\}}$ is convex. We do so by means of the semiconvexity estimates from Proposition~\ref{prop.semiconv} applied to $u_0$. Indeed, by rescaling Proposition~\ref{prop.semiconv} to a ball of radius $R \ge 1$ we get 
\[
R^2 \inf_{B_{R/2}} \partial_{\be\be} u_0 \ge - C R^{-\frac{n}{2}} \|u_0\|_{L^2(B_R)} = -CR^\lambda\|u_0\|_{L^2(B_1)},
\]
for some dimensional constant $C$, and for $\be \cdot \be_{n+1} = 0$, where in the last equality we are using the $\lambda$-homogeneity of $u_0$. That is, by letting $R \to \infty$,
\[
\inf_{B_{R/2}} \partial_{\be\be} u_0 \ge- C R^{\lambda - 2}\|u_0\|_{L^2(B_1)}\to 0,\quad\text{as}\quad R\to \infty.
\]
%
Hence, $u_0$ is convex in the directions tangential to the thin space. 
\\[0.2cm]
{\it Step 2: Degree of homogeneity of $u_0$}. Let us now consider $\Lambda(u_0)\subset \{x_{n+1} = 0\}$ the contact set for $u_0$, which is a convex cone, from the convexity and homogeneity of $u_0$. 

If $\Lambda(u_0)$ has empty interior (restricted to the thin space), then $\de_{x_{n+1}} u_0$ is a harmonic function in $\{x_{n+1} > 0\}$, identically zero on the thin space, and $(\lambda- 1)$-homogeneous. In particular, from the sublinear growth at infinity,  $\de_{x_{n+1}} u_0\equiv 0$ everywhere, and thus $u_0 \equiv 0$, a contradiction. Hence, $\Lambda(u_0)$ has non-empty interior on the thin space. 

Let us denote $\be \in \mathbb{S}^{n-1}$ a direction contained in the interior of $\Lambda(u_0)$ (in particular, $\be\cdot\be_{n+1}  = 0$). Let us define $w_1 := \de_{-\be} u_0$, and $w_2 := -\de_{x_{n+1}} u_0$ for $x_{n+1} \ge 0$, and $w_2 := \de_{x_{n+1}} u_0$ for $x_{n+1}< 0$. Notice that since $u_0$ is even and $\partial_{x_{n+1}} u_0 = 0$ whenever $u_0 > 0$ on the thin space, $w_2$ is continuous across $\{x_{n+1} = 0\}$. Moreover, $w_1$ and $w_2$ are $(\lambda-1)$ homogeneous functions, harmonic in $\{x_{n+1} \neq 0\}$


Notice that $w_1 = 0$ in $\Lambda(u_0)$. In particular, for any $x_\circ\in \{x_{n+1} = 0\}$, $x_\circ + t\be\in \Lambda(u_0)$ for $t\in \R$ large enough (since $\Lambda(u_0)$ is a cone with non-empty interior and $\be$ is a direction contained in it). Thus, from the convexity of $u_0$, $w_1$ has to be monotone along $x_\circ+t\be$, and thus $w_1 \ge 0$ on the thin space. Since $w_1$ is $(\lambda-1)$-homogeneous (i.e., it has sublinear growth), and is non-negative on the thin space, there is a unique $(\lambda-1)$-homogeneous harmonic extension that coincides with $w_1$ (by the Poisson kernel), and it is non-negative as well. Hence, $w_1\ge 0$ in $\R^{n+1}$. 

In addition, $w_2\ge 0$ on the thin space as well (since $u_0$ solves the thin obstacle problem), and it has sublinear growth at infinity. That is, its harmonic extension is itself, and thus $w_2 \ge 0$ in $\R^{n+1}$. Moreover, notice that $w_2 = 0$ in $\{x_{n+1} = 0\}\setminus \Lambda(u_0)$ (in particular, $w_1 w_2 \equiv 0$ on $\{x_{n+1} = 0\}$). 

On the one hand, we have that the restriction of $w_1$ to the unit sphere must be the first eigenfunction of the Dirichlet problem for the spherical Laplacian  with zero data on $\de B_1 \cap \Lambda(u_0)$ (since it is non-negative), and it has homogeneity $\lambda-1$. On the other hand, the restriction of $w_2$ to the unit sphere must be the first eigenfunction with zero data on $\de B_1 \cap (\{x_{n+1}= 0\}\setminus \Lambda(u_0))$, and it has the same homogeneity $\lambda-1$. Since $\Lambda(u_0)$ is a (convex) cone, it is contained in a half-space (of $\{x_{n+1} = 0\}$), and therefore, $\{x_{n+1} = 0\}\setminus \Lambda(u_0)$ contains a half-space. Since the corresponding homogeneities are the same (i.e., $\lambda-1$), by monotonicity of eigenvalues with respect to the domain we must have that, after a rotation, $\Lambda(u_0)$ and its complement are equal, and hence, they are half-spaces themselves. The homogeneity for the half-space in this situation is $\frac12$, so $\lambda = \frac32$, and the corresponding eigenfunction is
\[
u_0(x) = {\rm Re}\left((x_1 + i |x_{n+1}|)^{3/2}\right),
\]
as we wanted to see. 
\end{proof}

As a consequence of the previous result, we have a dichotomy for free boundary points. 
\begin{prop}[Classification of free boundary points and optimal regularity]
Let $u$ be a solution to \eqref{eq.TOP0}. Then, the free boundary can be divided into two sets, 
\[
\Gamma(u) = {\rm Reg}(u) \cup {\rm Deg}(u).
\]
The set of \emph{regular points}, 
\[
{\rm Reg}(u) := \left\{x_\circ \in \Gamma(u) : N(0^+, u, x_\circ) = {\textstyle \frac32}\right\},
\]
and the set of \emph{degenerate points},
\[
{\rm Deg}(u) := \left\{x_\circ \in \Gamma(u) : N(0^+, u, x_\circ) \ge 2\right\}.
\]
Moreover, $u\in C^{1, 1/2}(B_{1}^+)$ with 
\begin{equation}
\label{eq.intro_gives2}
\|u\|_{C^{1,1/2}(\overline{B_{1/2}^+})}  \le C \|u\|_{L^\infty(B_1)}
\end{equation}
for some $C$ depending only on $n$, and the set of regular points is open (in the relative topology of the free boundary). 
\end{prop}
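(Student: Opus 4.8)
The plan is to derive everything from Theorem~\ref{thm.blowupthm} (classification of blow-ups) together with Corollary~\ref{cor.strongconv}. The decomposition $\Gamma(u) = {\rm Reg}(u) \cup {\rm Deg}(u)$ is immediate: for every $x_\circ \in \Gamma(u)$ the frequency $N(0^+, u, x_\circ)$ is a well-defined number (monotonicity of Almgren's frequency, Lemma~\ref{lem.Almgren}), and by Theorem~\ref{thm.blowupthm} applied at $x_\circ$ (the theorem is stated at the origin but the problem is translation-invariant along the thin space, and any $x_\circ \in \Gamma(u)$ can be taken as the new origin after subtracting the obstacle, here $\varphi \equiv 0$) this number lies in $\{\tfrac32\} \cup [2,\infty)$. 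So each free boundary point is either regular or degenerate, and the two sets are disjoint by definition.

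For the optimal regularity estimate \eqref{eq.intro_gives2}, the first step is to upgrade the growth control \eqref{eq.compwith}/Lemma~\ref{lem.Almgren2} into a pointwise estimate on $u$ near the free boundary. Concretely, I would show that for any $x_\circ \in \Gamma(u) \cap B_{1/2}$ one has
\[
\sup_{B_r(x_\circ)} |u| \le C r^{3/2} \|u\|_{L^\infty(B_1)}, \qquad 0 < r < \tfrac14,
\]
with $C = C(n)$. The mechanism: since the only admissible frequencies are $\tfrac32$ and values $\ge 2 > \tfrac32$, we always have $N(0^+, u, x_\circ) \ge \tfrac32$, so Lemma~\ref{lem.Almgren2} with any fixed small $\eps$ gives $\psi_{x_\circ}(\rho r) \le \rho^{2(\tfrac32+\eps)}\psi_{x_\circ}(r)$ for $r$ below a threshold $r_\circ(\eps, x_\circ)$; one must argue this threshold can be taken uniform in $x_\circ$ (by a compactness/continuity argument for the frequency as a function of the base point, or by an indirect blow-up contradiction). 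This yields the $L^2$-on-spheres decay at rate $3/2$; then, because the free boundary condition forces $|u|$ to be comparable on $\partial B_r(x_\circ)$ to its $L^2$ average up to harmonic estimates on each side $\{x_{n+1} > 0\}$, $\{x_{n+1} < 0\}$ (or via the even reflection and interior estimates for the subharmonic function), one converts the $L^2$ decay into the sup bound. Points away from $\Gamma(u)$ are handled by interior elliptic estimates on whichever side, combined with the distance to the free boundary. Finally, the quadratic-type growth at $x_\circ$ plus $u \in C^{1,\alpha}$ (Theorem~\ref{thm.caf79}) is promoted to the full $C^{1,1/2}(\overline{B_{1/2}^+})$ bound by a standard interpolation/covering argument: estimate $[\nabla u]_{C^{1/2}}$ on balls $B_{d(x)/2}(x)$ where $d(x) = {\rm dist}(x, \Gamma(u))$ via interior estimates, and glue across the free boundary using the $3/2$ growth; this is the argument of Athanasopoulos–Caffarelli \cite{AC04}.

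For openness of ${\rm Reg}(u)$ in the relative topology of $\Gamma(u)$: the key is upper semicontinuity of the frequency $x_\circ \mapsto N(0^+, u, x_\circ)$ — which follows since $N(r, u, x_\circ)$ is continuous in $x_\circ$ for each fixed $r$ and nondecreasing in $r$, hence $N(0^+, \cdot)$ is an infimum of continuous functions — together with the gap in the spectrum: the only value below $2$ is $\tfrac32$. Thus if $N(0^+, u, x_\circ) = \tfrac32$ and $x_k \to x_\circ$ along $\Gamma(u)$, then $\limsup_k N(0^+, u, x_k) \le \tfrac32$, but also $N(0^+, u, x_k) \in \{\tfrac32\} \cup [2,\infty)$, forcing $N(0^+, u, x_k) = \tfrac32$ for $k$ large. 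Hence a relative neighborhood of $x_\circ$ in $\Gamma(u)$ consists of regular points.

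The main obstacle I expect is the \emph{uniformity} in the growth estimate — upgrading Lemma~\ref{lem.Almgren2}'s $r_\circ(\eps)$ to something independent of the base point $x_\circ \in \Gamma(u) \cap B_{1/2}$. This is where one genuinely uses that the frequency gap is \emph{uniform} (no admissible values in $(\tfrac32, 2)$): without it, a sequence of base points with frequencies creeping up toward $\tfrac32$ could spoil the estimate. The cleanest route is an indirect compactness argument — assume the bound fails along a sequence of points and radii, rescale, extract a blow-up limit which is a global solution with frequency $\le \tfrac32$ hence exactly $\tfrac32$ hence of the explicit form \eqref{eq.halfspacesol}, and derive a contradiction with the failure of the bound. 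Everything else (the decomposition, the conversion of $L^2$ to $L^\infty$ decay, the interpolation to $C^{1,1/2}$, and openness) is then routine.
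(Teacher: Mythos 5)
Your overall route coincides with the paper's: the dichotomy is read off from Corollary~\ref{cor.strongconv} and Theorem~\ref{thm.blowupthm}, the openness of ${\rm Reg}(u)$ comes from upper semicontinuity of $x\mapsto N(0^+,u,x)$ plus the absence of frequencies in $(\frac32,2)$, and the $C^{1,1/2}$ bound is obtained from a $r^{3/2}$ growth estimate at free boundary points combined with interior harmonic estimates and a two-case gluing across the free boundary. However, the way you justify the growth estimate has a concrete flaw. The inequality you quote from Lemma~\ref{lem.Almgren2}, namely $\psi_{x_\circ}(\rho r)\le \rho^{2(\lambda+\eps)}\psi_{x_\circ}(r)$, is stated for $\rho>1$, so the \emph{larger} radius sits on the left; rearranged it says $\psi_{x_\circ}(r)\ge (r/s)^{2(\lambda+\eps)}\psi_{x_\circ}(s)$ for $r<s\le r_\circ(\eps)$, i.e.\ it is a \emph{lower} bound on $\psi_{x_\circ}$ at small radii (the outward doubling control used for the blow-up compactness), not the upper bound $\psi_{x_\circ}(r)\le C r^{3}$ that you need; moreover you cannot replace $\lambda$ by $\frac32$ in that exponent, since $\lambda\ge\frac32$ makes $\rho^{2(\lambda+\eps)}\ge\rho^{2(\frac32+\eps)}$ for $\rho>1$. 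The decay you want is delivered by the \emph{other} half of Lemma~\ref{lem.Almgren2}: the monotonicity of $r\mapsto r^{-2\lambda}\psi_{x_\circ}(r)$ with $\lambda=N(0^+,u,x_\circ)\ge\frac32$, which gives $\psi_{x_\circ}(r)\le (2r)^{2\lambda}\,\psi_{x_\circ}(\tfrac12)\le C\|u\|_{L^\infty(B_1)}^2\,r^{3}$ for every $r\in(0,\frac12)$ and every $x_\circ\in\Gamma(u)\cap B_{1/2}$, with no smallness threshold whatsoever.

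Consequently the ``main obstacle'' you single out --- uniformity of $r_\circ(\eps)$ in the base point --- does not arise, and the compactness/blow-up contradiction you propose to handle it is unnecessary (and, as sketched within the single solution $u$, would anyway only yield a constant depending on $u$ rather than only on $n$). Note also that this growth step uses only $N(0^+,u,x_\circ)\ge\frac32$, which holds at every free boundary point by Theorem~\ref{thm.blowupthm}; the spectral gap $(\frac32,2)$ is \emph{not} what makes the growth estimate uniform --- it is what gives the regular/degenerate dichotomy and, via upper semicontinuity, the openness of ${\rm Reg}(u)$, exactly as in your last paragraph and in the paper. With the growth estimate corrected as above, the remaining steps you outline (passing from $L^2$ spherical averages to a sup bound, interior gradient estimates at distance $d(x)={\rm dist}(x,\Gamma(u))$, and the gluing argument giving \eqref{eq.intro_gives2}) are essentially the paper's proof.
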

\begin{proof}
The classification result is an immediate consequence of Corollary~\ref{cor.strongconv} and Theorem~\ref{thm.blowupthm}.

For the optimal regularity, we observe that by Corollary~\ref{cor.strongconv}, since the sequence $u_r$ is uniformly bounded in $r$, for $x_\circ\in \Gamma(u)$,
\begin{equation}
\label{eq.intro_growthu}
\|u\|_{L^\infty(B_r)(x_\circ)} \le C \left(\ave_{\de B_r(x_\circ)} u^2\right)^{\frac12}\le C \|u\|_{L^\infty(B_1)}r^{\frac32},
\end{equation}
where in the last inequality we are using Lemma~\ref{lem.Almgren2}, together with the fact that, by Theorem~\ref{thm.blowupthm}, $N(0^+, u, x_\circ)\ge\frac32$. This establishes a uniform growth of the solution around free boundary points. Combined with interior estimates for harmonic functions, this yields that $u$ is $C^{1, 1/2}$ on the thin space, and thus $u\in C^{1, 1/2}(B_1^+)$ with estimates in $B_{1/2}^+$. 

Indeed, take $y \in \{x_{n+1} = 0\}\cap \{u > 0\}\cap B_{3/4}$, and let $r = {\rm dist}(y, \Gamma(u)) = {\rm dist}(y, y_\circ)$ for some $y_\circ\in \Gamma(u)$. Then $u$ is harmonic in $B_r(y)$, and by harmonic estimates together with \eqref{eq.intro_growthu} applied at $B_{2r}(y_\circ)$
\begin{equation}
\label{eq.intro_growthnablau}
\|\nabla_{x'} u\|_{L^\infty(B_{r/2}(y))} \le Cr^{-1}\|u\|_{L^\infty(B_r(y))}\le Cr^{-1}\|u\|_{L^\infty(B_{2r}(y_\circ))}\le C \|u\|_{L^\infty(B_1)}r^{\frac12}.
\end{equation}
Next for $y_1, y_2\in \{x_{n+1} = 0\}\cap B_{3/4}$ we want to obtain the bound 
\begin{equation}
\label{eq.intro_gives}
|\nabla_{x'} u(y_1) - \nabla_{x'} u(y_2)|\le C\|u\|_{L^\infty(B_1)}|y_1 - y_2|^{\frac12}
\end{equation}
to get $C^{1,1/2}$ regularity of $u$ on the thin space. Notice that, since $\nabla_{x'} u = 0$ on $\{x_{n+1} = 0\}\cap \{u = 0\}$, we can assume that $y_1,y_2\in \{x_{n+1} = 0\} \cap \{ u > 0\}\cap B_{3/4}$. 

Let us suppose  $r = {\rm dist}(y_1, \Gamma(u)) \ge {\rm dist}(y_2, \Gamma(u))$. Then, if ${\rm dist}(y_1, y_2) \le \frac{r}{2}$, and since $u$ is harmonic in $B_r(y_1)$, by harmonic estimates we have 
\[
\frac{|\nabla_{x'}u(y_1)- \nabla_{x'} u(y_2)|}{|y_1-y_2|^{1/2}}\le [\nabla_{x'} u ]_{C^{1/2}(B_{r/2}(y_1))}\le Cr^{-1/2}\|\nabla_{x'}u\|_{L^\infty(B_r(y_1))}\le C\|u\|_{L^\infty(B_1)}
\]
where in the last step we have used \eqref{eq.intro_growthnablau}. On the other hand, if ${\rm dist}(y_1, y_2) \ge \frac{r}{2}$, from \eqref{eq.intro_growthnablau} and ${\rm dist}(y_2, \Gamma(u)) \le r$, 
\begin{align*}
|\nabla_{x'}u(y_1)- \nabla_{x'} u(y_2)|& \le |\nabla_{x'}u(y_1)|+|\nabla_{x'} u(y_2)|\\
& \le C\|u\|_{L^\infty(B_1)}r^{1/2}\le C\|u\|_{L^\infty(B_1)} |y_1-y_2|^{1/2}. 
\end{align*}
In all, \eqref{eq.intro_gives} always holds, and $u$ is $C^{1, 1/2}$ on $\{x_{n+1} = 0\}$. By standard harmonic estimates, its harmonic extension to $B_1^+$ is also $C^{1,1/2}$ with estimates up to the boundary $\{x_{n+1} = 0\}$, which gives \eqref{eq.intro_gives2}.

Finally, we note that the functions $\Gamma(u) \ni x\mapsto N(r, u, x)$ continuous for every $r > 0$ fixed, and are monotone nondecreasing in $r>0$ (for $x$ fixed). Thus, $N(0^+, u, x) = \inf_{r > 0} N(r, u, x)$ is the infimum of a family of continuous functions, and therefore, it is upper semi-continuous. In particular, if ${\rm Deg}(u)\ni x_k\to x_\circ$, then $N(0^+, u, x_\circ) \ge \limsup_{k\to \infty} N(0^+, u, x_k) \ge 2$, and thus $x_\circ\in {\rm Deg}(u)$. The set of degenerate points closed, and the set of regular points is open (in the relative topology of the free boundary).
\end{proof}

\section{Regular points}
\label{sec.7}
We have shown that  the free boundary can be divided into two different sets: regular points, and degenerate points, according to the value of the frequency. 

As we will show next, the set of regular points is so called because the free boundary is smooth around them, \cite{ACS08}.  

Let 0 be a regular free boundary point, and consider the rescalings 
\begin{equation}
\label{eq.blowupseq}
u_r(x) = \frac{u(rx)}{\left(\ave_{\de B_r} u^2\right)^{\frac12}}.
\end{equation}
Since 0 is a regular point, by Theorem~\ref{thm.blowupthm}, there exists some sequence $r_j\downarrow 0$ such that, up to a rotation, 
\begin{equation}
\label{eq.hslimit}
u_{r_j}\to u_0:= {\rm Re} \left( (x_1 +i|x_{n+1}| )^{3/2} \right)\quad\textrm{strongly in $C^1(B_{1/2}^+)$}.
\end{equation}
Notice that, on the thin space, $u_0$ is a half-space solution of the form $u_0(x', 0) = c(x_1)_+^{3/2}$. In particular, the free boundary is a hyperplane (in $\{x_{n+1} = 0\}$) and thus smooth. We want to show that the smoothness of the free boundary in the limit is inherited by the approximating sequence, $u_{r_j}$, for $j$ large enough. 

Let us start by showing a sketch of the proof of the fact that the free boundary is Lipschitz. In the following proposition, $\mathcal{C}(\be_1, \theta)$ denotes a cone in the tangential directions with axis $\be_1$ and opening $\theta > 0$, that is
\[
\mathcal{C}(\be_1, \theta) := \left\{\tau \in \R^{n+1}: \tau_{n+1} = 0,\,\tau\cdot \be_1 \ge \cos(\theta)\|\tau\|\right\}.
\]
\begin{prop}
\label{prop.lipfb}
Let $u$ be a solution to \eqref{eq.TOP0}, and suppose that the origin is a regular free boundary point,  i.e. $0\in {\rm Reg(u)}$. Suppose also that \eqref{eq.hslimit} holds. 

Then, for any fixed $\theta_\circ > 0$, there exists some $\rho > 0$ such that 
\begin{equation}
\label{eq.lip0}
\de_{\tau} u  \ge 0\quad \textrm{ in $B_\rho$, for all $\tau\in \mathcal{C}(\be_1, \theta_\circ)$}.
\end{equation}
In particular, the free boundary is Lipschitz around regular points. That is, for some neighbourhood of the origin, $\Gamma(u)$ is the graph of a Lipschitz function $x_1 = f(x_2,\dots,x_n)$ in $\{x_{n+1} = 0\}$. 
\end{prop}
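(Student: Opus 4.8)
The plan is to upgrade the $C^1$-closeness in \eqref{eq.hslimit} to a genuine cone of monotonicity directions for $u$ itself near the origin. First I would record what the limit tells us: for the half-space solution $u_0 = \mathrm{Re}((x_1+i|x_{n+1}|)^{3/2})$ one computes $\de_\tau u_0 > 0$ in $B_1 \setminus \{x_1 \le 0, x_{n+1}=0\}$ for every $\tau$ in a cone $\mathcal{C}(\be_1,\theta_\circ')$ strictly wider than $\mathcal{C}(\be_1,\theta_\circ)$, with a quantitative lower bound $\de_\tau u_0 \ge c_0 > 0$ on, say, $\partial B_{1/2} \cap \{x_{n+1}=0,\ x_1 \ge \frac14\}$ and on a fixed-size region away from the contact set. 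So by the $C^1$ convergence in \eqref{eq.hslimit}, for $j$ large the rescaling $u_{r_j}$ satisfies $\de_\tau u_{r_j} \ge c_0/2 > 0$ on that same region, and in particular $\de_\tau u_{r_j} > 0$ at all points of $\{x_{n+1}=0\}$ that are in $B_{3/4}$ but at definite distance from $\Gamma(u_{r_j})$ and lie on the correct side.

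The core of the argument is then a maximum-principle / barrier propagation: I want to show $\de_\tau u_{r_j} \ge 0$ throughout all of $B_{1/2}$, not just away from the contact set. The function $v := \de_\tau u_{r_j}$ is harmonic in $B_{3/4} \setminus \Lambda(u_{r_j})$; on the contact set $\Lambda(u_{r_j})$ one has $v = \de_\tau u_{r_j} = 0$ because $u_{r_j} = 0$ there and $\tau$ is tangential (so $\nabla_{x'} u_{r_j}$ vanishes on $\Lambda$ by the $C^1$ regularity). Thus $v$ is, after even reflection, a function that is harmonic off the thin contact set, continuous, vanishes on $\Lambda(u_{r_j})$, and — this is the key structural input — its normal derivative $\de_{x_{n+1}}^+ v$ on $\{x_{n+1}=0\}\setminus\Lambda$ has a sign: since $\de_{x_{n+1}}^+ u_{r_j} \le 0$ there and we may commute the tangential derivative $\de_\tau$ (which preserves the thin space) past the boundary condition, one gets $\de_{x_{n+1}}^+ v \le 0$ as a measure, i.e. $-\Delta v \ge 0$ globally in $B_{3/4}$ in the distributional sense. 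Hence $v$ is superharmonic in $B_{3/4}$, so $\min_{\overline{B_{1/2}}} v$ is attained on $\partial B_{1/2}$; combining this with the positivity of $v$ on $\partial B_{1/2}$ (again from $C^1$-closeness to $u_0$, whose $\de_\tau u_0$ is strictly positive on $\partial B_{1/2}$ for $j$ large), we conclude $v \ge 0$ on $B_{1/2}$. Rescaling back gives \eqref{eq.lip0} with $\rho = r_j/2$ for suitable large $j$.

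From \eqref{eq.lip0} the Lipschitz graph property of $\Gamma(u)$ in a neighbourhood of the origin is a standard deduction: monotonicity of $u$ along every direction in the open cone $\mathcal{C}(\be_1,\theta_\circ)$ implies that the coincidence set $\Lambda(u) \cap \{x_{n+1}=0\}$ is, locally, a set whose complement is monotone under translations in those directions — so both $\Lambda(u)$ and its complement within the thin space are "cone-monotone" sets. A set in $\R^n$ and its complement that are both stable under a spatial cone of directions of opening $\theta_\circ$ must have common boundary given by a graph $x_1 = f(x_2,\dots,x_n)$ with $f$ Lipschitz, the Lipschitz constant controlled by $\cot\theta_\circ$; one also uses that $0 \in \Gamma(u)$ and that near $0$ the contact set is genuinely nonempty and genuinely has nonempty complement (both inherited from the half-space profile) so the graph is nondegenerate.

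I expect the main obstacle to be the propagation step — namely making rigorous that $v=\de_\tau u_{r_j}$ is globally superharmonic across the thin space and that its boundary values on $\partial B_{1/2}$ are strictly positive uniformly in $j$. The sign of the distributional Laplacian of $v$ requires care: one must justify differentiating the Signorini boundary condition $\de_{x_{n+1}}^+ u \le 0$ in a tangential direction at the level of measures, which is cleanest done via difference quotients $\tau_h u(x) := u(x+h\be)$ (with $\be$ tangential, so $\tau_h$ maps solutions to solutions of the same problem with translated obstacle — here zero obstacle, so it is again a solution, but on a slightly smaller domain — and $\Lambda(\tau_h u) \supseteq \Lambda(u) - h\be$ need not hold, so one works instead with the inequality $\de_{x_{n+1}}^+(u(\cdot+h\be) - u) \le 0$ only on the overlap of the two non-contact sets, which is exactly where one needs it) and then passing to the limit $h\to 0$. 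The uniform positivity on $\partial B_{1/2}$ is by contrast routine given the $C^1_{\rm loc}$ convergence, since $\de_\tau u_0$ is bounded below by a positive constant on the compact set $\partial B_{1/2}$ for $\tau$ ranging over the compact cone $\mathcal{C}(\be_1,\theta_\circ)$ once $\theta_\circ$ is fixed strictly below $\pi/2$.
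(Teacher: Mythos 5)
Your propagation step rests on a false structural claim, and this is a genuine gap. You assert that $v=\de_\tau u_{r_j}$ is globally superharmonic because the Signorini condition $\de_{x_{n+1}}^+u\le 0$ can be differentiated tangentially "as a measure". But $\Delta v=\de_\tau\big(\Delta u_{r_j}\big)$, and $\Delta u_{r_j}=2\,\de^+_{x_{n+1}}u_{r_j}\,\mathcal{H}^n\mres\Lambda(u_{r_j})$ is a non-positive measure concentrated on the contact set; a tangential derivative of such a measure has no sign, because differentiating moves mass across the free boundary. In fact the sign is the opposite of the one you need: for the model blow-up $u_0={\rm Re}\big((x_1+i|x_{n+1}|)^{3/2}\big)$ a direct computation gives $\Delta(\de_{x_1}u_0)=c\,|x_1|^{-1/2}\,\mathcal{H}^n\mres\big(\{x_1<0\}\cap\{x_{n+1}=0\}\big)\ge 0$, i.e.\ the tangential derivative is \emph{sub}harmonic. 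This is also forced by the minimum principle: $\de_\tau u_{r_j}$ vanishes on $\Lambda(u_{r_j})\cap B_{1/2}$, which is nonempty since $0$ is a free boundary point, so it cannot be a superharmonic function with strictly positive data on $\de B_{1/2}$ --- the conclusion of your own argument contradicts a fact you record earlier. Relatedly, the boundary positivity you call routine also fails: $\de_\tau u_0$ is \emph{not} bounded below by a positive constant on $\de B_{1/2}$, since it vanishes on $\de B_{1/2}\cap\Lambda(u_0)$; the $C^1$ convergence therefore only yields $\de_\tau u_{r_j}\ge -\eps_j$ near the contact set, not strict positivity there.

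The paper's proof avoids any global sign for $\Delta(\de_\tau u_{r_j})$. It splits $B_{3/4}$ by a slab: outside $\{|x_{n+1}|\le\sigma\}$ the uniform $C^1$ convergence gives $\de_\tau u_{r_j}\ge 0$, together with the quantitative bound $\de_\tau u_{r_j}\ge c(\theta_\circ)>0$ on $\{|x_{n+1}|\ge\frac12\}$; inside the slab only the bound $\de_\tau u_{r_j}\ge -C\sigma^{1/2}$ is available, and it comes from the optimal $C^{1,1/2}$ estimate, not from convergence. The positivity is then propagated by a comparison/barrier argument in the slit domain $B_{3/4}\setminus\Lambda(u_{r_j})$, using only that $\de_\tau u_{r_j}$ is harmonic there and vanishes on $\Lambda(u_{r_j})$ (this is \cite[Lemma 5]{ACS08}): choosing $\sigma=\sigma(\theta_\circ)$ small, a negative contribution of size $\sigma^{1/2}$ on the slab cannot overcome the definite positivity coming from $\{|x_{n+1}|\ge\frac12\}$. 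That barrier lemma is the missing ingredient that must replace your superharmonicity claim. The second half of your argument --- deducing the Lipschitz graph property from the cone monotonicity \eqref{eq.lip0} via the two cones $\pm\,\mathcal{C}(\be_1,\theta_\circ)$, using $u\ge 0$ on the thin space and $u(0)=0$ --- is essentially the paper's argument and is fine.
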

\begin{proof}[Sketch of the proof]
We use that $\de_\tau u_{r_j}$ is converging to $\de_\tau u_0$ uniformly in $B_{1/2}$. Notice that, by assumption, $\de_\tau u_0\ge 0$, and in fact, $\de_\tau u_0 \ge c(\theta_\circ, \sigma) > 0$ in $\{|x_{n+1}| > \sigma\}$.

Thus, from the uniform convergence, for any $\sigma  > 0$ there exists some $r_\circ = r_\circ(\theta_\circ, \sigma)$ such that, if $r_j \le r_\circ$, 
\begin{equation}
\label{eq.lip1}
\begin{array}{ll}
\de_\tau u_{r_j}  \ge 0 & \textrm{ in } B_{3/4}\setminus \{|x_{n+1}|\ge \sigma\}\\
\de_\tau u_{r_j}  \ge c(\theta_\circ) > 0& \textrm{ in } B_{3/4}\setminus \left\{|x_{n+1}|\ge \textstyle{\frac12}\right\}.
\end{array}
\end{equation}

Moreover, from the optimal $C^{1,\frac12}$ regularity of solutions,
\begin{equation}
\label{eq.lip2}
\de_\tau u_{r_j} \ge - c \sigma^{\frac12}\quad \textrm{ in } B_{3/4}\cap \{|x_{n+1} \le \sigma\}.
\end{equation}

Combining \eqref{eq.lip1}-\eqref{eq.lip2} with the fact that $\Delta (\de_\tau u_{r_j}) = 0$ in $B_1\setminus \Lambda(u_{r_j})$, and $\de_\tau u_{r_j} = 0$ on $\Lambda(u_{r_j})$, by comparison principle arguments we deduce that there exist some $\sigma_\circ = \sigma_\circ(\theta_\circ)$ such that if $\sigma < \sigma_\circ$, $\de_\tau u_{r_j} \ge 0$ in $B_{1/2}$ (see \cite[Lemma 5]{ACS08} for a proper justification of this step). In particular, there exists some $\rho$ (depending only on $\theta_\circ$, but also depending on the regular point) such that $\de_\tau u_{\rho} \ge 0$ in $B_1$. Thus, \eqref{eq.lip0} holds. 

We finish by showing that \eqref{eq.lip0} implies that the free boundary is Lipschitz. We do so by considering the two (half) cones
\[
\Sigma_\pm := \pm \mathcal{C}(\be_1, \theta_\circ)\cap B_{\rho/2}.
\]

Notice that, since $0\in \Gamma(u)$, $u(0) = 0$, and from $u \ge 0$ on $\{x_{n+1} = 0\}$ together with \eqref{eq.lip0} we must have $u \equiv 0$ on $\Sigma_-$, so $\Sigma_-\subset \{u = 0\}$.

On the other hand, suppose that $y_\circ\in \Sigma_+$ is such that $u(y_\circ) = 0$. Again, by \eqref{eq.lip0} and the non-negativity of $u$ on the thin space, we have $u \equiv 0$ on $y_\circ - \mathcal{C}(\be_1, \theta_\circ)$. But notice that, since $y_\circ\in \Sigma_+$, $0\in y_\circ - \mathcal{C}(\be_1, \theta_\circ)$, that is, $0$ is not a free boundary point. A contradiction. Therefore, we have that $u(y_\circ) > 0$, so $\Sigma_+\subset \{u > 0\}$. 

Thus, the free boundary at 0 has a cone touching from above and below, and therefore, it is Lipschitz at the origin. We can do the same at the other points around it, so that the free boundary is Lipschitz. 
\end{proof}

In fact, the previous proof not only shows that the free boundary is Lipschitz, but letting $\theta_\circ\downarrow 0$ we are showing that it is basically $C^1$ at 0. In order to upgrade the regularity of the free boundary around regular points we use the following boundary Harnack principle. 

\begin{thm}[Boundary Harnack Principle, \cite{ACS08, DS19}]
\label{thm.bhp}
Let $\Omega\subset \{x_{n+1} = 0\}\cap B_1$ be any open set on the thin space, and let $v_1, v_2\in C(B_1)$ satisfying $\Delta v_1 = \Delta v_2 = 0$ in $B_1\setminus \Omega$. Assume, moreover, that $v_1$ and $v_2$ vanish continuously on $\Omega$, and $v_1, v_2 > 0$ in $B_1\setminus \Omega$. Then, there exists some $\alpha > 0 $ such that $\frac{v_1}{v_2}$ is $\alpha$-H\"older continuous in $B_{1/2}\setminus \Omega$ up to $\Omega$. 
\end{thm}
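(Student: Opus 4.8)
The plan is to prove this boundary Harnack principle by the classical method of comparison with barriers, adapted to the thin-obstacle geometry where the ``boundary'' $\Omega$ is a subset of the codimension-one hyperplane $\{x_{n+1}=0\}$ rather than the boundary of a smooth domain. The key structural point is that although $\Omega$ is thin, the functions $v_1, v_2$ are harmonic in the full-dimensional open set $B_1\setminus\Omega$ and vanish on $\Omega$; since $\Omega$ lies in a hyperplane, the set $B_1\setminus\Omega$ is, locally near any point of $\Omega$, a domain which one can think of as $B_1$ minus a slit. The even symmetry of the relevant solutions plays no role in the statement itself, but one uses crucially that points of $\{x_{n+1}=0\}\setminus\Omega$ allow $v_1, v_2$ to be positive and (by even reflection across the non-contact part of the thin space, where the Neumann condition holds) harmonic in a full neighbourhood.

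First I would establish the two one-sided bounds that together give H\"older oscillation decay of the quotient. The engine is a \emph{comparison/Carleson-type estimate}: there is a constant $C=C(n)$ so that for any point $x_0\in\Omega\cap B_{3/4}$ and any $r$ with $B_r(x_0)\subset B_1$, one has $\sup_{B_{r/2}(x_0)\setminus\Omega} v_i \le C\, v_i(A_r(x_0))$, where $A_r(x_0)$ is a ``corkscrew'' point at scale $r$, i.e. a point at distance comparable to $r$ from both $x_0$ and from $\Omega$. This follows from the interior Harnack inequality applied along a Harnack chain together with the growth control near the thin set: because $v_i$ is harmonic away from a hyperplanar piece and vanishes there, and because the half-space solution $\mathrm{Re}((x_1+i|x_{n+1}|)^{3/2})$ (or more simply the function $|x_{n+1}|$, which is harmonic off $\{x_{n+1}=0\}$ and vanishes there) provides an explicit supersolution, one gets that $v_i$ cannot grow faster than a fixed power when moving toward $\Omega$, and cannot be much larger anywhere in $B_{r/2}(x_0)$ than its value at the corkscrew point.

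The second ingredient is the \emph{oscillation lemma}: if $v_i \ge 0$ is harmonic in $B_1\setminus\Omega$, vanishes on $\Omega$, and satisfies $v_i \le 1$ in $B_1$ with $v_i(A_1(0)) \ge c_0 > 0$, then in a smaller ball $B_\theta$ one controls $v_i$ both above and below by multiples of a single profile, so that the ratio $v_1/v_2$ has its oscillation over $B_\theta\setminus\Omega$ reduced by a fixed factor $\mu<1$ compared to its oscillation over $B_1\setminus\Omega$. One proves this by subtracting: $w_\pm := v_1 - t_\pm v_2$ for suitable $t_\pm$ are harmonic off $\Omega$, vanish on $\Omega$, and one uses the Carleson estimate plus a weak Harnack / interior estimate to show that controlling $w_\pm$ at one corkscrew point controls its sign in a smaller ball. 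Iterating this dyadically along the scales $\theta^k r$ yields that $v_1/v_2$ is $\alpha$-H\"older at every point of $\Omega\cap B_{1/2}$ with a uniform exponent $\alpha = \alpha(n) = \log\mu/\log\theta$ and uniform constant; combined with interior estimates for the quotient of harmonic functions away from $\Omega$, this gives H\"older continuity of $v_1/v_2$ on all of $\overline{B_{1/2}}\setminus\Omega$ up to $\Omega$.

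The main obstacle is the irregularity of $\Omega$: it is merely an arbitrary open subset of the hyperplane, so near its relative boundary $\Gamma$ the geometry of $B_1\setminus\Omega$ can be complicated, and one does not have a clean interior-corkscrew structure there. The resolution — which is why one invokes \cite{ACS08, DS19} — is that the thin geometry is rigid enough: the complement of a hyperplanar piece always satisfies uniform exterior and interior corkscrew conditions relative to the thin set at \emph{every} point of $\Omega$ (one can always find a nearby point with $x_{n+1}\ne 0$), and the Neumann reflection across $\{x_{n+1}=0\}\setminus\Omega$ lets the Harnack chains pass through the non-contact part of the thin space. So the genuinely delicate estimate is the growth bound near $\Gamma$, where the half-space barrier $r^{3/2}$-growth must be shown to be an upper bound for $v_i$; granting that (and it is exactly the content of the Carleson estimate in this setting), the rest is the standard oscillation-iteration argument. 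I would present the barrier construction and the Carleson estimate as the heart of the proof and treat the iteration as routine.
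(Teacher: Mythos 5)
The paper does not prove this theorem: it is imported verbatim from the literature and cited as \cite{ACS08, DS19}, so there is no internal proof to compare against, and your sketch has to be judged on its own terms. Its architecture (a Carleson-type estimate at points of the slit, then an oscillation-decay lemma for $v_1-t_\pm v_2$, then dyadic iteration) is the classical route and is close in spirit to the comparison estimates of \cite{ACS08}; the second cited proof (De Silva--Savin) is a different, shorter iteration run directly on the pair $v_1,v_2$ that avoids the Carleson step. So the plan is not unreasonable. However, as written it has genuine gaps at exactly the two points you label as the ``heart'' and then assert rather than prove.

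First, the symmetry issue. You state that the even symmetry in $x_{n+1}$ ``plays no role,'' but the domain $B_1\setminus\Omega$ fails the Harnack chain condition across the slit (joining $(x',\eps)$ to $(x',-\eps)$ above and below a point of $\Omega$ may require a chain of length $\sim\log(1/\eps)$), so it is not an NTA domain, and your Carleson estimate --- the two-sided supremum over $B_{r/2}(x_0)\setminus\Omega$ controlled by the value at a \emph{single} corkscrew point with a dimensional constant --- is false in this generality. In fact, without evenness the conclusion of the theorem itself fails as a two-sided statement: in $\R^2$ take $\Omega$ a segment of the thin space, $v_2$ the solution with data $1$ on $\partial B_1$ and $0$ on $\Omega$, and $v_1$ the solution with data $M$ on the lower half circle, $1$ on the upper one, $0$ on $\Omega$. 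Writing $v_1=w_u+Mw_l$, $v_2=w_u+w_l$ and noting that $w_u-w_l$ is odd in $x_{n+1}$, vanishes on the whole diameter and is positive above it, Hopf's lemma gives that the one-sided normal derivatives of $w_u$ and $w_l$ at an interior point of the slit are different; hence $v_1/v_2$ has distinct limits from above and below, and its H\"older seminorm on $B_{1/2}\setminus\Omega$ is infinite. So any correct proof must either assume evenness (as holds in the application, where $v_i=\partial_\tau u$ with $u$ even) or prove a one-sided statement; your outline uses neither, and the reflection remark you make is not a substitute (harmonicity across $\{x_{n+1}=0\}\setminus\Omega$ is already part of the hypothesis and does not repair the chains across $\Omega$).

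Second, the growth/decay control near $\Omega$ is asserted circularly. The barrier $|x_{n+1}|$ (or the $3/2$-homogeneous half-space solution, which is in any case not the relevant universal rate for arbitrary slits) vanishes on the \emph{whole} hyperplane, while $v_i$ vanishes only on $\Omega$; to run a comparison in a box around $x_0\in\Omega$ you must first control $v_i$ on $(\{x_{n+1}=0\}\setminus\Omega)$ or on the lateral boundary of the box, which is precisely the localization you are trying to establish. Moreover, since $\Omega$ is an arbitrary subset of the thin space, there is no uniform capacity-density (hence no uniform boundary H\"older decay) at points of $\Omega$ close to its relative boundary, so the standard CFMS machinery cannot be quoted as routine: handling arbitrary slits with constants independent of $\Omega$ is the actual technical content of \cite{ACS08} and of the De Silva--Savin argument, where positivity of \emph{both} functions and evenness are exploited to compare the pair directly instead of deriving a one-function Carleson estimate from NTA-type geometry. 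In summary: the skeleton is the right kind of skeleton, but the two estimates on which everything rests are missing, and one of them is false in the generality in which you formulate it.
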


As a consequence, we can show that the Lipschitz part of the free boundary is, in fact, $C^{1,\alpha}$. 

\begin{thm}[$C^{1,\alpha}$ regularity of the free boundary around regular points]
\label{thm.C1a}
Let $u$ be a solution to \eqref{eq.TOP0}. Then, the set of regular points, ${\rm Reg}(u)$, is locally a $C^{1,\alpha}$ $(n-1)$-dimensional manifold.  
\end{thm}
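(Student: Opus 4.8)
The plan is to upgrade the $C^1$-type regularity of the free boundary near a regular point to $C^{1,\alpha}$ by applying the boundary Harnack principle (Theorem~\ref{thm.bhp}) to a suitable family of directional derivatives of $u$. By Proposition~\ref{prop.lipfb}, near a regular point (say the origin, after rotation) the free boundary $\Gamma(u)$ is the graph of a Lipschitz function $x_1 = f(x_2,\dots,x_n)$, and moreover $\partial_\tau u \ge 0$ in a ball $B_\rho$ for every $\tau$ in a tangential cone $\mathcal{C}(\be_1,\theta_\circ)$. First I would fix two competitors: for a tangential unit vector $e$ close to $\be_1$ (so that $e \in \mathcal{C}(\be_1,\theta_\circ)$ with room to spare), take $v_1 = \partial_e u$ and $v_2 = \partial_{\be_1} u$. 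Both are harmonic in $B_\rho \setminus \Lambda(u)$, both vanish continuously on the coincidence set $\Omega := \Lambda(u) \cap B_\rho$ (since $\nabla_{x'} u = 0$ there, and the even symmetry handles the $x_{n+1}$ direction appropriately for tangential derivatives), and both are nonnegative in $B_\rho$; strict positivity in $B_{\rho/2}\setminus \Omega$ follows from the strong maximum principle together with \eqref{eq.lip1}, since these derivatives are not identically zero. Thus Theorem~\ref{thm.bhp} applies and gives that $\partial_e u / \partial_{\be_1} u$ is $C^{0,\alpha}(\overline{B_{\rho/4}\setminus\Omega})$.

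Next I would extract the regularity of $f$ from the Hölder continuity of these ratios. The key point is that the unit normal to the level set $\{u(\cdot,0) = 0\}$ inside the thin space — equivalently the normal to the graph of $f$ — can be written, at points of $\Gamma(u)$, as a normalization of the vector whose components are the $\partial_{e_i} u / \partial_{\be_1} u$ for a basis $e_2,\dots,e_n$ of tangential directions (these ratios extend continuously up to $\Gamma(u)$ and encode $\nabla f$ via the implicit function theorem, since $\partial_{\be_1} u \to c(x_1)_+^{1/2}\cdot(\text{positive})$ transversally to the free boundary). Since each ratio is $\alpha$-Hölder up to $\Omega$, and in particular along $\Gamma(u)$, the normal direction to $\Gamma(u)$ depends $\alpha$-Hölder continuously on the point, which is exactly the statement that $\Gamma(u)$ is a $C^{1,\alpha}$ manifold; equivalently $\nabla f \in C^{0,\alpha}$. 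One should also record that this argument is purely local and translation-covariant: repeating it at every regular point in a neighborhood (using that $\mathrm{Reg}(u)$ is open in $\Gamma(u)$ and that the opening $\theta_\circ$ and hence the cone of admissible directions can be taken uniform on compact subsets) yields the manifold statement globally on $\mathrm{Reg}(u)$, with $(n-1)$ being the dimension since $\Gamma(u)$ lives in the $n$-dimensional thin space and separates it locally.

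The main obstacle I expect is verifying the hypotheses of the boundary Harnack principle cleanly — specifically, that the relevant directional derivatives are \emph{strictly} positive in $B_{\rho/2}\setminus\Omega$ and vanish \emph{continuously} on $\Omega$, uniformly enough that the constant $\alpha$ and the Hölder seminorm in Theorem~\ref{thm.bhp} can be controlled on a fixed neighborhood. Continuous vanishing on $\Omega$ is delicate because $\Omega$ itself is only known a priori to have Lipschitz boundary, so one must make sure the version of the boundary Harnack quoted (from \cite{ACS08, DS19}) is robust with respect to the regularity of $\Omega$ — and indeed the cited results are designed for exactly this thin setting with rough coincidence sets, which is why invoking them as a black box is legitimate. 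A secondary technical point is the passage from ``all ratios $\partial_e u/\partial_{\be_1} u$ Hölder'' to ``$\nabla f$ Hölder'': this requires knowing that $\partial_{\be_1} u$ is bounded below by a positive multiple of $\mathrm{dist}(\cdot,\Gamma(u))^{1/2}$ near the free boundary, so that dividing by it is legitimate and the implicit function theorem applies with the expected modulus; this lower bound comes from the nondegeneracy of the blow-up $u_0 = \mathrm{Re}((x_1+i|x_{n+1}|)^{3/2})$ together with the $C^1$-closeness in \eqref{eq.hslimit} and the cone monotonicity \eqref{eq.lip0}.
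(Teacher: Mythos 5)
Your proposal is correct and follows essentially the same route as the paper: apply the boundary Harnack principle (Theorem~\ref{thm.bhp}) to pairs of directional derivatives taken in the monotonicity cone from Proposition~\ref{prop.lipfb} (the paper uses $\bar\tau=\frac{1}{\sqrt2}(\be_1+\be_i)$ with $\theta_\circ=\pi/4$, which is the same trick as your ``$e$ close to $\be_1$''), deduce H\"older continuity of the ratios $\de_{\be_i}u/\de_{\be_1}u$ up to $\Lambda(u)$, and conclude that the normal to the free boundary is H\"older. The only cosmetic difference is that the paper sidesteps the nondegeneracy lower bound on $\de_{\be_1}u$ you flag: it writes the unit normal to the level sets $\{u=t\}$ purely as a normalized function of the H\"older ratios, $\nu_i=(\de_{\be_i}u/\de_{\be_1}u)\big(1+\sum_j(\de_{\be_j}u/\de_{\be_1}u)^2\big)^{-1/2}$, and lets $t\downarrow 0$, so no quantitative growth of $\de_{\be_1}u$ is needed.
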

\begin{proof}
We apply Theorem~\ref{thm.bhp} to the right functions. By Proposition~\ref{prop.lipfb} we already know that around regular points, the free boundary is a Lipschitz $(n-1)$-dimensional manifold. 

Let us suppose 0 is a regular point. Take $\bar \tau = \frac{1}{\sqrt{2}}\left(\be_1 + \be_i\right)$ with $i\in \{2,\dots,n\}$, and notice that in $B_\rho$ such that \eqref{eq.lip0} holds (with $\theta_\circ= \pi/4$) we have that $v_1 := \de_{\be_1} u$ and $v_2 := \de_{\bar\tau} u$ are positive harmonic functions, vanishing continuously on $\Omega := \Lambda(u)\cap B_{\rho}$, by Proposition~\ref{prop.lipfb}. Thus, $v_1/v_2$ is H\"older continuous, which implies that $\de_{\be_i} u/\de_{\be_1} u$ is H\"older continuous, up to $\Lambda(u)$, in $B_\rho$.

We finish by noticing that, if we take $x\in \{x_{n+1} = 0\}$ such that $u(x) = t$, then $\nu(x)$ denotes the unit normal vector to the level set $\{u = t\}$ on the thin space,  where
\[
\nu_i(x) := \frac{\de_{\be_i} u}{|(\de_{\be_1} u,\dots,\de_{\be_n} u)|} = \frac{\de_{\be_i} u/\de_{\be_1} u}{\left(1+ \sum_{i= 2}^n (\de_{\be_i} u/\de_{\be_1} u)^2\right)^{1/2}}.
\]
Thus, $\nu = (\nu_1,\dots,\nu_n)$ is H\"older continuous. In particular, letting $t\downarrow 0$ we obtain that the normal vector to the free boundary is H\"older continuous, and therefore, the free boundary is $C^{1,\alpha}$ in $B_{\rho/2}$. 
\end{proof}

It is possible to keep iterating a higher order boundary Harnack principle to obtain higher order free boundary regularity estimates around regular points. Hence, Theorem~\ref{thm.bhp} also has a higher order analogy.

\begin{prop}[Higher order Boundary Harnack Principle, \cite{DS16}]
\label{prop.hobhp}
Let $\Omega\subset \{x_{n+1} = 0\}\cap B_1$ be a $C^{k,\alpha}$ domain on the thin space for $k\ge 1$, and let $v_1, v_2\in C(B_1)$ satisfying $\Delta v_1 = \Delta v_2 = 0$ in $B_1\setminus \Omega$. Assume, moreover, that $v_1$ and $v_2$ vanish continuously on $\Omega$, and $v_2 > 0$ in $B_1\setminus \Omega$. Then, $\frac{v_1}{v_2}$ is $C^{k,\alpha}$ in $B_{1/2}\setminus \Omega$ up to $\Omega$. 

Moreover, if $U_0(x') = \sqrt{{\rm dist}(x', \Omega)}$, and $v_1$ is even in $x_{n+1}$, then $\frac{v_1}{U_0}$ is $C^{k-1,\alpha}$ in $B'_{1/2}\setminus \Omega$ up to $\Omega$.
\end{prop}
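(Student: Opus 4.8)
The statement is the higher-order boundary Harnack for slit domains of De Silva--Savin; I would prove it by reducing to the model flat slit and running a Schauder-type iteration whose base case is the first-order boundary Harnack, Theorem~\ref{thm.bhp}.

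\emph{Reduction and goal.} Since $\Omega$ is a $C^{k,\alpha}$ domain inside the hyperplane $\{x_{n+1}=0\}$, a $C^{k,\alpha}$ diffeomorphism $\Phi$ that fixes $\{x_{n+1}=0\}$ and is even in $x_{n+1}$ straightens, near the origin, $\Omega$ to the model slit $\Omega_0:=\{x_{n+1}=0,\ x_n\le 0\}$; under $\Phi$ the Laplacian becomes a divergence-form operator $Lu=\mathrm{div}(A(x)\nabla u)$ with $A\in C^{k-1,\alpha}$, $A(0)=\mathrm{Id}$, uniformly elliptic, with $Lv_1=Lv_2=0$ off $\Omega_0$ and $v_i$ vanishing continuously on $\Omega_0$. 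Denote by $V_0:=\mathrm{Re}\big((x_n+i|x_{n+1}|)^{1/2}\big)$ the model $\tfrac12$-homogeneous harmonic profile, whose trace on the thin space is $\sqrt{\mathrm{dist}(\cdot,\Omega_0)}$. Comparing $v_2$ with $V_0$ via the $k=0$ case (Theorem~\ref{thm.bhp} for $L$, which follows from Theorem~\ref{thm.bhp} by a routine perturbation argument) gives $c\,V_0\le v_2\le C\,V_0$ near $\Omega_0$. The target is the pointwise expansion: for every $z\in\Omega_0$ there is a polynomial $P_z$ of degree $k$, with uniformly bounded and $C^{0,\alpha}$-in-$z$ coefficients, such that $|v_1-v_2P_z|\le C\,v_2(x)\,|x-z|^{k+\alpha}$ on $B_{1/2}$; together with interior Schauder estimates for $v_1/v_2$ away from $\Omega_0$, this is equivalent to $v_1/v_2\in C^{k,\alpha}$ up to $\Omega_0$, and undoing $\Phi$ yields the first assertion.

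\emph{The iteration.} I would build the expansion by induction on the order $m\in\{0,1,\dots,k\}$, the case $m=0$ being Theorem~\ref{thm.bhp}. For the inductive step one normalizes so that $v_1-v_2P_m$ is small relative to $r^{\,m+1/2}$ on $B_r$, rescales $w_r(x):=r^{-(m+1/2)}\,(v_1-v_2P_m)(rx)/\eta_r$ (with $\eta_r$ the corresponding $L^\infty$ size), and argues by compactness and contradiction. If the expected decay failed along a sequence $r_j\downarrow 0$, then by interior elliptic estimates away from $\Omega_0$ and the first-order boundary Harnack near $\Omega_0$ the $w_{r_j}$ are locally equibounded and equicontinuous, hence converge along a subsequence to a nontrivial global solution $w_0$ of $\Delta w_0=0$ in $\R^{n+1}\setminus\Omega_0$, even in $x_{n+1}$, vanishing on $\Omega_0$, with growth at most $|x|^{\,m+1/2+\alpha}$. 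Crucially, since the ansatz carries the geometry through $v_2$ rather than $V_0$, the remainder $v_1-v_2P_m$ vanishes on $\Omega_0$ at the same rate as $v_2$, so $w_0$ has half-integer homogeneity. The non-constant coefficients enter only through the forcing $L(v_2P_m)=2A\nabla v_2\cdot\nabla P_m+v_2\,LP_m$ and the Taylor remainders of $A$; because $A\in C^{k-1,\alpha}$ these are genuinely lower order at small scales, and one enlarges the ansatz $P_m$ to absorb the leading coefficient (equivalently, geometric) corrections, exactly as in Schauder theory for operators with H\"older coefficients.

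\emph{Liouville classification, conclusion, and main difficulty.} The crux is that such a $w_0$ equals $V_0$ times a polynomial of degree $\le m$ (equivalently, has Almgren frequency $\le m+\tfrac12$), whence it is absorbable into $P_{m+1}$, contradicting the normalization. This follows by separating variables in the $(x_n,x_{n+1})$-plane: the homogeneous even solutions of $\Delta u=0$ off $\Omega_0$ vanishing on $\Omega_0$ occur only at homogeneities $\tfrac12,\tfrac32,\tfrac52,\dots$ (plus integer homogeneities $\ge 2$ of the form $x_{n+1}^2\times(\text{harmonic})$, which the half-integer homogeneity of $w_0$ excludes), the $(\ell+\tfrac12)$-homogeneous ones being $V_0$ times a degree-$\ell$ tangential polynomial up to harmonic corrections; the monotonicity of Almgren's frequency (Lemma~\ref{lem.Almgren}, adapted to the slit) forbids intermediate growth, while Theorem~\ref{thm.bhp} provides the compactness and non-degeneracy on $\Omega_0$ used in the blow-up. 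This proves the first assertion. For the second, apply the first part with $v_1$ even and $v_2$ a fixed positive even solution whose non-singular factor $v_2/V_0$ is $C^{k-1,\alpha}$ and bounded below --- it loses exactly one derivative of boundary regularity, since it encodes the curvature of $\partial\Omega$, as one checks from the flattened model or by the same iteration with $V_0$ in place of $v_2$ --- so that on the thin space $v_1/U_0=(v_1/v_2)(v_2/U_0)\in C^{k-1,\alpha}$. The main difficulty is to carry out the blow-up simultaneously with the $V_0$-singularity on $\Omega_0$ and with non-constant coefficients: one must choose the correct homogeneity-weighted normalization, verify that the coefficient/geometric corrections are genuinely lower order (here the full $C^{k,\alpha}$ regularity of $\partial\Omega$ is consumed), and invoke the Liouville classification, which itself rests on Almgren's monotonicity and on the first-order boundary Harnack.
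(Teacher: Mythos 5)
The paper itself gives no proof of this proposition: it is quoted directly from De Silva--Savin \cite{DS16}, so there is no internal argument to compare yours against. Your sketch is nevertheless a recognizable and legitimate route to the result, but it differs in method from the cited proof. In \cite{DS16} the argument is perturbative rather than compactness-based: after straightening the boundary inside the thin space, the authors construct explicit families of comparison functions of the form $U_0P$ (half-space model profile times tangential polynomial) plus controlled corrections, and prove a pointwise improvement of the polynomial approximation of $v_1/v_2$ (and of $v/U_0$) by barriers and the comparison principle, iterating across dyadic scales; your scheme replaces the barriers by a blow-up/contradiction argument together with a Liouville classification of global even harmonic functions vanishing on the slit, with Almgren monotonicity ruling out intermediate growth. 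Both routes are viable; the barrier route of \cite{DS16} avoids any Liouville theorem and gives directly quantitative constants, while your compactness route is softer but pushes all the work into two places that your sketch only gestures at.

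Those two places are where the genuine gaps sit. First, in the inductive step you assert that the blow-up limit $w_0$ ``has half-integer homogeneity''; this is not automatic (a limit of rescalings need not be homogeneous), and what is actually needed is a Liouville statement: any global even solution in $\R^{n+1}\setminus\Omega_0$, vanishing on $\Omega_0$, with growth $O(|x|^{m+1/2+\alpha})$, is a finite combination of the half-integer homogeneous solutions of degree at most $m+\frac12$, each of the form $V_0$ times a tangential polynomial plus explicit corrections. Your parenthetical about integer homogeneities ``of the form $x_{n+1}^2\times(\text{harmonic})$'' is spurious (such products are not harmonic), and more importantly the absorption of the variable-coefficient forcing $L(v_2P_m)$ at the critical rate --- which is where the full $C^{k,\alpha}$ regularity of $\partial\Omega$ is consumed --- is precisely the technical heart of the proof and is only asserted. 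Second, your derivation of the last claim is circular as written: factoring $v_1/U_0=(v_1/v_2)(v_2/U_0)$ on the thin space requires knowing $v_2/U_0\in C^{k-1,\alpha}$, which is an instance of the very statement being proven; the correct fix, which you mention in passing, is to run the same iteration with the ansatz $U_0\cdot(\text{polynomial})$ in place of $v_2\cdot(\text{polynomial})$, and there the distance function to a $C^{k,\alpha}$ hypersurface of the thin space enters the error terms and produces the one-derivative loss. That argument has to be carried out, not inferred from the first part.
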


And from the higher order Boundary Harnack Principle we can deduce higher order regularity of the free boundary (at regular points). 

\begin{cor}[$C^{\infty}$ regularity of the free boundary around regular points]
Let $u$ be a solution to \eqref{eq.TOP0}. Then, the set of regular points, ${\rm Reg}(u)$, is locally a $C^{\infty}$ $(n-1)$-dimensional manifold.  
\end{cor}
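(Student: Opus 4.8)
The plan is to bootstrap the $C^{1,\alpha}$ regularity of Theorem~\ref{thm.C1a} by iterating the higher order boundary Harnack principle of Proposition~\ref{prop.hobhp}, in exactly the way Theorem~\ref{thm.C1a} used the first order principle of Theorem~\ref{thm.bhp} a single time. I would fix a regular point, taken to be the origin after a translation and the rotation of \eqref{eq.hslimit}, so that the blow-up is ${\rm Re}\bigl((x_1+i|x_{n+1}|)^{3/2}\bigr)$; by Theorem~\ref{thm.C1a} together with the openness of ${\rm Reg}(u)$ in $\Gamma(u)$, there is $\rho_0>0$ with $\Gamma(u)\cap B_{\rho_0}\subset{\rm Reg}(u)$ and $\Lambda(u)\cap B_{\rho_0}$ a $C^{1,\alpha}$ domain inside the thin space $\{x_{n+1}=0\}$. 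The induction hypothesis I would carry is: for some $k\ge 1$, $\Lambda(u)$ is a $C^{k,\alpha}$ domain in a (possibly smaller) neighbourhood of the origin; the case $k=1$ is Theorem~\ref{thm.C1a}.

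For the inductive step, with $\Omega:=\Lambda(u)\cap B_\rho$ a $C^{k,\alpha}$ domain in the thin space, I would set $v_1:=\de_{\be_i}u$ and $v_2:=\de_{\be_1}u$ for $i\in\{2,\dots,n\}$, where $\be_1$ is the axis of the monotonicity cone of Proposition~\ref{prop.lipfb}. The hypotheses of Proposition~\ref{prop.hobhp} then need to be verified: both $v_1,v_2$ are even in $x_{n+1}$ and harmonic in $B_\rho\setminus\Omega$ — here one uses that away from $\Lambda(u)$ the obstacle is inactive, so by evenness and the $C^1$ regularity up to the thin space $\de^+_{x_{n+1}}u=0$ and $u$ is in fact harmonic across $\{x_{n+1}=0\}\setminus\Lambda(u)$; both vanish continuously on $\Omega$, since $u\equiv 0$ and $u\ge 0$ on $\Lambda(u)$ force $\nabla_{x'}u=0$ there; and $v_2=\de_{\be_1}u>0$ in $B_\rho\setminus\Omega$, because by Proposition~\ref{prop.lipfb} it is nonnegative and harmonic off $\Lambda(u)$, it is not identically zero (the blow-up \eqref{eq.hslimit} genuinely depends on $x_1$), so the strong maximum principle applies on the connected set $B_\rho\setminus\Omega$. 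Proposition~\ref{prop.hobhp} then gives that $\de_{\be_i}u/\de_{\be_1}u$ is $C^{k,\alpha}$ in $B_{\rho/2}\setminus\Omega$ up to $\Omega$ for every $i=2,\dots,n$. As in the proof of Theorem~\ref{thm.C1a}, the unit normal to the level set $\{u=t\}\cap\{x_{n+1}=0\}$, $t>0$, has components
\[
\nu_i=\frac{\de_{\be_i}u/\de_{\be_1}u}{\bigl(1+\sum_{j=2}^n(\de_{\be_j}u/\de_{\be_1}u)^2\bigr)^{1/2}},\qquad i=1,\dots,n,
\]
a real-analytic function of the ratios with denominator $\ge 1$, hence $C^{k,\alpha}$ up to $\Omega$; letting $t\downarrow 0$ shows that the Gauss map of $\Gamma(u)\cap B_{\rho/2}$ is $C^{k,\alpha}$, so writing $\Gamma(u)$ locally as a graph (whose gradient is then $C^{k,\alpha}$) it is a $C^{k+1,\alpha}$ manifold near the origin. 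This closes the induction.

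Finally, since $k$ is arbitrary and $\Gamma(u)\cap B_{\rho_0}$ consists entirely of regular points, applying the bootstrap around each such point and using that being $C^k$ is a local property yields that $\Gamma(u)\cap B_{\rho_0/2}$ is a $C^k$ manifold for every $k$, i.e. $C^\infty$; this gives the corollary. I expect the only genuinely delicate part of carrying this out to be the repeated verification of the hypotheses of Proposition~\ref{prop.hobhp} — particularly that $v_1,v_2$ extend harmonically across the non-contact part of the thin space and vanish continuously on $\Lambda(u)$ up to $\Gamma(u)$, which is where the evenness, the complementarity condition, and the already-established regularity of $u$ and of $\Gamma(u)$ all enter — together with the bookkeeping of the shrinking neighbourhoods; the substantive analytic input, Proposition~\ref{prop.hobhp}, is taken as given. (One could, in addition, feed $\bigl(v_1,U_0\bigr)$ with $U_0=\sqrt{{\rm dist}(\cdot,\Lambda(u))}$ into the second part of Proposition~\ref{prop.hobhp} to obtain the precise asymptotic expansion of $u$ itself at regular points, but this is not needed here.)
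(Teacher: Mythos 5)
Your proposal is correct and follows essentially the same route as the paper, whose proof is precisely the one-line remark that one repeats the argument of Theorem~\ref{thm.C1a} with Proposition~\ref{prop.hobhp} in place of Theorem~\ref{thm.bhp}; your inductive bootstrap (free boundary $C^{k,\alpha}$ $\Rightarrow$ ratios $\de_{\be_i}u/\de_{\be_1}u\in C^{k,\alpha}$ up to $\Lambda(u)$ $\Rightarrow$ normal $C^{k,\alpha}$ $\Rightarrow$ free boundary $C^{k+1,\alpha}$) is exactly the intended iteration, and your verification of the hypotheses (evenness and harmonicity across $\{x_{n+1}=0\}\setminus\Lambda(u)$, vanishing on $\Lambda(u)$, strict positivity of $\de_{\be_1}u$ via Proposition~\ref{prop.lipfb} and the strong maximum principle) fills in the details the paper leaves implicit.
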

\begin{proof}
Follows analogously to the proof of Theorem~\ref{thm.C1a} by using Proposition~\ref{prop.hobhp} instead of Theorem~\ref{thm.bhp}.
\end{proof}

We refer to \cite{KPS15} for an alternative approach to higher regularity (that yields, in fact, that the free boundary is analytic).

As a consequence of the previous results we also get an expansion around regular points, proving that, up to lower order terms, the solution behaves like the half-space solution. In particular, this next theorem proves the uniqueness of blow-ups.

\begin{thm}[Expansion around regular points]
\label{thm.exp_reg}
Let $u$ be a solution to \eqref{eq.TOP0}, and let us assume $0\in {\rm Reg}(u)$. Then, there exists some $c > 0$ and some $\alpha > 0$ such that
\[
u(x) = c u_0(x) + o\left(|x|^{\frac32+\alpha}\right),
\]
where $u_0$ is the blow-up of $u$ at $0$ (i.e., $u_0(x) = {\rm Re}\left((x_1 + i |x_{n+1}|)^{3/2}\right)$ up to a rotation in the thin space).
\end{thm}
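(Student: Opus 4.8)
The plan is to upgrade the $C^{1,1/2}$ regularity of $u$ together with the $C^\infty$ regularity of the free boundary near a regular point into a pointwise asymptotic expansion. The natural strategy is a two-scale argument: flatten the free boundary, compare $u$ with the model $u_0$, and use a boundary-Harnack-type decay estimate (or a Schauder/hodograph argument) on the difference. Since $0\in {\rm Reg}(u)$, by Theorem~\ref{thm.C1a} (and its $C^\infty$ upgrade) the free boundary $\Gamma(u)$ is a smooth $(n-1)$-dimensional graph through $0$ in the thin space; after a rotation we may assume its tangent plane is $\{x_1 = 0, x_{n+1} = 0\}$. I would then construct a smooth diffeomorphism $\Phi$ of a neighborhood of $0$, equal to the identity to first order at $0$, straightening $\Gamma(u)$ onto $\{x_1 = 0\}$ and mapping $\Lambda(u)$ onto $\{x_1\le 0, x_{n+1} = 0\}$. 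In these coordinates $u\circ\Phi^{-1}$ solves a thin obstacle problem with a variable-coefficient operator $L$ whose coefficients are smooth and equal to $-\Delta$ at the origin, with contact set exactly the half-space $\{x_1\le 0,\ x_{n+1}=0\}$.

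Next I would set $v := u\circ\Phi^{-1}$ and write $v = c\,u_0 + w$ for the correct constant $c>0$ (determined by the blow-up in \eqref{eq.hslimit}, which is now genuinely unique because $\Gamma(u)$ is $C^1$, hence the tangent cone is unique). The function $w$ vanishes on $\{x_1\le 0,\ x_{n+1}=0\}$, has $-\partial_{x_{n+1}}w = 0$ on $\{x_1>0,\ x_{n+1}=0\}$ (both $v$ and $u_0$ being harmonic/$L$-harmonic there to leading order), and satisfies $Lw = (L+\Delta)(c\,u_0) = F$ with $F$ controlled by the $C^{1,1/2}$ bound on $u_0$ and the smoothness of the coefficients, so $|F(x)|\lesssim |x|^{1/2}$ near $0$. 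The key point is a Liouville/decay estimate: a function $w$ that is $L$-caloric... rather, that solves the mixed Dirichlet–Neumann problem across $\{x_{n+1}=0\}$ with homogeneity-$(3/2)$-scale data, and whose tangential gradient matches that of $c\,u_0$ at $0$, must decay faster than $|x|^{3/2}$. Concretely I would apply the higher-order boundary Harnack of Proposition~\ref{prop.hobhp} to the pair $(v, u_0)$ in the straightened domain — since $\Omega=\{x_1\le 0\}$ is smooth and $u_0$ is the model solution, $v/u_0$ extends to a $C^{k,\alpha}$ function up to $\Omega$, and in particular $v/u_0 = c + O(|x|^\alpha)$ near $0$ because $(v/u_0)(0)=c$ by the blow-up. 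Undoing the diffeomorphism $\Phi$ (whose Jacobian is $I + O(|x|)$) converts $v = u_0(c + O(|x|^\alpha))$ back into $u(x) = c\,u_0(x) + o(|x|^{3/2+\alpha'})$ for a slightly smaller exponent, which is the claimed expansion; uniqueness of the blow-up follows since any blow-up limit is then forced to equal $c\,u_0$ after dividing by $(\ave_{\partial B_r}u^2)^{1/2}$ and letting $r\downarrow 0$.

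The main obstacle is justifying the application of the boundary Harnack principle at the model solution $u_0$ in the presence of the variable-coefficient operator $L$ and of the \emph{a priori} only Lipschitz (then $C^{1,\alpha}$) regularity of $\Gamma(u)$: Proposition~\ref{prop.hobhp} as stated is for the Laplacian and for a $C^{k,\alpha}$ domain, so one must either (i) bootstrap — first use Theorem~\ref{thm.bhp} to get $\Gamma(u)\in C^{1,\alpha}$, then Proposition~\ref{prop.hobhp} with $k=1$ to get $C^{2,\alpha}$, and iterate, at each stage re-flattening and checking that the perturbed operator $L$ can be absorbed into the hypotheses (this is where a careful but routine perturbation lemma is needed), or (ii) work directly with $w$, establishing the decay $\|w\|_{L^\infty(B_r)}\lesssim r^{3/2+\alpha}$ by a contradiction/compactness argument à la Simon, using that the only homogeneity-$3/2$ solutions of the limiting problem with the orthogonality built in are multiples of $u_0$. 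I expect route (i), combined with a clean statement that the diffeomorphism $\Phi$ introduces only lower-order perturbations, to be the cleanest; the bookkeeping of how the error exponent $\alpha'$ degrades under $\Phi$ and under finitely many bootstrap steps is the one genuinely fiddly part, but it does not affect the qualitative conclusion.
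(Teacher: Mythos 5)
There is a genuine gap at the heart of your argument: you cannot apply the (higher order) boundary Harnack principle to the pair $(v,u_0)$, i.e.\ to the solution itself divided by the model solution. The hypotheses of Proposition~\ref{prop.hobhp} require the denominator $v_2$ to be positive in $B_1\setminus\Omega$ and to vanish only on $\Omega$, but the half-space solution $u_0(x)={\rm Re}\big((x_1+i|x_{n+1}|)^{3/2}\big)=r^{3/2}\cos\big(\tfrac32\theta\big)$ (polar coordinates in the $(x_1,|x_{n+1}|)$-plane) vanishes on the whole cone $\{\theta=\pi/3\}$, which is an $n$-dimensional set with $x_{n+1}\neq 0$, and is \emph{negative} for $\theta\in(\pi/3,\pi)$. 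So $u_0$ is not a positive function in $B_1\setminus\Omega$, the quotient $v/u_0$ is not even bounded near $\{\theta=\pi/3\}$ (where $v$ has no reason to vanish), and the conclusion ``$v/u_0\in C^{k,\alpha}$ up to $\Omega$'' does not follow from Proposition~\ref{prop.hobhp}. On top of this, your flattening step replaces the Laplacian by a variable-coefficient operator for which neither Theorem~\ref{thm.bhp} nor Proposition~\ref{prop.hobhp} is stated; you flag this, but it is an unnecessary difficulty: both results are formulated for an arbitrary Lipschitz/$C^{k,\alpha}$ slit domain $\Omega$ in the thin space, so no hodograph-type change of variables is needed at all.

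The way the paper proceeds (and the way to repair your argument) is to apply the boundary Harnack not to $u$ but to its \emph{tangential derivatives}. For $\tau\in\mathbb{S}^n\cap\{x_{n+1}=0\}$, the function $v_1=\partial_\tau u$ is harmonic in $B_1\setminus\Lambda(u)$, even in $x_{n+1}$, and vanishes continuously on $\Lambda(u)$; near a regular point $\Lambda(u)$ is a $C^{1,\alpha}$ (in fact $C^\infty$) slit domain by Theorem~\ref{thm.C1a} and its corollary, so the second part of Proposition~\ref{prop.hobhp}, with $\Omega=\Lambda(u)$ and $U_0(x')=\sqrt{{\rm dist}(x',\Lambda(u))}$, gives $\partial_\tau u/U_0\in C^\alpha$ up to $\Lambda(u)$, hence $|\partial_\tau u(x')-c_\tau U_0(x')|\le C|x'|^{\frac12+\alpha}$. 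Using the $C^{1,\alpha}$ regularity of the free boundary one replaces $U_0$ by $\partial_\tau u_0$ up to an $o(|x|^{\frac12+\alpha'})$ error, and the uniform convergence $\partial_\tau u_r\to\partial_\tau u_0$ forces all the constants to agree, yielding $|\nabla_{x'}u-c\,\nabla_{x'}u_0|\le C|x'|^{\frac12+\alpha'}$; integrating from the free boundary point (where both gradients vanish) gives the expansion on the thin space, harmonic estimates extend it to $B_1^+$, and $c>0$ because $c=0$ would force the frequency at $0$ to be at least $\tfrac32+\alpha'$, contradicting $0\in{\rm Reg}(u)$. Your alternative route (ii), a compactness/decay argument on $w=v-c\,u_0$ using a classification of homogeneous solutions of the linearized mixed Dirichlet--Neumann problem, is a legitimate strategy in principle, but as sketched it is not carried out (no spectral gap or Liouville statement is proved), so as written the proposal does not establish the theorem.
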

\begin{proof}
We here use the second part of Proposition~\ref{prop.hobhp}. By taking $\tau\in \mathbb{S}^{n}\cap \{x_{n+1} = 0\}$ and $v_1 = \partial_\tau u$ (a tangential derivative to the thin space), by Proposition~\ref{prop.hobhp} applied with $\Omega = \Lambda(u)$ (the contact set) we have
\[
\frac{\partial_\tau u}{U_0}\in C^\alpha
\]
in the thin space, for some fixed $\alpha > 0$ (coming from the boundary Harnack), outside of the contact set and up to the free boundary. In particular, 
\[
\left|\frac{\partial_\tau u}{U_0}(x') - c_0\,\right|  \le C|x'|^\alpha\quad\Longrightarrow\quad |\partial_\tau u(x') - c_0U_0(x')|\le CU_0(x')|x'|^{\alpha}\le C|x'|^{\frac12 +\alpha},
\]
for some constant $c_0 = \frac{\partial_\tau u}{U_0}(0)$. We recall that $U_0(x') = \sqrt{{\rm dist}(x', \Omega)}$. By the $C^{1,\alpha}$ re\-gu\-larity of the free boundary, there exists some $c_\tau$ such that $U_0 - c_\tau \partial_\tau u_0 = o\left(|x|^{\frac12 + \alpha'}\right)$ for some $\alpha' > 0$, where $u_0$ is the blow-up at 0. Thus, we have that
\[
|\partial_{\be_i} u(x') - c_i\partial_{\be_i}  u_0(x')|\le C|x'|^{\frac12 +\alpha'}.
\]
From the local uniform convergence $\partial_\tau u_r \to \partial_\tau u_0$ we must have $c_i = c\ge 0$ for all $i = 1,\dots,n$ in the previous expression,
where 
\[
c = \lim_{r\downarrow 0} r^{-\frac32} \left(\ave_{\de B_r} u^2\right)^{\frac12}.
\]

Thus, 
\[
|\nabla_{x'} u(x') - c\nabla_{x'} u_0(x')|\le C|x'|^{\frac12+ \alpha'}. 
\]
Since $\nabla_{x'}u(0) = \nabla_{x'}u_0(0) = 0$, by integrating the previous expression we deduce 
\[
|u(x') - c u_0(x')|\le C|x'|^{\frac32+ \alpha'}.
\]
By harmonic estimates, such inequality also holds outside of the thin space.
Now, if $ c =0$, it means that the frequency at 0 is at least $\frac32+\alpha'$. This contradicts 0 being a regular point, and thus, $c > 0$. This concludes the proof.
\end{proof}

We finish by noticing the uniqueness of blow-ups at regular points.

\begin{cor}[Uniqueness of blow-ups at regular points]
Let $u$ be a solution to \eqref{eq.TOP0}, and let us assume $0\in {\rm Reg}(u)$. Then, up to a rotation, 
\[
\frac{u(r\,\cdot)}{r^{\frac32}}\to c u_0\quad\text{as}\quad r\downarrow 0,
\]
locally uniformly, for some $c > 0$. Here, $u_0(x) = {\rm Re}\left((x_1 + i |x_{n+1}|)^{3/2}\right)$. 
\end{cor}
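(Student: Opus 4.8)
The plan is to obtain the statement as an immediate corollary of the expansion around regular points, Theorem~\ref{thm.exp_reg}. After the rotation that normalizes the blow-up as in \eqref{eq.hslimit}, that theorem provides constants $c>0$ and $\alpha>0$ with
\[
u(x) = c\,u_0(x) + \omega(x),\qquad u_0(x) = {\rm Re}\left((x_1 + i|x_{n+1}|)^{3/2}\right),
\]
where $u_0$ is $\tfrac32$-homogeneous and the remainder satisfies $|\omega(y)| \le \eta(|y|)\,|y|^{3/2+\alpha}$ for some modulus $\eta(t)\downarrow 0$ as $t\downarrow 0$ (this is how the $o(|x|^{3/2+\alpha})$ in Theorem~\ref{thm.exp_reg} should be read, since it comes from a H\"older-type bound near the origin).

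First I would fix $R>0$ and evaluate at $rx$ for $x\in\overline{B_R}$. Using $u_0(rx) = r^{3/2}u_0(x)$ and dividing by $r^{3/2}$,
\[
\frac{u(rx)}{r^{3/2}} = c\,u_0(x) + r^{-3/2}\omega(rx),\qquad \bigl|r^{-3/2}\omega(rx)\bigr| \le \eta(rR)\,R^{3/2+\alpha}\,r^{\alpha}.
\]
Since the error bound on the right is independent of $x\in\overline{B_R}$ and tends to $0$ as $r\downarrow 0$, we get $u(r\cdot)/r^{3/2}\to c\,u_0$ uniformly on $\overline{B_R}$; letting $R\to\infty$ gives the claimed locally uniform convergence on $\R^{n+1}$ (and, by harmonic estimates, also locally uniform convergence of the gradients, if desired).

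There is essentially no obstacle here: the only point to check is that $c>0$, which is exactly the conclusion of the last lines of the proof of Theorem~\ref{thm.exp_reg} --- if $c$ vanished, the frequency $N(0^+,u,0)$ would be at least $\tfrac32+\alpha'$, contradicting $0\in{\rm Reg}(u)$ --- and one should identify $c = \lim_{r\downarrow 0} r^{-3/2}\bigl(\ave_{\de B_r}u^2\bigr)^{1/2}$, which also shows $c$ does not depend on the chosen blow-up sequence. Alternatively, and without quoting the quantitative Theorem~\ref{thm.exp_reg}, one could argue from Corollary~\ref{cor.strongconv} and Theorem~\ref{thm.blowupthm}: any subsequential limit of $u_r$ is a $\tfrac32$-homogeneous global solution, hence (after the fixed rotation) equals $c\,u_0$ with the same constant $c$ for every subsequence, so the full limit exists; but the route through Theorem~\ref{thm.exp_reg} is the shortest and yields the rate as a bonus.
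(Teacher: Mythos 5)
Your proposal is correct and follows the same route as the paper, whose proof is literally the one-line observation that the statement is a direct consequence of Theorem~\ref{thm.exp_reg}; you simply spell out the rescaling $u(rx)/r^{3/2} = c\,u_0(x) + r^{-3/2}o\left((r|x|)^{3/2+\alpha}\right)$, the positivity of $c$, and its identification with $\lim_{r\downarrow 0} r^{-3/2}\bigl(\ave_{\de B_r}u^2\bigr)^{1/2}$, all of which are exactly the ingredients the paper relies on.
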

\begin{proof}
This is a direct consequence of Theorem~\ref{thm.exp_reg}.
\end{proof}

Notice that in the previous result we are adapting the blow-up sequence to the type of point we are dealing with. In particular, the corresponding blow-up obtained above is a constant multiple of the blow-up obtained with the sequence \eqref{eq.blowupseq}. 

\section{Singular points}
\label{sec.8}
In the classical (or thick) obstacle problem, all points of the free boundary have frequency 2, and thus the classification of free boundary points must be performed differently: regular points are those such that the contact set has positive density, whereas singular points are those where the contact set has zero density. 

This motivates the definition of singular point. Whereas it is not true that all points of positive density belong to the set ${\rm Reg}(u)$ as defined above, one can characterize the points with zero density. 

Let us start defining the set of singular points, which was originally studied by Garofalo and Petrosyan in \cite{GP09}. Let $u$ denote a solution to the thin obstacle problem, \eqref{eq.TOP0}, then we define
\begin{equation}
\label{eq.sing}
{\rm Sing}(u) := \left\{x\in \Gamma(u) : \liminf_{r\downarrow 0} \frac{\mathcal{H}^n(\Lambda(u)\cap B_r(x))}{\mathcal{H}^n(B_r(x)\cap\{x_{n+1} = 0\})} = 0 \right\},
\end{equation}
where we recall that $\Lambda(u)$ denotes the contact set, and $\mathcal{H}^n(E)$ denotes the $n$-dimensional Hausdorff measure of a set $E$. 

The first result in this direction involves the characterization of such points.

\begin{prop}[Characterization of singular points, \cite{GP09}]
Let $u$ be a solution to \eqref{eq.TOP0}. Then, the set of singular points \eqref{eq.sing} can be equivalently characterized by 
\[
{\rm Sing}(u) = \left\{x\in \Gamma(u) : N(0^+, u, x) = 2m,~~ m\in \N \right\}.
\]
That is, singular points are those with even frequency.
\end{prop}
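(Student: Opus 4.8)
The plan is to establish the equality of two sets via a double inclusion, using the classification of blow-ups (Theorem~\ref{thm.blowupthm}) and the structure of homogeneous global solutions. The key object is the blow-up $u_0$ at a free boundary point $x_\circ$, which by Corollary~\ref{cor.strongconv} is a $\lambda$-homogeneous global solution with $\lambda = N(0^+, u, x_\circ) \in \{\tfrac32\}\cup[2,\infty)$, and whose contact set $\Lambda(u_0)$ is the blow-up (in the Hausdorff sense, using the strong $C^1$ convergence) of the rescaled contact sets. First I would show the inclusion $\{N(0^+,u,x) = 2m\}\subset {\rm Sing}(u)$: if $\lambda = 2m$ is even, then $u_0$ is a $2m$-homogeneous global solution, and I claim $u_0$ must be a (nonzero, nonnegative-on-the-thin-space, even) polynomial that is harmonic \emph{across} $\{x_{n+1}=0\}$, hence $\Lambda(u_0)\subset \{u_0 = 0\}$ has zero $\mathcal{H}^n$-density in the thin space — because the zero set of a nontrivial harmonic polynomial is $(n-1)$-dimensional. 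The density statement transfers back to $u$ via the convergence of the contact sets, giving $x\in{\rm Sing}(u)$.

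The converse inclusion is the heart of the matter: if $x_\circ\in{\rm Sing}(u)$, i.e. $\liminf_{r\downarrow 0}\mathcal{H}^n(\Lambda(u)\cap B_r)/\mathcal{H}^n(B_r\cap\{x_{n+1}=0\}) = 0$, then $\lambda := N(0^+,u,x_\circ)$ is an even integer. The strategy is: (i) the density-zero hypothesis passes to the blow-up, so $\Lambda(u_0)$ has empty interior in the thin space; (ii) consequently $\partial_{x_{n+1}}u_0$ is a harmonic function in $\{x_{n+1}>0\}$ which (by the Signorini conditions and the fact that $\partial_{x_{n+1}}u_0 = 0$ wherever $u_0>0$ on the thin space, together with $\Lambda(u_0)$ having empty interior) vanishes on \emph{all} of $\{x_{n+1}=0\}$; (iii) therefore the even reflection of $u_0$ is a global harmonic function across the thin space, and being $\lambda$-homogeneous it is a harmonic polynomial, which forces $\lambda\in\N$; (iv) evenness in $x_{n+1}$ rules out odd degree (a homogeneous harmonic polynomial even in $x_{n+1}$ has even degree), so $\lambda = 2m$. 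Combined with Theorem~\ref{thm.blowupthm} (which already excludes $\lambda\in(1,2)$, and $\tfrac32$ corresponds to a regular point, not a singular one), this gives $\lambda\in 2\N$.

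The main obstacle I expect is step (ii) of the converse: rigorously deducing that $\partial_{x_{n+1}}u_0\equiv 0$ on the entire thin space from the fact that it vanishes on the (dense, open) complement of $\Lambda(u_0)$ and that $\Lambda(u_0)$ has empty interior. One must be careful that $u_0$ is only known to be $C^{1,\alpha}$ up to $\{x_{n+1}=0\}$ from one side (Corollary~\ref{cor.strongconv}), so $\partial_{x_{n+1}}^+u_0$ is continuous on the thin space; since it is $\le 0$ everywhere, $=0$ on the open dense set $\{x_{n+1}=0\}\setminus\Lambda(u_0)$, continuity forces it to vanish identically. Then the even reflection of $u_0$ across $\{x_{n+1}=0\}$ has vanishing normal derivative from both sides and matching values, so it is harmonic across (a standard removable-singularity / reflection argument for the Neumann condition), and one concludes as above. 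A secondary technical point is justifying that Hausdorff-density-zero is inherited by the blow-up limit of the coincidence sets; this follows from the uniform $C^1$-convergence $u_{r_j}\to u_0$ of Corollary~\ref{cor.strongconv}, which gives convergence of $\{u_{r_j} = 0\}\cap\{x_{n+1}=0\}$ to $\Lambda(u_0)$ locally in the Hausdorff distance on the set where $u_0$ is nondegenerate, and a covering argument to control the measure.
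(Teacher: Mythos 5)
Your overall skeleton (blow-up classification plus harmonicity/polynomiality of the blow-up, transferring density information back and forth) matches the paper's, but the converse inclusion contains a genuine gap at its first step. You claim that the density-zero hypothesis \eqref{eq.sing} ``passes to the blow-up'', so that $\Lambda(u_0)$ has empty interior, and you justify it by convergence of the contact sets $\Lambda(u_{r_j})$ to $\Lambda(u_0)$ in Hausdorff distance. But uniform (or $C^1$) convergence only gives the \emph{wrong-direction} semicontinuity: on any compact set where $u_0>0$ one eventually has $u_{r_j}>0$, i.e.\ $\Lambda(u_{r_j})$ is eventually contained in any neighbourhood of $\Lambda(u_0)$; it does not prevent $u_{r_j}$ from being strictly positive but tiny on a region where $u_0\equiv 0$, so smallness of $\mathcal{H}^n(\Lambda(u_{r_j}))$ gives no bound on $\Lambda(u_0)$ without a non-degeneracy statement that is not available at this stage (that $\Lambda(u_0)$ has empty interior is only known \emph{a posteriori}, once you know $u_0$ is a polynomial not vanishing on the thin space). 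The paper sidesteps this entirely by arguing at the level of the measure $\Delta u_{r_j}$: by \eqref{eq.delta_xn} it is supported on $\Lambda(u_{r_j})$ with density $2\partial^+_{x_{n+1}}u_{r_j}$ bounded uniformly by the $C^{1,\alpha}$ estimates, and $\mathcal{H}^n(\Lambda(u_{r_j})\cap B_R)\to 0$, hence $|\Delta u_{r_j}|(B_R)\to 0$ and the limit $u_0$ is harmonic (the paper phrases this as ``Laplacian concentrated on a set of zero capacity''). This fix replaces your steps (i)--(ii) and directly yields $\partial^+_{x_{n+1}}u_0\equiv 0$, after which your reflection argument in (iii) is fine.

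Two further points need repair. In step (iv), the statement ``a homogeneous harmonic polynomial even in $x_{n+1}$ has even degree'' is false ($x_1^3-3x_1x_{n+1}^2$ is harmonic, even in $x_{n+1}$, of degree $3$); the correct argument, as in the paper, uses the sign: an odd-degree polynomial that is $\ge 0$ on the thin space must vanish there, and then evenness (so $\partial_{x_{n+1}}u_0=0$ on the thin space too) plus unique continuation forces $u_0\equiv 0$, a contradiction. In the easy direction, your assertion that a $2m$-homogeneous global solution is a harmonic polynomial is exactly the nontrivial lemma the paper cites (\cite{Mon09}, \cite{GP09}) and should at least be referenced, and the density-zero conclusion requires knowing that $u_0$ does \emph{not} vanish identically on the thin space (again Cauchy--Kovalevskaya/evenness): the relevant zero set is that of the restriction $u_0(\cdot,0)$, a nontrivial polynomial on $\R^n$, not the $n$-dimensional zero set of $u_0$ in $\R^{n+1}$.
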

\begin{proof}
Let us suppose that $0\in {\rm Sing}(u)$ according to definition \eqref{eq.sing}, and take a sequence $r_j\downarrow 0$ such that 
\begin{equation}
\label{eq.singlim}
\frac{\mathcal{H}^n(\Lambda(u)\cap B_{r_j})}{\mathcal{H}^n(B_{r_j}\cap\{x_{n+1} = 0\})} \to 0.
\end{equation}

Consider the sequence $u_{r_j}$, and after taking a subsequence if necessary, let us assume $u_{r_j}\to u_0$ uniformly in $B_1$. Notice that $\Delta u_{r_j} $ is a  non-positive measure supported on $\Lambda(u_{r_j})$. By assumption, $\mathcal{H}^n(\Lambda(u_{r_j})\cap B_1)\to 0$. Thus, since $u_{r_j}$ converges uniformly to $u_0$, $u_0$ has Laplacian concentrated on a set with zero harmonic capacity, and thus, it is harmonic. 

By Theorem~\ref{thm.blowupthm}, $u_0$  is a global  homogeneous solution to the thin obstacle problem, with homogeneity $\kappa := N(0^+, u)$. In particular, being homogeneous and harmonic, it must be a polynomial. Moreover, since $u_r$ is even with respect to $\{x_{n+1} = 0\}$, so is $u_0$. Thus, $u_0$ is a non-zero, harmonic polynomial, even with respect to $\{x_{n+1} = 0\}$ and non-negative on the thin space. Its homogeneity must be even, and thus $\kappa = 2m$ for some $m\in \N$. 

Suppose now that $0\in \Gamma(u)$ is such that $N(0^+, u) = 2m$ for some $m\in \N$. Take any blow-up of $u$ at zero, $u_0$. Then $u_0$ is a global solution to the thin obstacle problem, with homogeneity $2m$. As a consequence $u_0$ must be harmonic everywhere, and thus, an homogeneous harmonic polynomial (we refer to \cite[Lemma 7.6]{Mon09} or \cite[Lemma 1.3.4]{GP09} for a proof of this fact). 

Now, since $u_0$ is non-zero even homogeneous harmonic polynomial, and is non-zero on the thin space (by Cauchy-Kovalevskaya), $\mathcal{H}^n(\{u_0= 0\} \cap \{x_{n+1} = 0\}) = 0$. Thus, from the uniform convergence $u_{r_j} \to u_0$, we must have that \eqref{eq.singlim} holds. 
\end{proof}

Thus, the set of singular points consists of those points with even homogeneity. It is then natural to define 
\[
\Gamma_\lambda(u) := \{x\in \Gamma(u) : N(0^+, u, x) = \lambda\},
\]
so that 
\[
{\rm Sing}(u) = \bigcup_{m \in \N} \Gamma_{2m}(u) =: \Gamma_{\rm even}(u).
\]

In fact, singular points present a particularly good structure. At singular points of order $2m$, the solution to the thin obstacle problem is $2m$ times differentiable (in the sense of \eqref{eq.expu} below) and in particular, the blow-up is unique (see Remark~\ref{rem.typesofblowups} below), and belongs to the set 
\[
\mathcal{P}_{2m} := \{p : \Delta p = 0, x\cdot \nabla p = 2mp, p(x', 0) \ge 0, p(x', x_{n+1}) = p(x', -x_{n+1})\},
\]
$2m$-homogeneous, harmonic polynomials, non-negative on the thin space. 
That is, the following result from \cite{GP09}, which we will not prove, holds.

\begin{thm}[Uniqueness of blow-ups at singular points, \cite{GP09}]
\label{thm.kdiff}
Let $u$ be a solution to \eqref{eq.TOP0}. Let $x_\circ\in \Gamma_{2m}(u)$ for some $m\in \N$. Then, there exists a non-zero polynomial $p_{x_\circ}\in \mathcal{P}_{2m}$ such that 
\begin{equation}
\label{eq.expu}
u(x) = p_{x_\circ}(x-x_\circ) + o(|x-x_\circ|^{2m}). 
\end{equation}
In particular, the blow-up at 0 is unique. Moreover, the map $x_\circ\ni \Gamma_{2m}(u)\mapsto p_{x_\circ}$ is continuous. 
\end{thm}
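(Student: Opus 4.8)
The plan is to follow the standard Monneau/Weiss-type monotonicity approach adapted to the thin obstacle problem, which is how Garofalo and Petrosyan originally proceeded. The backbone is a \emph{Monneau-type monotonicity formula}: for $x_\circ \in \Gamma_{2m}(u)$ and $p \in \mathcal{P}_{2m}$, define
\[
M_{2m}(r, u, x_\circ, p) := \frac{1}{r^{n+4m}}\int_{\de B_r(x_\circ)} (u - p)^2.
\]
The first step is to show that $r \mapsto M_{2m}(r,u,x_\circ,p)$ is nondecreasing. This is the technical heart of the argument: one differentiates in $r$, uses the Signorini complementarity conditions (so that $u\,\de_{x_{n+1}}^+ u = 0$ and $p\,\de_{x_{n+1}} p = 0$ on the thin space, with the correct sign of the boundary terms when $u \ge 0 = p$ there), and combines it with a Weiss-type monotonicity formula at the $2m$-homogeneous level, namely that $W_{2m}(r,u,x_\circ) := r^{-n-4m+2}\int_{B_r(x_\circ)}|\nabla u|^2 - 2m\, r^{-n-4m+1}\int_{\de B_r(x_\circ)} u^2$ is nondecreasing and tends to $W_{2m}(0^+)$, which equals the corresponding Weiss energy of any blow-up. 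Since $N(0^+,u,x_\circ)=2m$, every blow-up $u_0$ is a nonzero element of $\mathcal{P}_{2m}$ (this is already established in the preceding proposition via harmonicity of the blow-up), so $W_{2m}(0^+,u,x_\circ) = W_{2m}(0^+, p) $ for each such $p$, which is what makes the Monneau quantity monotone rather than merely bounded.

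The second step is the uniqueness of the blow-up. Pick any blow-up $u_0 = \lim u_{r_j}$; it lies in $\mathcal{P}_{2m}$ and satisfies $\|u_0\|_{L^2(\de B_1)}$ normalized appropriately. Apply the monotonicity of $M_{2m}(\cdot, u, x_\circ, u_0)$: its limit as $r \downarrow 0$ is finite, and along $r_j$ one computes $M_{2m}(r_j, u, x_\circ, u_0) = \ave_{\de B_1}(u_{r_j} - u_0)^2 \to 0$ (after matching normalizations — here one must be careful that the natural rescaling at a singular point is $u(x_\circ + r\,\cdot)/r^{2m}$, not the $L^2$-sphere normalization, but the two agree up to the convergent factor $r^{-2m}(\ave_{\de B_r}u^2)^{1/2} \to \|p_{x_\circ}\|$). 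Monotonicity then forces $M_{2m}(r, u, x_\circ, u_0) \to 0$ along the full family $r \downarrow 0$, which gives convergence of the rescalings in $L^2(\de B_1)$ to $u_0$ independently of the subsequence; upgrading to $C^1_{\rm loc}$ convergence and to the expansion \eqref{eq.expu} with $p_{x_\circ} = u_0$ (rescaled) uses the $C^{1,\alpha}$ estimates of Theorem~\ref{thm.caf79} exactly as in Corollary~\ref{cor.strongconv}, plus a standard argument that $M_{2m}(r) \to 0$ combined with the uniform $W^{1,2}$ bound yields the $o(|x-x_\circ|^{2m})$ rate.

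The third step is continuity of $x_\circ \mapsto p_{x_\circ}$ on $\Gamma_{2m}(u)$. Here one uses two ingredients: upper semicontinuity of $x_\circ \mapsto N(0^+,u,x_\circ)$ (already noted in the excerpt) together with a non-degeneracy/lower bound showing that near a point of $\Gamma_{2m}$ the frequency cannot drop below $2m$ on $\Gamma_{2m}$ itself, so that $N(0^+,u,\cdot)$ is actually continuous (equal to $2m$) on $\Gamma_{2m}(u)$; and a uniform (in $x_\circ$ in a compact subset) version of the decay estimate $\ave_{\de B_r(x_\circ)}(u - p_{x_\circ})^2 \le \sigma(r) r^{4m}$ with a modulus $\sigma$ independent of $x_\circ$, which comes from a compactness argument on $M_{2m}$. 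Given a sequence $x_k \to x_\circ$ in $\Gamma_{2m}(u)$, the polynomials $p_{x_k}$ have uniformly bounded coefficients (from the growth bound on $u$), so a subsequence converges to some $q \in \mathcal{P}_{2m}$; plugging into the uniform decay estimate and passing to the limit identifies $q = p_{x_\circ}$, and since the limit is independent of the subsequence, $p_{x_k} \to p_{x_\circ}$.

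The main obstacle I expect is establishing the monotonicity of the Monneau-type functional $M_{2m}$ with the correct handling of the thin-space boundary terms: the differentiation produces a boundary integral over $B_r'$ of the form $\int (u-p)(\de_{x_{n+1}}^+ u - \de_{x_{n+1}} p)$, and one needs the complementarity conditions for both $u$ and $p$ — together with the signs $u \ge 0$, $p \ge 0$, $-\de_{x_{n+1}}^+ u \ge 0$, $-\de_{x_{n+1}} p \ge 0$ on the thin space — to conclude this term has a favorable sign; pairing this correctly with the Weiss formula and with the fact that $W_{2m}(0^+,u,x_\circ)$ matches the Weiss energy of the competitor polynomial $p$ is the delicate bookkeeping. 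Everything else (the $C^1$ compactness of rescalings, the polynomial classification of $2m$-homogeneous even harmonic functions, the coefficient bounds) is routine given the results already quoted in the excerpt.
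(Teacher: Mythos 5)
Your proposal follows essentially the same route the paper indicates for this theorem: the paper does not prove it but defers to Garofalo--Petrosyan \cite{GP09}, sketching precisely the ingredients you outline --- the Weiss-type monotonicity \eqref{eq.w2m} at the $2m$-homogeneous level, the Monneau-type monotonicity for competitors $p\in\mathcal{P}_{2m}$, non-degeneracy first, then uniqueness of the blow-up, and continuity of $x_\circ\mapsto p_{x_\circ}$ by a compactness argument. As a side remark, your normalization $r^{-(n+4m)}\int_{\partial B_r}(u-p)^2$ is the correct one for ambient dimension $n+1$ and homogeneity $2m$ (the exponent $n+2m$ in the paper's displayed Monneau quantity is a typo), and your handling of the thin-space term is simplified by noting that $\partial_{x_{n+1}}p\equiv 0$ on $\{x_{n+1}=0\}$ for even harmonic polynomials, so the only sign needed is $\int p\,\partial^+_{x_{n+1}}u\le 0$.
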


\begin{rem}
\label{rem.typesofblowups}
In the proof of the previous result, one defines $p_{x_\circ}$ at a free boundary point $x_\circ$ of frequency $2m$ as 
\begin{equation}
\label{eq.blowupsing}
p_{x_\circ}(x) := \lim_{r\downarrow 0} \frac{u(x_\circ + rx)}{r^{2m}}.
\end{equation}
Then $p_{x_\circ}$ is well-defined, and it is sometimes called the \emph{blow-up} or \emph{first blow-up} at singular points. The blow-up $p_{x_\circ}$ is a (non-zero) multiple of the blow-up obtained by the sequence \eqref{eq.blowupseq}. 
From now on, when referring to the blow-up at a singular point we refer to the one obtained by \eqref{eq.blowupsing}, which is uniquely determined. 
\end{rem}

The proof of the previous theorem is based on Weiss and Monneau-type monotonicity formulas, saying that if $u$ has a singular points of order $2m$ at the origin, the following functions are non-decreasing,
\begin{equation}
\label{eq.w2m}
r\mapsto W_{2m} (r, u) := \frac{1}{r^{n-1+4m}}\int_{B_r}|\nabla u|^2 - \frac{2m}{r^{n+4m}}\int_{\partial B_r} u^2
\end{equation}
and (as a consequence),
\[
r\mapsto M_{2m}(r, u, p_{2m}) = \frac{1}{r^{n+2m}}\int_{\partial B_r} (u-p_{2m})^2,
\]
for all $p\in \mathcal{P}_{2m}$ and $0< r< 1$.
From here, in \cite{GP09} the authors establish first non-degeneracy at singular points (which, in particular, yields that \eqref{eq.blowupsing} is non-zero), and then the uniqueness of a blow-up. The continuity with respect to the point then follows by a compactness argument.

Theorem \ref{thm.kdiff} establishes a connection between singular points and their blow-ups. This also allows to separate between different singular points according to ``how big'' the contact set is around them. We already know it has zero $\mathcal{H}^n$-density. In fact, the contact set around singular points has the same ``size'' as the translation invariant set of the blow-up (see \eqref{eq.tis} below). Thus, we can establish a further stratification within the set of singular points, according to the size of the translation invariant set (which is a subspace) of the blow-up. 

Given a solution $u$ to the thin obstacle problem \eqref{eq.TOP0}, and given $x\in \Gamma(u)$, let us denote by $p_x$ any blow-up of $u$ at $x$. In particular, if $x$ is a singular free boundary point, $p_x\in \mathcal{P}_{2m}$ (defined as \eqref{eq.blowupsing} in Remark~\ref{rem.typesofblowups}) is uniquely determined by the result above.

Let us denote by $L(p)$ the translation invariant set for $p$, where $p$ is a blow-up,
\begin{equation}
\label{eq.tis}
\begin{split}
L(p) & := \left\{\xi \in \R^{n+1} : p(x+\xi) = p(x) \textrm{ for all } x\in \R^{n+1}\right\}\\
& = \left\{\xi \in \R^{n+1} : \xi \cdot \nabla p(x) = 0 \textrm{ for all } x\in \R^{n+1}\right\},
\end{split}
\end{equation}
where we recall that blow-ups $p$ are homogeneous. 
Then, if we denote 
\begin{equation}
\label{eq.gammaell}
\Gamma_{2m}^\ell := \{x\in \Gamma_{2m} : \dim{L(p_x)} = \ell\},\quad \ell\in\{0,\dots,n-1\},
\end{equation}
we have
\[
{\rm Sing}(u) = \Gamma_{\rm even}(u) = \bigcup_{m\in \N} \Gamma_{2m} = \bigcup_{m\in\N}\bigcup_{\ell = 0}^{n-1}\Gamma^\ell_{2m}.
\]

As a consequence of Theorem~\ref{thm.kdiff}, combined with Whitney's extension theorem and the implicit function theorem, one can prove the following result regarding the structure of the singular set.
\begin{thm}[\cite{GP09}]
\label{thm.gp09}
Let $u$ be a solution to \eqref{eq.TOP0}. Then, the set $\Gamma_{2m}^\ell(u)$ (see \eqref{eq.gammaell}) for $\ell\in \{0,\dots,n-1\}$, is contained in a countable union of $C^1$ $\ell$-dimension manifolds. 
\end{thm}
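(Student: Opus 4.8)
The plan is to combine the uniqueness and continuity of blow-ups at singular points (Theorem~\ref{thm.kdiff}) with Whitney's extension theorem to produce a $C^1$ function whose zero set, via the implicit function theorem, contains $\Gamma_{2m}^\ell(u)$ after a countable decomposition. The key point is that the expansion \eqref{eq.expu} gives, at each singular point $x_\circ\in\Gamma_{2m}(u)$, a Taylor polynomial $p_{x_\circ}$ of degree $2m$ with error $o(|x-x_\circ|^{2m})$, and the map $x_\circ\mapsto p_{x_\circ}$ is continuous; this is exactly the data needed to apply Whitney's extension theorem to obtain $Q\in C^{2m}(\R^n)$ (restricting everything to the thin space $\{x_{n+1}=0\}\cong\R^n$) agreeing with $u$ to order $2m$ along $\Gamma_{2m}(u)$, with $\nabla^j Q(x_\circ)=\nabla^j p_{x_\circ}(x_\circ)$ for $|j|\le 2m$.

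First I would fix $m$ and work with $\Gamma_{2m}(u)$, which is the locally compact set on which we have the above data. I would verify the compatibility hypotheses of Whitney's extension theorem: one needs that for $x_\circ,y_\circ\in\Gamma_{2m}(u)$ the polynomials $p_{x_\circ}$ and $p_{y_\circ}$ and their derivatives match up to an error $o(|x_\circ-y_\circ|^{2m-|j|})$ as $|x_\circ-y_\circ|\to 0$; this follows from \eqref{eq.expu} applied at both points together with the continuity of $x_\circ\mapsto p_{x_\circ}$ (the standard argument: evaluate the expansion centered at $x_\circ$ at nearby points of the form $y_\circ+$ small, and compare with the expansion centered at $y_\circ$). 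Granting this, Whitney's theorem yields $Q\in C^{2m}(\R^n)$ with $\partial^j Q = \partial^j p_{x_\circ}$ at each $x_\circ$, for all $|j|\le 2m$.

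Next I would stratify by the dimension $\ell$ of the translation-invariant subspace $L(p_{x_\circ})$ defined in \eqref{eq.tis}. For $x_\circ\in\Gamma_{2m}^\ell(u)$, the polynomial $p_{x_\circ}$ has exactly $\ell$ independent directions $\xi$ with $\xi\cdot\nabla p_{x_\circ}\equiv 0$ and $n-\ell$ independent directions along which it genuinely varies; concretely, among the $2m$-th order derivatives $\partial^j p_{x_\circ}$ (equivalently, the Hessian-type data of $\nabla^{2m-1}p_{x_\circ}$) there are, after choosing coordinates, $n-\ell$ that are linearly independent. One then decomposes $\Gamma_{2m}^\ell(u) = \bigcup_k E_k$ into countably many pieces $E_k$ on each of which a \emph{fixed} choice of $n-\ell$ of the $(2m-1)$-st partials of $Q$ has an invertible differential (the corresponding $(n-\ell)\times(n-\ell)$ Jacobian minor is nonzero); this is possible because the nonvanishing of such a minor is an open condition on $\R^n$ and at each $x_\circ\in\Gamma_{2m}^\ell(u)$ at least one such minor is nonzero, by the dimension count on $L(p_{x_\circ})$. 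On $E_k$, the map $F_k:=(\partial^{j_1}Q,\dots,\partial^{j_{n-\ell}}Q)$ is $C^1$ with invertible differential, so by the implicit function theorem $\{F_k = F_k(x_\circ)\}$ is locally an $\ell$-dimensional $C^1$ manifold; since $\Gamma_{2m}(u)$ sits inside $\{\nabla^{2m-1}Q = (\text{the appropriate constant vector})\}$ near $x_\circ$ and $E_k$ lies on a fixed level set of $F_k$, we conclude $E_k$ is contained in such an $\ell$-manifold. Taking the union over $k$, $m$, and over the finitely many minor choices gives the result.

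The main obstacle is the verification of the Whitney compatibility conditions, i.e.\ showing that the expansions \eqref{eq.expu} at nearby singular points are \emph{uniformly} compatible to order $2m$; the pointwise $o(\cdot)$ in \eqref{eq.expu} is not a priori uniform in $x_\circ$, so one must upgrade it using a compactness/contradiction argument together with the continuity of $x_\circ\mapsto p_{x_\circ}$ on the (locally compact) set $\Gamma_{2m}(u)$ — this is exactly the technical core carried out in \cite{GP09}, and it is what makes Whitney's theorem applicable. A secondary point requiring care is that the decomposition into the $E_k$ must be countable and must exhaust $\Gamma_{2m}^\ell(u)$; this is handled by noting that the finite collection of candidate Jacobian minors, together with a countable basis of balls on which a chosen minor stays bounded away from zero, suffices.
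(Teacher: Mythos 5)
Your proposal follows exactly the route the paper indicates for this theorem (the paper itself only sketches it and defers to \cite{GP09, PSU12}): the expansion \eqref{eq.expu} and continuity of blow-ups from Theorem~\ref{thm.kdiff}, Whitney's extension theorem on the thin space, and the implicit function theorem applied to $n-\ell$ partial derivatives of order $2m-1$ whose gradients span $L(p_{x_\circ})^\perp$. The only precision worth adding is that Whitney's theorem must be applied on closed (in practice compact) pieces of $\Gamma_{2m}(u)$ --- which itself need not be closed, since lower-frequency points can accumulate at higher-frequency ones --- with the $o(|x-x_\circ|^{2m})$ error made uniform on each piece; you correctly identify this uniformity (via compactness and the continuity of $x_\circ\mapsto p_{x_\circ}$) as the technical core carried out in \cite{GP09}.
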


Notice that the fact that each stratum of the singular set is contained in countable union of manifolds (rather than  a single manifold) is unavoidable: there could be accumulation of lower-order points (say, of order 2) to higher order points (say, of order 4).  We refer the reader to the original papers, as well as \cite{PSU12, Sal12, DaSa18}, for the proofs of the previous statements.

On the other hand, the previous result can also be applied to the whole singular set: ${\rm Sing}(u)$ can be covered by a countable union of $C^1$ $(n-1)$-dimensional manifolds. The fact that the manifold is $C^1$ is due to the expansion of the solution \eqref{eq.expu}. In \cite{FJ18}, Jhaveri and the author show higher order expansions at singular points $x_\circ\in \Gamma_{2m}(u)$, analogous to \eqref{eq.expu}, as 
\begin{equation}
\label{eq.expuq}
u(x) = p_{x_\circ}(x-x_\circ) + q_{x_\circ}(x-x_\circ) + o(|x-x_\circ|^{2m+1})
\end{equation}
for some $(2m+1)$-homogeneous, harmonic polynomial $q_{x_\circ}$. Expansion of the form \eqref{eq.expuq} hold at almost every singular point, and thus, analogously to the previous case we obtain a structure result, that holds for all singular points up to a lower dimensional set: 
\begin{thm}[\cite{FJ18}]
\label{thm.fj18}
Let $u$ be a solution to \eqref{eq.TOP0}. Then, there exists a set $E\subset {\rm Sing}(u)$ of Hausdorff dimension at most $n-2$ such that ${\rm Sing}(u) \setminus E$ is contained in a countable union of $C^2$ $(n-1)$-dimensional manifolds.
\end{thm}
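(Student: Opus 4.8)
The plan is to upgrade the $C^1$-stratification of Theorem~\ref{thm.gp09} to $C^2$ by establishing, at all but a lower-dimensional set of singular points, the second-order expansion \eqref{eq.expuq}, and then feeding it into the same Whitney-extension machinery that produces Theorem~\ref{thm.gp09} from Theorem~\ref{thm.kdiff}, one order higher. First I would dispose of everything that is automatically lower-dimensional: since ${\rm Sing}(u)=\bigcup_{m\in\N}\Gamma_{2m}$ and $\Gamma_{2m}=\bigcup_{\ell=0}^{n-1}\Gamma_{2m}^\ell$, and since by Theorem~\ref{thm.gp09} each $\Gamma_{2m}^\ell$ with $\ell\le n-2$ is contained in a countable union of $\ell$-dimensional manifolds, the set $\bigcup_m\bigcup_{\ell\le n-2}\Gamma_{2m}^\ell$ has Hausdorff dimension at most $n-2$ and is placed into $E$. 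It remains to analyse $\Gamma_{2m}^{n-1}$ for each fixed $m$. There Theorem~\ref{thm.kdiff} gives a unique first blow-up $p_{x_\circ}\in\mathcal{P}_{2m}$, continuous in $x_\circ$, and one checks directly that a $2m$-homogeneous harmonic polynomial which is even in $x_{n+1}$, non-negative on $\{x_{n+1}=0\}$, and invariant along an $(n-1)$-dimensional subspace must equal $p_{x_\circ}(x)=c_{x_\circ}\,{\rm Re}\big((a_{x_\circ}\cdot x'+i x_{n+1})^{2m}\big)$ with $c_{x_\circ}>0$ and $a_{x_\circ}\in\mathbb{S}^{n-1}$; in particular $L(p_{x_\circ})=\{a_{x_\circ}\cdot x'=0\}\cap\{x_{n+1}=0\}$ is $(n-1)$-dimensional, and the content of the theorem is that its normal direction $a_{x_\circ}$ varies in a $C^1$ fashion along $\Gamma_{2m}^{n-1}$.

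The heart of the matter is producing the $(2m+1)$-homogeneous correction. Fixing $m$ and writing $v_{x_\circ}:=u-p_{x_\circ}(\,\cdot-x_\circ)$ — which by Theorem~\ref{thm.kdiff} vanishes to order strictly larger than $2m$ at $x_\circ$ — one introduces monotonicity formulas one degree above \eqref{eq.w2m}: a Weiss-type formula for $v_{x_\circ}$ tuned to frequency $2m+1$, and the associated Monneau-type formula
\[
r\longmapsto \frac{1}{r^{\,n+2(2m+1)}}\int_{\partial B_r}\big(u(x_\circ+\,\cdot\,)-p_{x_\circ}-q\big)^2
\]
over admissible $(2m+1)$-homogeneous harmonic polynomials $q$ (even in $x_{n+1}$ and non-negative on $\{x_{n+1}=0\}\cap\{p_{x_\circ}=0\}$). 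Almost-monotonicity yields that the second blow-up $q_{x_\circ}(x):=\lim_{r\downarrow 0}r^{-(2m+1)}\big(u(x_\circ+rx)-p_{x_\circ}(rx)\big)$ exists, is such a polynomial, and delivers \eqref{eq.expuq}. The main obstacle is exactly here: $v_{x_\circ}$ does \emph{not} solve the thin obstacle problem, because $\Delta v_{x_\circ}=\Delta u$ is a non-positive measure carried by $\Lambda(u)$ and the normal-derivative jump persists across $\{x_{n+1}=0\}$. One must show that these contributions do not spoil the monotonicity computation — which works because $\Lambda(u)$ has vanishing $\mathcal{H}^n$-density at a singular point (the same fact that makes blow-ups at singular points harmonic), but the bookkeeping is delicate and uses the $C^{1,\alpha}$ estimate of Theorem~\ref{thm.caf79} together with the fine structure at singular points. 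A dimension-reduction argument, applied to the stratification of $\Gamma_{2m}^{n-1}$ by the translation-invariant set of $q_{x_\circ}$, then shows that the set of $x_\circ$ where this construction degenerates — i.e. where $q_{x_\circ}$ carries extra translation symmetry, equivalently where the second-order jet fails to be selected continuously — has Hausdorff dimension at most $n-2$; this is why \eqref{eq.expuq} is only asserted up to a lower-dimensional set, which is added to $E$.

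With \eqref{eq.expuq} in hand off a set $E$ of Hausdorff dimension at most $n-2$, and with continuity on each closed piece of a countable decomposition of $\Gamma_{2m}^{n-1}\setminus E$ of the $(2m+1)$-jet of $u$ — whose derivatives of order $\le 2m-1$ vanish, whose order-$2m$ part is $p_{x_\circ}$, and whose order-$(2m+1)$ part is $q_{x_\circ}$ — one finishes exactly as in the proof of Theorem~\ref{thm.gp09}. Whitney's extension theorem produces a $C^{2m+1}$ function $Q$ on the thin space realizing this jet along the piece, so that the piece of $\Gamma_{2m}^{n-1}\setminus E$ is contained in the common zero set $Z$ of $\nabla Q,\dots,\nabla^{2m-1}Q$; since the order-$2m$ derivatives of $Q$ at such a point reproduce those of $p_{x_\circ}=c_{x_\circ}{\rm Re}((a_{x_\circ}\cdot x'+ix_{n+1})^{2m})$, whose degeneracy space is the $(n-1)$-dimensional $L(p_{x_\circ})$, the implicit function theorem shows $Z$ is, near $x_\circ$, contained in a $C^2$ $(n-1)$-dimensional manifold — the one extra order of regularity of $Q$ over the $C^{2m}$ function available in Theorem~\ref{thm.gp09} being precisely what turns $C^1$ into $C^2$. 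Covering $\Gamma_{2m}^{n-1}\setminus E$ by countably many such neighbourhoods and taking the union over $m\in\N$, together with the lower-strata part of $E$ collected at the start, proves the theorem. For the complete argument, including the precise second-order monotonicity formulas and the exact description of the exceptional set, we refer to \cite{FJ18}.
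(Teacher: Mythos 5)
Your proposal follows exactly the route the survey indicates for this result (and that \cite{FJ18} carries out): establish the higher-order expansion \eqref{eq.expuq} at all singular points outside an exceptional set of dimension at most $n-2$, via Weiss/Monneau-type monotonicity formulas one degree above \eqref{eq.w2m} applied to $u-p_{x_\circ}(\cdot-x_\circ)$ (controlling the measure $\Delta u$ on the contact set and using a dimension-reduction argument for the bad set), and then run the Whitney-extension and implicit-function-theorem argument of Theorem~\ref{thm.gp09} with the $(2m+1)$-jet, which yields $C^2$ manifolds in place of $C^1$. This is essentially the same approach as the one the paper refers to, with the remaining technical details correctly deferred to \cite{FJ18}.
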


\subsection{The non-degenerate case}
So far we have been studying the thin obstacle problem with zero obstacle. When solving for an (even) boundary datum 
\[
g\in C^0(\de B_1), \quad g(x', x_{n+1}) = g(x', -x_{n+1})
\]
the problem looks like
\begin{equation}
\label{eq.TOP0_2}
  \left\{ \begin{array}{rcll}
  u & \ge & 0 & \textrm{ on }B_1 \cap \{x_{n+1} = 0\}\\
  \Delta u&=&0 & \textrm{ in } B_1\setminus\left(\{x_{n+1} = 0\}\cap \{u = 0\}\right)\\
  \Delta u & \le & 0 & \textrm{ in } B_1\\
    u& = & g & \textrm{ on } \de B_1,
  \end{array}\right.
\end{equation}

We had reduced to this problem from \eqref{eq.thinobst_intro_2} by subtracting the harmonic even extension of the analytic obstacle $\varphi$. Alternatively, from \eqref{eq.TOP0_2} we can reduce to the case of zero boundary data by subtracting the harmonic extension of $g$ to the unit ball. Thus, we obtain a problem of the form 
\begin{equation}
\label{eq.TOP0_3}
  \left\{ \begin{array}{rcll}
  v & \ge & \varphi & \textrm{ on }B_1 \cap \{x_{n+1} = 0\}\\
  \Delta v&=&0 & \textrm{ in } B_1\setminus\left(\{x_{n+1} = 0\}\cap \{v = \varphi\}\right)\\
  \Delta v & \le & 0 & \textrm{ in } B_1\\
    v& = & 0 & \textrm{ on } \de B_1,
  \end{array}\right.
\end{equation}
that is, a thin obstacle problem with obstacle $\varphi$. Problems \eqref{eq.TOP0_2} and \eqref{eq.TOP0_3} are the same when
\begin{equation}
\label{eq.transf}
\left\{
\begin{array}{ll}
\Delta \varphi = 0& \textrm{ in } B_1\\
\varphi = -g& \textrm{ on } \de B_1.
\end{array}
\right.
\qquad 
\textrm{and }\qquad 
v = u  + \varphi.
\end{equation}

In this setting, we say that problem \eqref{eq.TOP0_3} with $\varphi \in C^{3, 1} (B_1\cap\{x_{n+1} = 0\})$ is non-degenerate if 
\begin{equation}
\label{eq.nondeg}
\Delta_{x'} \varphi \le -c_\circ < 0\textrm{  in $B_1\cap \{x_{n+1} = 0\}\cap\{\varphi > 0\}$},\quad \varnothing\neq \{\varphi > 0\},
\end{equation}
where $\Delta_{x'}$ denotes the Laplacian in the first $n$ coordinates (Laplacian along the thin space). The last condition above is to avoid having a non-active obstacle. Alternatively, in terms of problem \eqref{eq.TOP0_2} we have 
\begin{equation}
\textrm{\eqref{eq.TOP0_2} is non-degenerate } \stackrel{\text{def.}}{\Longleftrightarrow} \varphi_g :  \left\{
\begin{array}{ll}
\Delta \varphi_g = 0& \textrm{ in } B_1\\
\varphi_g = -g& \textrm{ on } \de B_1.
\end{array}
\right. \textrm{satisfies \eqref{eq.nondeg}}.
\end{equation}

In particular, when we deal with concave obstacles, our problem is non-degenerate. In \cite{BFR18}, Barrios, Figalli, and Ros-Oton show that, under this non-degeneracy assumption, we have a better characterization of free boundary points. 
\begin{thm}[\cite{BFR18}]
Let $u$ be a solution to \eqref{eq.TOP0_2}, and suppose that the non-degeneracy condition \eqref{eq.nondeg} holds. Then, there exists a constant $\bar c$ (depending on $c_\circ$) such that for any $x_\circ\in \Gamma(u)\cap B_{1/2}$,
\[
\sup_{B_r (x_\circ)} u \ge \bar c\,r^2,
\]
for all $r\in (0,\textstyle{\frac14})$. 
In particular, if \eqref{eq.nondeg} holds, then
\[
\Gamma(u) = {\rm Reg}(u)\cup \Gamma_2(u),
\]
i.e., the free boundary consists only of regular points and singular points of order 2. 
\end{thm}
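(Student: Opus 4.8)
The plan is to extract the quadratic growth bound $\sup_{B_r(x_\circ)}u\ge\bar c\,r^2$ as the analytic core and then deduce the structural statement $\Gamma(u)={\rm Reg}(u)\cup\Gamma_2(u)$ from it by soft arguments. For the latter, fix $x_\circ\in\Gamma(u)\cap B_{1/2}$ and assume $x_\circ=0$. By Lemma~\ref{lem.Almgren} the frequency $\lambda:=N(0^+,u)$ is finite, and by Theorem~\ref{thm.blowupthm} we have $\lambda\in\{\tfrac32\}\cup[2,\infty)$. By \eqref{eq.intro_growthu} together with the monotonicity of $r\mapsto r^{-2\lambda}\psi(r)$ in Lemma~\ref{lem.Almgren2},
\[
\sup_{B_r}u\le C\Big(\ave_{\de B_r}u^2\Big)^{1/2}\le C\,r^{\lambda}\qquad\text{for small }r.
\]
If $\lambda>2$ this contradicts $\sup_{B_r}u\ge\bar c\,r^2$, so $\lambda\le 2$ and hence $\lambda\in\{\tfrac32,2\}$, i.e. $x_\circ\in{\rm Reg}(u)\cup\Gamma_2(u)$.

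It remains to prove the quadratic growth, and this is where the non-degeneracy enters. Using \eqref{eq.transf} pass to $v=u+\varphi_g$, a solution of the thin obstacle problem \eqref{eq.TOP0_3} with obstacle $\varphi_g$. Since $\varphi_g$ is harmonic and even in $x_{n+1}$, the hypothesis $\Delta_{x'}\varphi_g\le -c_\circ$ on $\{x_{n+1}=0\}\cap\{\varphi_g>0\}$ is equivalent to
\[
\de^2_{x_{n+1}}\varphi_g\ge c_\circ\quad\text{on }\{x_{n+1}=0\}\cap\{\varphi_g>0\},\qquad \de_{x_{n+1}}\varphi_g(x',0)\equiv 0,
\]
i.e. $\varphi_g$ is strictly concave along the thin space wherever it is positive. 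On $\Gamma(u)$ one has $\varphi_g\ge 0$ (as $v\ge 0$ in $B_1$ and $v=\varphi_g$ on the contact set $\Lambda$); the free boundary points with $\varphi_g(x_\circ)=0$ are interior minima of $v$, hence of even frequency $\ge 2$, and they are absorbed into $\Gamma_2(u)$ once the growth bound is known on $\{\varphi_g>0\}$ together with the upper semicontinuity of $x\mapsto N(0^+,u,x)$. So assume $x_\circ=0$ with $\varphi_g(0)>0$, and restrict to a scale $r_0$ on which $\varphi_g>0$ and $\de^2_{x_{n+1}}\varphi_g\ge c_\circ/2$.

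The growth bound is an instance of non-degeneracy: the solution cannot separate from a strictly concave obstacle more slowly than quadratically. I would reduce to a classical-type statement on the thin space. Since trivially $\sup_{B_r(0)}u\ge\sup_{B_r'(0)}\bar u$ with $\bar u:=u|_{\{x_{n+1}=0\}}=(v-\varphi_g)|_{\{x_{n+1}=0\}}$, and $\bar u\ge 0$ vanishes on $\Lambda$ with $0\in\de_{\R^n}\{\bar u>0\}$, it suffices to show $\sup_{B_r'(0)}\bar u\ge c\,r^2$. On $B_{r_0}'(0)\setminus\Lambda$ the trace $\bar v=\bar u+\bar\varphi_g$ is the restriction of a function harmonic in $\R^{n+1}$ and even in $x_{n+1}$, so there $\Delta_{x'}\bar v=-\de^2_{x_{n+1}}v$; combining the strict concavity of $\varphi_g$ along the thin space with the one-sided Signorini condition $-\de^+_{x_{n+1}}v\ge 0$ (and the structure of homogeneous blow-ups), one should obtain a subsolution inequality $\Delta_{x'}\bar u\ge c_0>0$ on $B_{r_0}'(0)\setminus\Lambda$. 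Granting this, the classical quadratic-barrier argument runs on the thin space: for $y'\in\{\bar u>0\}$ near $0$, the function $\bar u-\tfrac{c_0}{2n}|x'-y'|^2$ is subharmonic on the component of $\{\bar u>0\}\cap B_r'(0)$ containing $y'$, is positive at $y'$, and is $\le 0$ on the free-boundary portion of the boundary, so its maximum is attained on $\de B_r'(0)$; letting $y'\to 0$ gives $\sup_{\de B_r'(0)}\bar u\ge\tfrac{c_0}{2n}r^2$, and a covering/scaling argument then yields $\sup_{B_r(x_\circ)}u\ge\bar c\,r^2$ for all $r\in(0,\tfrac14)$.

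The main obstacle is precisely the subsolution inequality $\Delta_{x'}\bar u\ge c_0>0$ near free boundary points: one must show that, quantitatively, the trace of the solution is no more superharmonic than the obstacle on the noncoincidence set — equivalently, that $u=v-\varphi_g$ is suitably concave in $x_{n+1}$ there — which is not automatic and must be extracted from the Signorini sign and the classification of blow-ups; one also has to control how $\Lambda$ accumulates at $x_\circ$ (a priori it may be very thin) and, separately, to handle the borderline free boundary points with $\varphi_g(x_\circ)=0$. Everything else invoked above — Almgren's monotonicity and the blow-up classification, the reduction to $\varphi_g\in C^{3,1}$, interior estimates for harmonic functions, and the $C^{1,1/2}$ regularity of $u$ — is available from the earlier sections.
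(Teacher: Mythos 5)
Your second half is fine and matches the paper: once $\sup_{B_r(x_\circ)}u\ge \bar c\,r^2$ is known, the upper bound $\sup_{B_r(x_\circ)}u\le C r^{\lambda}$ coming from Lemma~\ref{lem.Almgren2} and \eqref{eq.intro_growthu}, together with Theorem~\ref{thm.blowupthm}, rules out $\lambda>2$ and gives $\Gamma(u)={\rm Reg}(u)\cup\Gamma_2(u)$. The problem is the first half, which is the actual content of the theorem: you reduce the quadratic growth to a thin-space subsolution inequality $\Delta_{x'}\bar u\ge c_0>0$ on the non-coincidence set near the free boundary, and you acknowledge yourself that you cannot prove it. This is a genuine gap, not a technicality. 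That inequality is essentially equivalent to a quantitative upper bound on $\de^2_{x_{n+1}}u$ (equivalently, a lower bound on how fast $u$ detaches), i.e.\ it already encodes the conclusion; and trying to ``extract it from the classification of blow-ups'' is circular, since without the growth bound the blow-up at a nearby free boundary point could a priori have any frequency in $[2,\infty)$ (e.g.\ for the zero obstacle the $\tfrac72$-homogeneous solution has $\Delta_{x'}\bar u\to0$ at the free boundary), and the classification gives no uniform constant $c_0$ at a fixed scale. The side treatment of points with $\varphi_g(x_\circ)=0$ is also shaky as stated (in fact, under \eqref{eq.nondeg} such points cannot occur: $v\ge0$ is superharmonic with $v=0$ on $\de B_1$, so an interior zero would force $v\equiv0$, contradicting $\{\varphi_g>0\}\neq\varnothing$), but that is minor compared with the missing subsolution inequality.

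The paper avoids all of this by running the comparison in the \emph{full} $(n+1)$-dimensional ball, using only the superharmonicity of the obstacle and never any second derivatives of $u$: for $\bar x\in\{x_{n+1}=0\}\cap\{v>\varphi\}$ set
\[
w_{\bar x}(x)=v(x)-\varphi(x')-\frac{c_\circ}{2n+2}\left(|x'-\bar x'|^2+x_{n+1}^2\right),
\]
with $v=u+\varphi$ as in \eqref{eq.transf}. Off the contact set $\Delta v=0$, so $\Delta w_{\bar x}=-\Delta_{x'}\varphi-c_\circ\ge0$ by \eqref{eq.nondeg}; moreover $w_{\bar x}<0$ on $\Lambda(v)\setminus\{\bar x\}$ and $w_{\bar x}(\bar x)>0$, so the maximum principle forces $\sup_{\de B_r(\bar x)}w_{\bar x}>0$, and letting $\bar x\to x_\circ\in\Gamma(u)$ yields $\sup_{\de B_r(x_\circ)}(v-\varphi)\ge \frac{c_\circ}{2n+2}r^2$, i.e.\ the desired bound with $\bar c=\frac{c_\circ}{2n+2}$. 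Your difficulty with $\Delta_{x'}\bar u$ is an artifact of restricting the barrier argument to the thin space; keeping the quadratic correction (and the obstacle's Laplacian) in the ambient ball makes the non-degeneracy hypothesis do all the work.
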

\begin{proof}
We prove the result for $v$ satisfying \eqref{eq.TOP0_3} and the proof follows by the transformation \eqref{eq.transf} with $\varphi = \varphi_g$ as in \eqref{eq.nondeg}. 

Let us define for $\bar x  = (\bar x', 0) \in B_{1/2}\cap \{x_{n+1} = 0\}\cap \{v > \varphi\}$,
\[
w_{\bar x}(x', x_{n+1})  = v(x', x_{n+1}) - \varphi(x') - \frac{c_\circ}{2n + 2}\left(|x'-\bar x'|^2 + x_{n+1}^2\right),
\]
where $c_\circ$ is the constant in \eqref{eq.nondeg}. Notice that, since $\Delta v = 0$  outside of the contact set $\Lambda(v)$,
\[
\Delta w_{\bar x} = - \Delta_{x'} \varphi - c_\circ \ge 0,\quad\textrm{in}\quad B_r(\bar x)\setminus \Lambda(v). 
\]
On the other hand, $w_{\bar x}(\bar x', 0) > 0$ and $w < 0$ on $\Lambda(v)$. By maximum principle, we must have 
$
\sup_{\de B_r(\bar x)} w_{\bar x} > 0.
$ Letting $\bar x \to x_\circ\in \Gamma(u)$ we deduce 
\[
\sup_{\de B_r(x_\circ)} w_{x_\circ} \ge 0,
\]
which implies the desired result. 

Finally, since the growth at the free boundary is at least quadratic, there cannot be any blow-up at a free boundary point with homogeneity greater than 2. 
\end{proof}

In this case, therefore, the non-regular part of the free boundary consists exclusively of singular points of order 2. In particular, in Theorem~\ref{thm.gp09} we have instead a single $C^1$ $\ell$-dimensional manifold covering (locally) the whole of $\Gamma_{2}^{\ell}(u)$. We can also establish a more refined version of Theorem~\ref{thm.fj18}, 

\begin{thm}[\cite{FJ18}]
\label{thm.fj18_2}
Let $u$ be a solution to \eqref{eq.TOP0}, and suppose that the non-degenerate condition \eqref{eq.nondeg} holds. Then, 
\begin{enumerate}[label=(\roman*)]
\item $\Gamma_2^{0}(u)$ is isolated in ${\rm Sing}(u) = \Gamma_2^0(u) \cup \cdots \cup \Gamma_2^{n-1}(u)$.
\item There exists an at most countable set $E_{1} \subset \Gamma_2^1(u)$ such that $\Gamma_2^1(u) \setminus E_{1}$ is locally contained in a single one-dimensional $C^{2}$ manifold. 
\item For each $\ell \in \{2,\dots,n-1\}$, there exists a set $E_m \subset \Gamma_2^\ell(u)$ of Hausdorff dimension at most $\ell-1$ such that $\Gamma_2^\ell(u) \setminus E_\ell$ is locally contained in a single $\ell$-dimensional $C^{2}$ manifold.
\end{enumerate}
\end{thm}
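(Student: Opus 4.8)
The plan is to combine the second part of the higher order Boundary Harnack Principle (Proposition~\ref{prop.hobhp}) with the expansions \eqref{eq.expu} and \eqref{eq.expuq}, exactly as in the proofs of Theorem~\ref{thm.gp09} and Theorem~\ref{thm.fj18}, but exploiting the fact that the non-degeneracy condition \eqref{eq.nondeg} forces the entire singular set to be $\Gamma_2(u)$ with \emph{single} (not countable unions of) manifolds. First I would reduce, via the transformation \eqref{eq.transf}, to problem \eqref{eq.TOP0_3} with $\varphi\in C^{3,1}$ satisfying \eqref{eq.nondeg}; by the preceding theorem ${\rm Sing}(u)=\Gamma_2^0(u)\cup\cdots\cup\Gamma_2^{n-1}(u)$. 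At each $x_\circ\in\Gamma_2(u)$ the (unique) blow-up is $p_{x_\circ}\in\mathcal P_2$, i.e.\ a $2$-homogeneous harmonic polynomial nonnegative on the thin space, and $\dim L(p_{x_\circ})=\ell$ means that, after a rotation, $p_{x_\circ}$ depends only on $\ell'=n-\ell$ of the thin variables plus $x_{n+1}$. The continuity of $x_\circ\mapsto p_{x_\circ}$ from Theorem~\ref{thm.kdiff} and the fact that $\dim L(p_{x_\circ})$ is upper semicontinuous is what will let us go from a countable union to a single manifold.

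For part (i), I would argue that a point in $\Gamma_2^0(u)$ has blow-up $p_{x_\circ}$ whose zero set on the thin space is a single point $\{x_\circ\}$ (the polynomial is, up to rotation, a nondegenerate quadratic form on all of $\R^n$, negative definite away from $x_{n+1}$). Using the nondegenerate growth $\sup_{B_r(x_\circ)}u\ge\bar c r^2$ together with the expansion $u(x)=p_{x_\circ}(x-x_\circ)+o(|x-x_\circ|^2)$, one shows the contact set near $x_\circ$ is exactly $\{x_\circ\}$, so $x_\circ$ is isolated in $\Gamma_2^0(u)$; and since points of $\Gamma_2^\ell(u)$ with $\ell\ge1$ converging to $x_\circ$ would force, by continuity of $p$, the limit blow-up to have a translation-invariant direction, $x_\circ$ is in fact isolated in all of ${\rm Sing}(u)$.

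For parts (ii) and (iii), the strategy is the Whitney extension / implicit function theorem argument of \cite{GP09}, upgraded using \eqref{eq.expuq}: on $\Gamma_2^\ell(u)$ we have, for each $\ell$-plane direction, a second-order expansion with a $C^{2,?}$-type control coming from Proposition~\ref{prop.hobhp} (the $v_1/U_0\in C^{k-1,\alpha}$ statement applied with $k$ large, using that the contact set is itself regular in the nondegenerate case). The expansion \eqref{eq.expuq} gives, after projecting onto the $\ell$-dimensional translation-invariant directions, a function whose $2$-jet is continuous along $\Gamma_2^\ell(u)$, so Whitney's extension theorem produces a $C^2$ function whose zero set (via the implicit function theorem, since the Hessian of $p_{x_\circ}$ is nondegenerate transverse to $L(p_{x_\circ})$) is a single $\ell$-dimensional $C^2$ manifold locally containing $\Gamma_2^\ell(u)$ — away from the set $E_\ell$ where the finer expansion \eqref{eq.expuq} fails. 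The dimension bound $\dim_{\mathcal H}E_\ell\le\ell-1$ (and countability for $\ell=1$, i.e.\ $\dim_{\mathcal H}E_1\le 0$) comes from the same measure-theoretic argument as in Theorem~\ref{thm.fj18}: the bad set consists of points where the $(2m+1)$-homogeneous correction $q_{x_\circ}$ is forced to vanish in too many directions, and a Federer-type dimension reduction on the stratification within $\Gamma_2^\ell(u)$ gives the claimed codimension-one bound inside the stratum.

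The main obstacle I expect is making rigorous the passage from "countable union of manifolds'' (Theorem~\ref{thm.gp09}, Theorem~\ref{thm.fj18}) to a \emph{single} manifold in the nondegenerate regime: this hinges on showing that within a fixed stratum $\Gamma_2^\ell(u)$ there is no accumulation of the kind that forces multiple sheets, which in turn uses crucially that all singular points have the \emph{same} frequency $2$ (so there is no accumulation of lower-order points onto higher-order ones, the phenomenon that in the general case \eqref{eq.TOP0} necessitates countable unions). Verifying that the $C^2$ regularity survives this — i.e.\ that the Whitney data is genuinely $C^2$ and not merely $C^{1,\alpha}$ along the whole stratum minus $E_\ell$ — is the delicate point, and it is exactly where the higher-order expansion \eqref{eq.expuq} and the second part of Proposition~\ref{prop.hobhp} must be used with care, controlling the error $q_{x_\circ}$ uniformly in $x_\circ$ on compact subsets of $\Gamma_2^\ell(u)\setminus E_\ell$.
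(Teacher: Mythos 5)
First, a point of reference: the survey does not prove Theorem~\ref{thm.fj18_2} at all --- it is quoted from \cite{FJ18} --- so your sketch has to stand on its own, and measured against the actual strategy of \cite{FJ18} it has two concrete gaps. Part (i) is essentially fine: if $\dim L(p_{x_\circ})=0$ then $p_{x_\circ}(x',0)=x'^{T}Ax'$ with $A$ positive definite (not ``negative definite away from $x_{n+1}$''), and \eqref{eq.expu} alone forces the contact set near $x_\circ$ to reduce to $\{x_\circ\}$, so $x_\circ$ is isolated in $\Gamma(u)\supset{\rm Sing}(u)$; the non-degenerate growth is not even needed there. The first real problem is in (ii)--(iii): your proposed source of $C^2$-type control, namely the second part of Proposition~\ref{prop.hobhp} ``applied with $k$ large, using that the contact set is itself regular in the nondegenerate case'', is not available. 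Near a singular point the contact set has zero $\mathcal{H}^n$-density (that is the definition of ${\rm Sing}(u)$, non-degenerate or not), so $\Lambda(u)$ is in no sense a $C^{k,\alpha}$ domain of the thin space there, and no positive comparison function $v_2=\partial_\tau u$ exists; the higher order boundary Harnack principle is a tool for \emph{regular} points only. In \cite{FJ18} the expansion \eqref{eq.expuq} and the exceptional sets are instead produced by Weiss/Monneau/Almgren-type monotonicity formulas applied to $u-p_{x_\circ}$ and by classifying the resulting \emph{second blow-ups}, which solve a constrained (``very thin'') obstacle problem on $L(p_{x_\circ})$; the stratum-wise bounds $\dim_{\mathcal H}E_\ell\le \ell-1$ and the countability of $E_1$ come from Federer-type dimension reduction applied to those second blow-ups, not from the vague statement that ``$q_{x_\circ}$ is forced to vanish in too many directions''. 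As written, your argument neither produces \eqref{eq.expuq} nor the refined dimension bounds.

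The second gap is the one you yourself flag but do not close: the passage from countably many manifolds to a \emph{single} one. Having all singular points of frequency $2$ is not by itself sufficient. The countable union in Theorem~\ref{thm.gp09} arises because the error $o(|x-x_\circ|^{2})$ in \eqref{eq.expu} has no uniform modulus on the stratum, so Whitney's theorem is applied on an exhaustion by compacta where the convergence of $u(x_\circ+r\,\cdot)/r^{2}$ to $p_{x_\circ}$ is uniform. What the non-degeneracy \eqref{eq.nondeg} actually buys --- and what has to be proved --- is exactly this uniformity: the quadratic growth $\sup_{B_r(x_\circ)}u\ge \bar c\,r^{2}$ bounds the polynomials $p_{x_\circ}$ uniformly away from zero (and Theorem~\ref{thm.kdiff} plus compactness bounds them from above), and a compactness argument then yields a modulus of continuity in \eqref{eq.expu}, and correspondingly in \eqref{eq.expuq}, which is uniform on compact subsets of $\Gamma_2(u)$; only then do Whitney extension and the implicit function theorem give one $C^1$, respectively $C^2$, sheet. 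Without this quantitative step your construction still only yields countably many manifolds, i.e.\ no improvement over Theorems~\ref{thm.gp09} and \ref{thm.fj18}.
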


\subsection{An alternative approach to regularity: the epiperimetric inequality}
\label{ssec.epi}
A mo\-dern and prolific tool to tackle regularity questions for the free boundary in the thin obstacle problem has been the use of epiperimetric inequalities. Let us very briefly introduce the concept and some interesting consequences. In particular, we present a very recent result in which this technique is used to improve Theorem~\ref{thm.gp09} to an explicit modulus of continuity for the normal to the manifold containing singular points. 

In order to study singular points (and deduce Theorem~\ref{thm.gp09}) Garofalo and Petrosyan in \cite{GP09} prove the monotonicity of the Weiss energy along sequences of blow-ups, thus generalizing the Weiss monotonicity formula originally introduced in the classical obstacle problem (see \cite{Wei99}). In particular, they show that if $0$ is a free boundary point of order $\lambda$ (cf. \eqref{eq.w2m} where only even frequencies were involved) and $u$ is a solution to the thin obstacle problem \eqref{eq.TOP0}, then the \emph{Weiss' boundary adjusted energy}
\[
W_\lambda (r, u) := \frac{1}{r^{n-1+2\lambda}}\int_{B_r}|\nabla u|^2 - \frac{\lambda}{r^{n+2\lambda}}\int_{\partial B_r} u^2
\]
satisfies that $r\mapsto W_\lambda(r, u)$ is non-decreasing for $r \in (0, 1)$. More precisely, we have that 
\[
\frac{d}{dr} W_\lambda(r, u) = \frac{2}{r^{n+1+2\lambda}}\int_{\partial B_r} (x\cdot \nabla u - \lambda u)^2.
\]

The Weiss energy at scale $r$ is related to the speed of convergence of a solution to its blow-up (or, as seen in the previous expression, the closeness to a $\lambda$-homogeneous function), and thus, it is not surprising that a detailed study of such energy could lead to a better understanding of free boundary points.

A quantification of the value of $W_\lambda(r, u)$ (the type of convergence as $r\downarrow 0$) leads to a better understanding of free boundary points of that given frequency. Such quantification can be made by means of epiperimetric inequalities. 

For example, at regular points (namely, when the blow-up is $\frac32$-homogeneous), one can prove that, for some (explicit) dimensional constant $\kappa$, if $c\in H^1(B_1)$ and $z$ is $\frac32$-homogeneous, $z = c$ on $\partial B_1$, even (in $x_{n+1}$) and non-negative on the thin space, then there exists some $v$ even and non-negative on the thin space with $v = c$ on $\partial B_1$ such that
\[
W_{\frac32}(1, v) \le (1-\kappa) W_{\frac32}(1, z),
\]
namely, we have an explicit improvement of homogeneity\footnote{The epiperimetric inequality can be presented in different statements. The statement presented here is the one by Colombo, Spolaor, and Velichkov, \cite{CSV19}, and similar statements with extra assumptions can also be found on \cite{FS10, GPS16}.}. This is called an epiperimetric inequality, and from here the convergence of $W_\lambda(r, u)$ can be quantified (in this case, it decays like a power as $r\downarrow 0$) and one can deduce the free boundary regularity at regular points.
(See \cite{FS10, GPS16, CSV19}.)   This approach is the one used, for example, in \cite{GPS16, GP16} in order to prove regularity of the free boundary at regular points for the variable coefficient thin obstacle problem, and for the fractional obstacle problem with subcritical drift, respectively; and even in \cite{Shi18} in the context of the parabolic version of the thin obstacle problem. 

In \cite{CSV19}, Colombo, Spolaor, and Velichkov are able to establish, for the first time, an epiperimetric inequality that is valid at all singular points. In particular, they establish a logarithmic-type epiperimetric inequality at even frequency points: under the previous assumptions on $c$ and $z$ but with homogeneity $2m$ (instead of $\frac32$), there exists some $h$ even, non-negative on the thin space, and with $h = c$ on $\partial B_1$ such that 
\[
W_{2m}(1, h) \le W_{2s}(1, z) (1-\kappa_m|W_{2m}(z)|^\gamma),
\]
where $\kappa_m$ is a dimensional constant that depends also on $m$, and $\gamma = \frac{n-1}{n+1}$. This is called a logarithmic-type epiperimetric inequality, since it can only lead to logarithmic decay for the Weiss energy. The proof of this result in \cite{CSV19} is done by a direct method: the authors are able to construct a competitor $h$ from the Fourier decomposition of the trace of $c$ (or $z$). 

From here, they quantify the convergence of the rescaling of $u$ to its blow-up (with a logarithm), and as a consequence, they are able to establish an explicit modulus of continuity for the map $x_\circ\ni \Gamma_{2m}(u) \mapsto p_{x_\circ}$ appearing in Theorem~\ref{thm.kdiff}.  Proceding with the Whitney's extension theorem as in the proof of Theorem~\ref{thm.gp09} they are able to establish the $C^{1,\log^{-\epsilon_\circ}}$ regularity\footnote{We say that a manifold is $C^{1,\log^{-\epsilon_\circ}}$ if the normal derivative has as modulus of continuity (up to a constant) $\sigma(t) = \log^{-\eps_\circ}(1/t)$ for some $\eps_\circ > 0$ fixed.} of the covering manifolds:

\begin{thm}[\cite{CSV19}]
\label{thm.CSV19}
Let $u$ be a solution to \eqref{eq.TOP0}. Then, the set $\Gamma_{2m}^\ell(u)$ (see \eqref{eq.gammaell}) for $\ell\in \{0,\dots,n-1\}$, is contained in a countable union of $C^{1,\log^{-\epsilon_\circ}}$ $\ell$-dimension manifolds. 
\end{thm}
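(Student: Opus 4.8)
The result is the singular-set analogue of Theorem~\ref{thm.gp09}, with the $C^1$ regularity of the covering manifolds upgraded to $C^{1,\log^{-\epsilon_\circ}}$. The strategy is to run the same Whitney extension plus implicit function theorem machinery that yields Theorem~\ref{thm.gp09}, but feed it a quantitative rate of convergence of the rescalings $u(x_\circ+r\cdot)/r^{2m}$ to the blow-up $p_{x_\circ}$, rather than the merely qualitative convergence coming from Almgren and Weiss monotonicity alone. The source of quantification is the logarithmic epiperimetric inequality of Colombo--Spolaor--Velichkov stated just above: at a singular point of frequency $2m$, one has, for a competitor $h$ with $h=c$ on $\partial B_1$, even and nonnegative on the thin space,
\[
W_{2m}(1,h)\le W_{2m}(1,z)\bigl(1-\kappa_m|W_{2m}(z)|^\gamma\bigr),\qquad \gamma=\tfrac{n-1}{n+1}.
\]

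\emph{Step 1: From the epiperimetric inequality to a decay rate for the Weiss energy.} First I would plug the competitor into the minimality of $u$ (comparing $\int|\nabla u|^2$ on $B_r$ with the $2m$-homogeneous rescaling of the competitor) to derive a differential inequality for $e(r):=W_{2m}(r,u)\ge 0$ of the form $e(r)\lesssim -r\,e'(r)\,e(r)^{-\gamma}$, up to harmless error terms; this is the standard way the epiperimetric inequality is exploited (as in the regular-point case, where a power decay is obtained). Integrating this ODE inequality yields not a power rate but $e(r)\le C(\log(1/r))^{-1/\gamma}$ — the logarithmic decay — which in turn, via $\frac{d}{dr}W_{2m}(r,u)=\tfrac{2}{r^{n+1+4m}}\int_{\partial B_r}(x\cdot\nabla u-2mu)^2$ and integration, controls $\|u(x_\circ+r\,\cdot)/r^{2m}-p_{x_\circ}\|$ by a power of $\log(1/r)$.

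\emph{Step 2: From the decay rate to a modulus of continuity for $x_\circ\mapsto p_{x_\circ}$.} Using the Weiss and Monneau monotonicity formulas of \cite{GP09} together with the quantitative closeness from Step~1, a standard argument (compare $M_{2m}$ at two nearby points, or use the non-degeneracy at singular points) upgrades continuity of $x_\circ\in\Gamma_{2m}\mapsto p_{x_\circ}$ to an explicit modulus: $|p_{x_\circ}-p_{y_\circ}|\lesssim \sigma(|x_\circ-y_\circ|)$ with $\sigma(t)=\log^{-\epsilon_\circ}(1/t)$ for some $\epsilon_\circ>0$ depending on $n,m$.

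\emph{Step 3: Whitney extension and stratification.} With the expansion $u(x)=p_{x_\circ}(x-x_\circ)+o(|x-x_\circ|^{2m})$ of Theorem~\ref{thm.kdiff} now carrying a $\log^{-\epsilon_\circ}$-modulus on the coefficients $x_\circ\mapsto p_{x_\circ}$, one applies the Whitney extension theorem to produce, locally, a $C^{1,\log^{-\epsilon_\circ}}$ field whose zero set (cut out by the implicit function theorem, using that on $\Gamma_{2m}^\ell$ exactly $\ell$ of the relevant second-order derivatives of $p_{x_\circ}$ vanish in a non-degenerate way) is an $\ell$-dimensional manifold of class $C^{1,\log^{-\epsilon_\circ}}$ containing $\Gamma_{2m}^\ell(u)$. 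Because lower-frequency points (e.g.\ order $2$) can accumulate onto higher-frequency ones, one only gets a countable union of such manifolds, exactly as in Theorem~\ref{thm.gp09}. The main obstacle is Step~1: extracting the precise logarithmic decay of $W_{2m}(r,u)$ from the epiperimetric inequality requires carefully tracking the error terms in the comparison (the obstacle/thin-space constraint contributes terms one must absorb) and solving the resulting ODE inequality with the correct exponent $\gamma=\tfrac{n-1}{n+1}$; everything downstream is the by-now routine Whitney/implicit-function packaging already used for the $C^1$ version.
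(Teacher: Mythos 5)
Your plan follows exactly the route the paper itself sketches (and attributes to \cite{CSV19} rather than proving): the logarithmic epiperimetric inequality yields $W_{2m}(r,u)\lesssim(\log(1/r))^{-1/\gamma}$, which quantifies the convergence of the rescalings to $p_{x_\circ}$, gives the explicit $\log^{-\epsilon_\circ}$ modulus of continuity for $x_\circ\mapsto p_{x_\circ}$, and is then fed into the same Whitney extension and implicit function theorem packaging as in Theorem~\ref{thm.gp09}. The only slip is the sign in your differential inequality in Step 1: since $r\mapsto W_{2m}(r,u)$ is nondecreasing and nonnegative at a singular point, the inequality should read $e(r)^{1+\gamma}\lesssim r\,e'(r)$ (not $-r\,e'(r)$), which upon integration gives precisely the stated logarithmic decay.
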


Their use of the logarithmic-type epiperimetric inequality does not end here, see Theorem~\ref{thm.epiCSV} below for another interesting consequence of their result.

\section{Other points}
\label{sec.9}
The free boundary contains, in general, other points different from \emph{regular} and \emph{singular}. Even in two dimensions ($n = 1$) one can perform the simple task of manually classifying all the possible homogeneities that an homogeneous solution to the thin obstacle problem (with zero obstacle) can present (see \cite[Proposition A.1]{FS18}) .  

Indeed, for $n = 1$ homogeneous solutions to the thin obstacle problem must have homogeneity belonging to the set 
\[
\left\{2m, 2m-\frac12, 2m+1\right\}_{m\in \N}. 
\]

Solutions with homogeneity $2m$ are harmonic polynomials, non-negative on the thin space. On the other hand, homogeneous solutions with homogeneity $2m-\frac12$ or $2m+1$ are of the form 
\[
{\rm Re}\left((x_1 + i|x_2|)^{2m-\frac12}\right)\qquad\textrm{and}\qquad {\rm Im}\left((x_1 + i|x_2|)^{2m+1}\right),\quad \textrm{for $m \in \N$}.
\]
Notice that when the homogeneity is $2m-\frac12$ we have \emph{half-space} solutions on the thin space. Indeed, in this case, solutions restricted to $x_2 = 0$ are of the form $u(x_1, 0) = (x_1)_+^{2m-1/2}$. 

Given that no other homogeneities can appear in dimension 2, one can show that, in any dimension, the previous homogeneities comprise all of the free boundary, up to a lower dimensional set. It is for this reason that we separate the possible homogeneities of the free boundary as
\begin{equation}
\label{eq.cffb}
\Gamma(u) = \Gamma_{3/2}(u) \cup \Gamma_{\rm even}(u) \cup \Gamma_{\rm odd}(u) \cup \Gamma_{\rm half}(u) \cup \Gamma_*(u), 
\end{equation}
where $\Gamma_{3/2}(u) = {\rm Reg}(u)$ are regular points;  $\Gamma_{\rm even}(u) = {\rm Sing}(u)$ are singular points; $\Gamma_{\rm odd}(u)$ denotes the set of points with odd homogeneity $2m+1$ for $m\in \N$;  $\Gamma_{\rm half}(u)$ are the points with homogeneity $2m+\frac32$ for $m \in \N$; and $\Gamma_*(u)$ are the rest of possible free boundary points (in particular, $\Gamma_*(u) = \varnothing$ if $n=1$, and we will see that ${\rm dim}_{\mathcal{H}}(\Gamma_*(u))\le n-2$ in general). 

\subsection{The set $\Gamma_{\rm odd}(u)$} The free boundary points belonging to $\Gamma_{\rm odd}(u)$ are those with odd homogeneity $2m+1$ for $m\in \N$. They are analogous to the singular set, in the sense that in this case, points belonging to $\Gamma_{\rm odd}(u)$ can also be characterized via the density of the contact set: these points have density 1. 

They are not known to exist (no single example has been constructed so far). Notice that the homogeneous solutions presented above are vanishing identically on the thin space, and thus they do not have a free boundary. 

In fact, in dimension $n = 1$, if such a point existed its blow-up would be of the form 
\begin{equation}
\label{eq.imform}
{\rm Im}\left((x_1 + i|x_2|)^{2m+1}\right),\quad \textrm{for $m \in \N$}.
\end{equation}
(Think, for example, of the $x_2$-even extension of the harmonic polynomial $x_2^3-3x_1^2x_2 $ for $x_2 \ge 0$.) However, solutions of the form \eqref{eq.imform} have non-vanishing normal derivative on the thin space outside of the origin, whereas a free boundary point can be approximated by points with vanishing normal derivative. By finding the blow-up along the sequence of points with vanishing normal derivative, from the $C^1$ convergence of blow-ups we reach a contradiction: free boundary points with odd homogeneity do not exist in dimension $n = 1$. 

The set of points belonging to $\Gamma_{\rm odd}(u)$ has been studied in a recent work by Figalli, Ros-Oton, and Serra \cite[Appendix B]{FRS19}, and previously in \cite{FS18}.

\begin{prop}[Characterization of points in $\Gamma_{\rm odd}(u)$]
\label{prop.odd}
Let $u$ be a solution to \eqref{eq.TOP0}. Then, the set of points with odd homogeneity, $\Gamma_{\rm odd}(u)$, can be equivalently characterized by 
\begin{equation}
\label{eq.oddch}
\Gamma_{\rm odd}(u) := \left\{x\in \Gamma(u) : \limsup_{r\downarrow 0} \frac{\mathcal{H}^n(\Lambda(u)\cap B_r(x))}{\mathcal{H}^n(B_r(x)\cap\{x_{n+1} = 0\})} = 1 \right\}.
\end{equation}
That is, points with odd homegeneity are those where the contact set has density 1. 
\end{prop}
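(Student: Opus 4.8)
The plan is to establish the two inclusions between the set $\Gamma_{\rm odd}(u)$, defined as the union of $\Gamma_{2m+1}(u)$ over $m\in\N$, and the density-one set on the right-hand side of \eqref{eq.oddch}. The key tool is the classification of blow-ups from Corollary~\ref{cor.strongconv} and Theorem~\ref{thm.blowupthm} together with the structure of homogeneous solutions: a blow-up $u_0$ at a free boundary point of homogeneity $\lambda$ is a global $\lambda$-homogeneous solution to \eqref{eq.TOP0}, and $\Delta u_0 = 2\,\de^+_{x_{n+1}}u_0\,\mathcal{H}^n\mres\{x_{n+1}=0\}\le 0$ is a nonpositive measure supported on $\Lambda(u_0)$. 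As in the proof of the characterization of singular points, if $u_0$ is harmonic across $\{x_{n+1}=0\}$ then it is a homogeneous harmonic polynomial, even in $x_{n+1}$, and nonnegative on the thin space. The difference here is that we must detect when the \emph{complement} of the contact set, not the contact set itself, has vanishing density.

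First I would prove that odd-homogeneity points have contact density one. Let $0\in\Gamma_{2m+1}(u)$ and take any blow-up $u_0$ along a sequence $r_j\downarrow 0$, converging strongly in $C^1_{\rm loc}(\overline{\R^{n+1}_+})$ by Corollary~\ref{cor.strongconv}. Since $u_0$ is a $(2m+1)$-homogeneous global solution, I claim $u_0$ must be a harmonic polynomial vanishing identically on the thin space — of the form $\mathrm{Im}\big((x_1+i|x_{n+1}|)^{2m+1}\big)$ in the model case. The cleanest way: the negative measure $\mu:=-\Delta u_0 = -2\de^+_{x_{n+1}}u_0\,\mathcal{H}^n\mres\{x_{n+1}=0\}$ is $(2m-1)$-homogeneous; one shows that a $(2m+1)$-homogeneous solution with a nontrivial contact set would have even frequency (the argument of the singular-points proposition applied to $-\de_{x_{n+1}}u_0\ge 0$, a nonnegative $(2m)$-homogeneous function harmonic off the thin space, forces it to be a harmonic polynomial and hence $u_0$ polynomial with even homogeneity — contradicting $2m+1$ odd). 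Therefore $\de^+_{x_{n+1}}u_0\equiv 0$, $u_0$ is harmonic, and being an odd-homogeneity harmonic polynomial even in $x_{n+1}$ and nonnegative on $\{x_{n+1}=0\}$, it must vanish identically on the thin space. Consequently $\Lambda(u_0)\cap\{x_{n+1}=0\}$ is the whole thin space, so $\mathcal{H}^n(\{x_{n+1}=0\}\cap B_1\setminus\Lambda(u_0))=0$, and by the strong convergence of $u_{r_j}\to u_0$ (the positivity set is open and passes to the limit), $\mathcal{H}^n(\Lambda(u)\cap B_{r_j})/\mathcal{H}^n(B_{r_j}\cap\{x_{n+1}=0\})\to 1$ along $r_j$, giving $\limsup_{r\downarrow 0}=1$.

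For the converse, suppose $0\in\Gamma(u)$ has $\limsup_{r\downarrow0}$ of the density ratio equal to $1$, and take $r_j\downarrow 0$ realizing the limsup, with $u_{r_j}\to u_0$ as in Corollary~\ref{cor.strongconv}. Then $\mathcal{H}^n(\{x_{n+1}=0\}\cap B_1\setminus\Lambda(u_{r_j}))\to 0$, and since $-\de_{x_{n+1}}u_0$ vanishes on $\{x_{n+1}=0\}\setminus\Lambda(u_0)$ and $\Lambda(u_0)$ captures almost all of the thin space, the measure $\Delta u_0$ is concentrated on a set of zero $\mathcal{H}^n$-measure, hence of zero harmonic capacity — so $u_0$ is harmonic across $\{x_{n+1}=0\}$, thus a homogeneous harmonic polynomial, even in $x_{n+1}$, nonnegative on the thin space. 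Its homogeneity $\lambda=N(0^+,u)$ is therefore a nonnegative integer, and by Theorem~\ref{thm.blowupthm} $\lambda\notin\{3/2\}$ and $\lambda\ge 2$. It remains to rule out even $\lambda$: if $\lambda=2k$, then $u_0\in\mathcal{P}_{2k}$, and by Cauchy–Kovalevskaya $u_0$ does not vanish identically on the thin space, so $\mathcal{H}^n(\{u_0=0\}\cap\{x_{n+1}=0\})=0$; but this means $\Lambda(u_0)$ has zero density, contradicting that $\mathcal{H}^n(B_1\cap\{x_{n+1}=0\}\setminus\Lambda(u_{r_j}))\to 0$ forces $\Lambda(u_0)$ to have full density. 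Hence $\lambda$ is odd, $0\in\Gamma_{\rm odd}(u)$.

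The main obstacle I anticipate is the rigidity step in the first direction: showing that a $(2m+1)$-homogeneous global solution with nonempty contact set cannot exist — equivalently, that the only odd-homogeneity blow-ups are those vanishing on the thin space. One must carefully run the eigenfunction/monotonicity-of-eigenvalues argument on $-\de_{x_{n+1}}u_0$ (a nonnegative function, harmonic off the thin space, $2m$-homogeneous, vanishing on $\{x_{n+1}=0\}\setminus\Lambda(u_0)$), deduce it is a harmonic polynomial, integrate up to conclude $u_0$ is polynomial, and observe the parity clash. The interchange of limits — that the contact-density information passes to the blow-up and back — also needs the strong $C^1$ convergence and the openness of $\{u>0\}$ on the thin space, together with the fact that $\mathcal{H}^n(\Gamma(u_0)\cap\{x_{n+1}=0\})=0$ for polynomial blow-ups; these are routine given Corollary~\ref{cor.strongconv} but should be stated with care.
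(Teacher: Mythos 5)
Both directions of your proposal contain genuine gaps, and in each case the missing step is precisely the one the paper handles with a different idea. In the direction ``odd homogeneity $\Rightarrow$ density one'', your rigidity step is not established: you treat $-\de_{x_{n+1}}u_0$ as ``a nonnegative $(2m)$-homogeneous function harmonic off the thin space'', but its sign is only known \emph{on} the thin space (Signorini condition), not in the interior; the tools that made the analogous argument work in the classification of blow-ups (Theorem~\ref{thm.blowupthm}) --- convexity from semiconvexity and, crucially, the \emph{sublinear} growth of the $(\lambda-1)$-homogeneous derivatives, which lets one identify them with their Poisson extensions --- are only available for $\lambda<2$ and fail for $\lambda=2m+1\ge 3$. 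Likewise, ``the argument of the singular-points proposition'' cannot be invoked: there harmonicity of the blow-up came from the zero-density hypothesis (the support of $\Delta u_0$ had zero capacity), which is exactly what you do not have here, and the punchline ``hence $u_0$ is a polynomial with even homogeneity'' does not follow from $\de_{x_{n+1}}u_0$ being polynomial. The paper proves the rigidity quite differently: it tests $u_0$ against the explicit global solution $P=\sum_j {\rm Im}\big((x_j+i|x_{n+1}|)^{2m+1}\big)$, whose normal derivative on the thin space is strictly signed away from the origin, and uses the common $(2m+1)$-homogeneity together with a radial test function so that all bulk terms cancel, yielding $\int_{\{x_{n+1}=0\}}u_0\,\de^+_{x_{n+1}}P\,\Psi=0$ and hence $u_0\equiv0$ on the thin space. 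Note also that your final step there is too weak: uniform convergence of $u_{r_j}$ to a function vanishing on the thin space does \emph{not} force the contact sets to fill up (small positive values are allowed); the paper instead uses that $\de_{x_{n+1}}u_0$ is a nonzero $2m$-homogeneous polynomial, nonvanishing $\mathcal H^n$-a.e.\ on the thin space, together with the $C^1$ convergence: wherever $\de^+_{x_{n+1}}u_{r_j}<0$ the point must lie in $\Lambda(u_{r_j})$.

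In the converse direction your capacity argument rests on a false premise: when the contact set has density one, $\Delta u_0=2\,\de^+_{x_{n+1}}u_0\,\mathcal H^n\mres\{x_{n+1}=0\}$ is carried by $\Lambda(u_0)$, which is now a set of \emph{full} measure, so one cannot conclude that the Laplacian is concentrated on a null set and deduce harmonicity; indeed blow-ups at odd-frequency points are in general \emph{not} harmonic across the thin space (e.g.\ the even extension of $x_2^3-3x_1^2x_2$). Your intermediate conclusion is moreover self-defeating: a globally harmonic, even-in-$x_{n+1}$, odd-degree homogeneous polynomial that is nonnegative on the thin space must vanish there by parity and then vanish identically by Cauchy--Kovalevskaya, so your argument would exclude every homogeneity, not just the even ones. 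What the density-one hypothesis actually yields --- and what the paper uses --- is that $u_0\equiv0$ on the thin space: if $u_0>0$ on some thin ball, uniform convergence would place that ball inside $\{x_{n+1}=0\}\setminus\Lambda(u_{r_j})$, whose measure tends to zero. Then the restriction of $u_0$ to $\{x_{n+1}\ge0\}$ extends by odd reflection to a homogeneous harmonic function, hence a polynomial of integer degree; even degree is excluded because even-frequency blow-ups are nonzero a.e.\ on the thin space (zero density of the contact set), and degree one is excluded since $N(0^+,u)\ge\frac32$, so the degree is odd. With these two repairs your outline collapses onto the paper's proof, but as written both key steps fail.
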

\begin{proof}
Let us suppose that $0\in \Gamma(u)$ fulfills definition \eqref{eq.oddch}, that is, we can take a sequence $r_j\downarrow 0$ such that 
\begin{equation}
\label{eq.singlim_2}
\frac{\mathcal{H}^n(\Lambda(u)\cap B_{r_j})}{\mathcal{H}^n(B_{r_j}\cap\{x_{n+1} = 0\})} \to 1.
\end{equation}

Consider the sequence $u_{r_j}$ (defined by \eqref{eq.blowupseq}), and after taking a subsequence if necessary, let us assume $u_{r_j}\to u_0$ uniformly in $B_1$. In particular, $u_0$ vanishes identically on the thin space. Since it is homogeneous, and harmonic on $x_{n+1} > 0$, it must be a polynomial. It cannot have even homogeneity, since by the discussion on singular points it would have zero density. Thus, it is an homogeneous harmonic polynomial with odd homogeneity in $x_{n+1} \ge 0$ (extended evenly in the whole space). Notice also that it cannot be linear (on each side) because the minimum possible homogeneity is $\frac32$.

On the other hand, suppose that $0\in \Gamma(u)$ is such that $N(0^+, u) = 2m+1$ for some $m\in \N$. Take any blow-up of $u$ at zero, $u_0$. Then $u_0$ is a global solution to the thin obstacle problem, with homogeneity $2m+1$. Let us define the global (homogeneous) solution to the thin obstacle problem given by $P$,
\[
P(x) = \sum_{j = 1}^n {\rm Im}\big((x_j+i|x_{n+1}|\big)^{2m+1},
\]
so that $\de_{x_{n+1}}^+ P < 0$ in $\{x_{n+1} = 0\}\setminus\{0\}$. Using \eqref{eq.delta_xn}, we obtain that for any test function $\Psi = \Psi(|x|)$ (so that $\nabla\Psi = \Psi'(|x|)\frac{x}{|x|}$),
\begin{align*}
2\int_{\{x_{n+1} = 0\}} \de_{x_{n+1}}^+ P \,\Psi u_0 & = \int \Delta P \Psi u_0 = -\int \left( \nabla P \cdot \nabla u_0 \Psi+\nabla P \cdot\nabla\Psi u_0\right) \\
& =  \int \left( P \Delta u_0 \Psi+P\nabla u_0 \cdot\nabla\Psi -u_0 \nabla P \cdot\nabla\Psi \right)\\
& = \int \left( P\nabla u_0\cdot x \frac{\Psi'(|x|)}{|x|}- u_0\nabla P\cdot x \frac{\Psi'(|x|)}{|x|}\right) = 0,
\end{align*}
where we have used that $P \Delta u_0 \equiv 0$ everywhere, and $\nabla u_0 \cdot x = (2m+1) u_0$, $\nabla P\cdot x = (2m+1) P$. Since $u_0\ge 0$ on the thin space, and $\de_{x_{n+1}}^+ P < 0$ outside of the origin on the thin space, we deduce $u_0\equiv 0$ on the thin space. 

As a consequence $u_0$ must be harmonic everywhere, vanishing on the thin space. Thus, it is an homogeneous harmonic polynomial with degree $2m+1$. In particular, $\de_{x_{n+1}} u_0$ is a non-zero $2m$-homogeneous polynomial on $\R^{n+1}_+$. From the $C^1$ convergence of $u_{r_j}\to u_0$ (that is, the uniform convergence of $\de_{x_{n+1}} u_{r_j}$ to $\de_{x_{n+1}} u_0$) we deduce \eqref{eq.oddch}. 
\end{proof}

We also have a result analogous to Theorem~\ref{thm.kdiff} at odd-frequency points. Let us start by defining for $m\ge 1$
\begin{align*}
\mathcal{Q}_{2m+1} := \big\{\,& \text{$q$ solution to the thin obstacle problem \eqref{eq.TOP0} in $\R^{n+1}$},\\
& x\cdot \nabla q = (2m+1)q,\,q(x', x_{n+1}) = q(x', -x_{n+1})\big\},
\end{align*}
namely, the set of $(2m+1)$-homogeneous even solutions to the thin obstacle problem (notice that by the proof of Proposition~\ref{prop.odd}, in particular, $q(x', 0) \equiv 0$). Then, we have 
\begin{thm}[Uniqueness of blow-ups at odd-frequency points, \cite{FRS19}]
\label{thm.uniqoddfreq}
Let $u$ be a solution to \eqref{eq.TOP0}. Let $x_\circ\in \Gamma_{2m+1}(u)$ for some $m\in \N$. Then, there exists a non-zero $q_{x_\circ}\in \mathcal{Q}_{2m+1}$ such that 
\begin{equation}
\label{eq.expu_o}
u(x) = q_{x_\circ}(x-x_\circ) + o(|x-x_\circ|^{2m+1}). 
\end{equation}
In particular, the blow-up at 0 is unique. Moreover, the set $\Gamma_{2m+1}(u)$ is $(n-1)$-rectifiable.
\end{thm}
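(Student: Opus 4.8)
The plan is to adapt, essentially line by line, the argument of Garofalo and Petrosyan for singular points (the proof of Theorem~\ref{thm.kdiff}), replacing the even frequency $2m$ by the odd frequency $2m+1$ and the class $\mathcal{P}_{2m}$ by $\mathcal{Q}_{2m+1}$. After translating, assume $x_\circ = 0$, so $\lambda := N(0^+,u,0) = 2m+1$. Two monotonicity formulas carry the argument: the Weiss boundary adjusted energy $W_\lambda(r,u)$ from Section~\ref{ssec.epi}, which is nondecreasing with $\frac{d}{dr}W_\lambda(r,u) = \frac{2}{r^{n+1+2\lambda}}\int_{\de B_r}(x\cdot\nabla u - \lambda u)^2$, so that the convergence to a $\lambda$-homogeneous profile is quantified; and a Monneau-type formula $M_\lambda(r,u,q) := \frac{1}{r^{n+2\lambda}}\int_{\de B_r}(u-q)^2$ for $q\in\mathcal{Q}_{2m+1}$, whose monotonicity in $r$ is a direct computation from the Weiss monotonicity of $u$ together with the $(2m+1)$-homogeneity of $q$. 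By Corollary~\ref{cor.strongconv} and the proof of Proposition~\ref{prop.odd}, every blow-up of $u$ at $0$ (taken with the $L^2(\de B_r)$ normalization) is a nonzero $(2m+1)$-homogeneous global solution that is a harmonic polynomial vanishing identically on the thin space, hence, up to a multiplicative constant, an element of $\mathcal{Q}_{2m+1}$; the only issue preventing uniqueness is that distinct sequences $r_j\downarrow 0$ might select distinct such polynomials.

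The main obstacle is the \emph{non-degeneracy} estimate $\sup_{B_r}|u|\ge c\,r^{2m+1}$ for small $r>0$, equivalently $\liminf_{r\downarrow 0} r^{-2(2m+1)}\ave_{\de B_r}u^2 > 0$. (By Lemma~\ref{lem.Almgren2} the quantity $r^{-2\lambda}\ave_{\de B_r}u^2$ is nondecreasing, so the limit exists and only its strict positivity is in question.) This is exactly what guarantees that the fixed-scaling rescalings $u(r\,\cdot)/r^{2m+1}$ have a nonzero limit and makes the Monneau formula usable with a fixed polynomial rather than an $r$-dependent normalization. I would prove it by combining the polynomial structure of the blow-up with the density-one characterization of $\Gamma_{\rm odd}(u)$ in Proposition~\ref{prop.odd} and a Weiss/Monneau ODE argument, following \cite{GP09,FRS19}; the delicate point is ruling out a possible logarithmic loss in the rate at which $\ave_{\de B_r}u^2$ decays.

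Granted non-degeneracy, let $q_{x_\circ}$ be a (necessarily nonzero) subsequential limit of the fixed-scaling rescalings $u_{r_j}(x) := u(r_j x)/r_j^{2m+1}$, so $q_{x_\circ}\in\mathcal{Q}_{2m+1}$ and, by the $C^1_{\rm loc}$-convergence of Corollary~\ref{cor.strongconv}, $M_{2m+1}(r_j,u,q_{x_\circ}) = \ave_{\de B_1}(u_{r_j}-q_{x_\circ})^2 \to 0$. Since $r\mapsto M_{2m+1}(r,u,q_{x_\circ})$ is nondecreasing, its limit as $r\downarrow 0$ exists and equals $0$; hence $M_{2m+1}(r,u,q_{x_\circ})\to 0$ along \emph{every} sequence $r\downarrow 0$, which forces every blow-up to coincide with $q_{x_\circ}$ and gives the expansion \eqref{eq.expu_o} via interior estimates off the thin space. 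The continuity of the map $x_\circ\mapsto q_{x_\circ}$ on $\Gamma_{2m+1}(u)$ is then a routine compactness argument: if $x_k\to x_\circ$ with $x_k\in\Gamma_{2m+1}(u)$, any limit of $q_{x_k}$ is a blow-up of $u$ at $x_\circ$, hence equals $q_{x_\circ}$ by uniqueness.

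Finally, the expansion \eqref{eq.expu_o} and the continuity of $x_\circ\mapsto q_{x_\circ}$ place us in exactly the setting of the Whitney extension plus implicit function theorem argument used to prove Theorem~\ref{thm.gp09}: stratifying $\Gamma_{2m+1}(u)$ according to $\ell = \dim L(q_{x_\circ})$, one covers each stratum by countably many $\ell$-dimensional $C^1$ manifolds (countably many, rather than one, because of possible accumulation of lower-order onto higher-order points). To conclude $(n-1)$-rectifiability it remains to note that $\ell\le n-1$ always: a nonzero $(2m+1)$-homogeneous polynomial that vanishes on $\{x_{n+1}=0\}$ and is even in $x_{n+1}$ cannot be invariant under all translations in the thin space (it would then depend only on $x_{n+1}$, forcing $q = c\,x_{n+1}^{2m+1}$, which is odd, hence $q\equiv 0$), nor under all translations of any $n$-dimensional subspace. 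Therefore $\Gamma_{2m+1}(u)$ is contained in a countable union of $C^1$ manifolds of dimension at most $n-1$, and in particular it is $(n-1)$-rectifiable.
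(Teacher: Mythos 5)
There is a genuine gap, and it sits exactly at the two steps you label ``a direct computation'' and ``I would prove it by\dots''. The Monneau-type monotonicity for $q\in\mathcal{Q}_{2m+1}$ is \emph{not} a line-by-line transcription of the Garofalo--Petrosyan argument. In the even case one uses two structural facts about $p\in\mathcal{P}_{2m}$: it is harmonic in all of $\R^{n+1}$ (so the cross terms coming from $W_{2m}(r,u-p)$ cancel exactly), and it is nonnegative on the thin space (so $\int_{B_r}(u-p)\,\Delta(u-p)=-\int_{B_r}p\,\Delta u\ge 0$). A nonzero $q\in\mathcal{Q}_{2m+1}$ has neither property in the form needed: it vanishes on the thin space, but by \eqref{eq.delta_xn} it is \emph{not} harmonic across it, since $\Delta q=2\de^+_{x_{n+1}}q\,\mathcal{H}^n\mres\left(\{x_{n+1}=0\}\right)$ is a nontrivial nonpositive measure. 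Carrying out the computation with $w=u-q$ and $\lambda=2m+1$ (using $u\Delta u=q\Delta q=0$, $q=0$ on ${\rm supp}\,\Delta u$, and $\de_\nu q=\frac{\lambda}{r}q$ on $\de B_r$ by homogeneity) one finds
\[
\frac{d}{dr}M_\lambda(r,u,q)=\frac{2}{r}\,W_\lambda(r,u)+\frac{2}{r^{n+2\lambda}}\int_{B_r}u\,\Delta q,
\]
and the last term has the \emph{wrong} sign: it is $\le 0$ because $u\ge 0$ and $\de^+_{x_{n+1}}q\le 0$ on the thin space. So monotonicity of $M_\lambda$ does not follow from the Weiss monotonicity of $u$ and the homogeneity of $q$; one must control the error $r^{-n-2\lambda}\int_{B_r}u\,\Delta q$, and the natural input (the density-one property of $\Lambda(u)$ from Proposition~\ref{prop.odd}) only gives a qualitative $o(1)/r$ bound --- precisely the logarithmic loss you yourself flag. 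Making this quantitative is where the real work of \cite{FRS19} lies.

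The same issue undermines your non-degeneracy step: you correctly identify it as the main obstacle, but the argument you sketch for it (``a Weiss/Monneau ODE argument'') relies on the very monotonicity that is missing, so as written it is a plan, not a proof. Note also that the survey does not prove Theorem~\ref{thm.uniqoddfreq}: it quotes \cite{FRS19}, and Remark~\ref{rem.typesofblowups2} states explicitly that the new ingredient one must establish at odd-frequency points is this non-degeneracy. The remaining parts of your argument --- deducing uniqueness of the blow-up and the expansion \eqref{eq.expu_o} once non-degeneracy and an (almost-)Monneau inequality with summable error are available, continuity of $x\mapsto q_x$, and the Whitney-type covering giving $(n-1)$-rectifiability, with $\dim L(q)\le n-1$ since a nonzero $q\in\mathcal{Q}_{2m+1}$ cannot depend on $x_{n+1}$ alone (the function $c\,|x_{n+1}|^{2m+1}$ is not harmonic in $\{x_{n+1}>0\}$ for $m\ge 1$, rather than because of oddness) --- follow the scheme of \cite{GP09} and are fine, but they are conditional on the two ingredients that are asserted rather than proved.
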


The $(n-1)$-rectifiability of the set $\Gamma_{2m+1}(u)$ had been already proved in \cite{KW13, FS18}, see Theorem~\ref{thm.fs18} below.

\begin{rem}
\label{rem.typesofblowups2}
In order to establish this result, one needs to show first a non-degeneracy property around points belonging to $\Gamma_{2m+1}(u)$ analogous to the one in the case of singular points. As a consequence, one can then define 
\[q_{x_\circ}(x) := \lim_{r\downarrow 0} \frac{u(x_\circ + rx)}{r^{2m+1}},
\]
which is a non-zero element of $\mathcal{Q}_{2m+1}$. As in Remark~\ref{rem.typesofblowups}, $q_{x_\circ}$ is a (non-zero) multiple of the blow-up obtained by the sequence \eqref{eq.blowupseq}, and one can then define $q_x$ to be the \emph{blow-up} or \emph{first blow-up} at points $x\in \Gamma_{2m+1}(u)$.
\end{rem}

\subsection{The set $\Gamma_{\rm half}(u)$} The free boundary points belonging to $\Gamma_{\rm half}(u)$ are those with homogeneity $2m+\frac32$ for $m\in \N$.

They do exist: the homogeneous solutions are themselves examples of solutions to the thin obstacle problem with free boundary points belonging to $\Gamma_{\rm half}(u)$. Whereas they are currently not completely understood, they seem to exhibit a similar behaviour to regular points. However, the fact that they are not an open set (in the free boundary), makes it harder to study regularity properties of the free boundary around them.

There are not many results for points belonging to $\Gamma_{\rm half}$. The following proposition shows that points in $\Gamma_{\rm half}(u)$ can present a behaviour similar to that of regular points, but the converse is not true: we still do not know whether the set $\Gamma_{\rm half}$ is regular. 

\begin{prop}[\cite{FR19}]
Given a $C^\infty$ domain $\Omega\subset B_1\cap\{x_{n+1} = 0\}$, and $m\in \N$, there exists $\varphi\in C^\infty$, and $g\in C^0(\de B_1)$, such that the solution $u $ to the thin obstacle problem \eqref{eq.thinobst_intro_3} with obstacle $\varphi$ and boundary data $g$ has contact set  $\Lambda(u) = \Omega$, and all the points of the free boundary $\Gamma(u)$ have frequency $2m+\frac32$. 
\end{prop}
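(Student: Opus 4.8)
The plan is to construct the solution by prescribing the desired contact set $\Lambda(u)=\overline{\Omega}$ and building the obstacle and boundary data around it. The key point is that we have a lot of freedom: we only need \emph{some} $\varphi$ and $g$ for which the contact set is exactly $\overline{\Omega}$ and all free boundary points have the given half-integer frequency $2m+\tfrac32$. A natural candidate is to start from a model homogeneous solution of frequency $2m+\tfrac32$, such as the one-dimensional profile $P_m(x)={\rm Re}\bigl((x_1+i|x_{n+1}|)^{2m+3/2}\bigr)$ (or rather, a suitable function behaving like $\mathrm{dist}(x',\Omega)^{2m+3/2}$ near $\partial\Omega$, vanishing on $\Omega$, and positive outside). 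The idea is: \emph{prescribe} $u$ on the thin space to be a smooth nonnegative function $\bar u(x')$ with $\{\bar u=0\}=\overline{\Omega}$, which near $\partial\Omega$ has the asymptotics $\bar u(x')\sim c\,\mathrm{dist}(x',\Omega)^{2m+3/2}$ with $c>0$, and such that its (even) harmonic extension $u$ to $B_1^+$ satisfies $-\partial_{x_{n+1}}u\ge 0$ on the thin space (i.e. $u$ is a genuine supersolution across $\{x_{n+1}=0\}$). Then set $\varphi:=\bar u|_{B_1'}$, which forces $u-\varphi\equiv 0$ on $\Omega$ and $u-\varphi=\bar u>0$ off $\Omega$, and $g:=u|_{\partial B_1}$. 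With these choices $u$ solves \eqref{eq.thinobst_intro_3} with contact set exactly $\overline{\Omega}$.

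The heart of the construction is therefore an extension/regularity lemma: given the smooth domain $\Omega$, produce a function $\bar u\in C^\infty(B_1'\setminus\partial\Omega)$, nonnegative, vanishing precisely on $\overline{\Omega}$, with the prescribed $\mathrm{dist}^{2m+3/2}$ behaviour transverse to $\partial\Omega$, and whose harmonic extension $u$ to $B_1^+$ is a supersolution across the thin space and whose obstacle $\varphi=\bar u|_{B_1'}$ lies in $C^\infty$. The way I would do this: choose a smooth defining function $d$ for $\Omega$ (positive outside, negative inside, $|\nabla d|=1$ near $\partial\Omega$), and take $\bar u = (d_+)^{2m+3/2}\eta + \psi$ where $\eta$ is a cutoff supported near $\partial\Omega$ and $\psi$ is a smooth nonnegative bump, vanishing on a neighbourhood of $\overline\Omega$, used to kill any unwanted zeros away from $\partial\Omega$ and to keep $\bar u$ strictly positive on the rest of $B_1'$. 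One checks $\varphi=\bar u|_{B_1'}\in C^\infty(B_1')$ since $2m+\tfrac32>2$ (indeed $\ge \tfrac72$), so $(d_+)^{2m+3/2}$ is at least $C^{3}$ up to (in fact, with $m\ge1$, much smoother than) $\partial\Omega$, matching the hypothesis $\varphi\in C^\infty$ — here one uses that the exponent is $>k$ for every $k$ only if $m$ is large, so more carefully $\varphi\in C^{\lfloor 2m+3/2\rfloor,\,1/2}$, which suffices for the thin obstacle theory; if literal $C^\infty$ of the obstacle is required one instead mollifies $\varphi$ away from $\partial\Omega$ and adjusts. The supersolution condition $-\partial_{x_{n+1}}u\ge0$ on $B_1'$ is the delicate analytic input: one shows that for a nonnegative $\bar u$ with this precise vanishing rate, the Dirichlet-to-Neumann (half-Laplacian) value $(-\Delta)^{1/2}\bar u=-\partial_{x_{n+1}}^+u$ is nonnegative on the thin space. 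This follows because $(-\Delta)^{1/2}$ of $(d_+)^{2m+3/2}$ is, to leading order near $\partial\Omega$, a positive multiple of $(d_+)^{2m+1/2}$ (the half-Laplacian lowers homogeneity by $1$ and, acting on the one-dimensional profile $t\mapsto (t_+)^{\alpha}$ with $\alpha=2m+3/2$ noninteger, produces a constant of a definite sign times $(t_+)^{\alpha-1}$), and the lower-order and tangential corrections are controlled; away from $\partial\Omega$, $\bar u$ is smooth and one arranges $(-\Delta)^{1/2}\bar u\ge0$ by choosing the bump $\psi$ appropriately (or by noting the interior of $\Omega$ contributes a nonnegative pull).

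Once $u$ is in hand as above, it is by construction the solution to \eqref{eq.thinobst_intro_3} with this $\varphi$ and $g$ (uniqueness of the minimizer identifies it), and $\Lambda(u)=\overline\Omega$. It remains to verify the frequency claim: at any $x_\circ\in\Gamma(u)=\partial\Omega\times\{0\}$, blow up via \eqref{eq.blowupseq}. By the $\mathrm{dist}^{2m+3/2}$ asymptotics and smoothness of $\partial\Omega$, the rescalings converge in $C^1_{\rm loc}$ to the one-dimensional half-space profile ${\rm Re}\bigl((x\cdot\nu + i|x_{n+1}|)^{2m+3/2}\bigr)$ in the tangent configuration, which is $(2m+\tfrac32)$-homogeneous; hence $N(0^+,u,x_\circ)=2m+\tfrac32$ by Lemma~\ref{lem.Almgren} and the homogeneity of the blow-up. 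The main obstacle is the middle step: proving that the explicitly prescribed $\bar u$ (a nonnegative function vanishing to order $2m+\tfrac32$ on a smooth hypersurface) really is a supersolution across the thin space, i.e. that its half-Laplacian is nonnegative on all of $B_1'$. This is where one must combine the exact computation of $(-\Delta)^{1/2}(t_+)^{\alpha}$, a localization/perturbation argument near $\partial\Omega$ exploiting $C^\infty$-smoothness of the hypersurface, and a nonnegative sign at the global (non-local) scale — the last handled by the auxiliary bump $\psi$ and a maximum-principle/Hopf-type argument. Everything else (choice of $\varphi$, $g$, identification of $u$ with the minimizer, the blow-up computation) is routine given the theory recalled earlier in the survey.
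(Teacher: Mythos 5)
Your construction has a structural gap that I do not think can be repaired in the form proposed. As written it is even internally inconsistent: you prescribe $u=\bar u$ on the thin space and then set $\varphi:=\bar u|_{B_1'}$, which makes $u-\varphi\equiv 0$ on all of $B_1'$, so the contact set would be the whole thin ball rather than $\overline\Omega$. Reading it charitably (say $\varphi\equiv 0$, or $\varphi$ equal to $\bar u$ only on $\overline\Omega$), the essential problem is that you impose only the supersolution inequality $-\partial_{x_{n+1}}u\ge 0$ and never the complementarity condition $\partial_{x_{n+1}}u=0$ on $\{u>\varphi\}$, which is part of the system \eqref{eq.thinobst_intro}; a supersolution lying above the obstacle is not the solution (the solution is the \emph{least} supersolution). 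For your explicit trace $\bar u=(d_+)^{2m+3/2}\eta+\psi$ this condition fails: by your own leading-order computation the Neumann data of the extension behaves like a nonzero multiple of $d^{2m+1/2}$ on the side where $\bar u>0$, i.e.\ precisely where it must vanish. Worse, the sign condition you do discuss is false for your construction: if $u$ is the harmonic extension in $B_1^+$ of the nonnegative data $\bar u$ (with nonnegative spherical data), then $u\ge 0$ in $B_1^+$ and $u=0$ on $\Omega\times\{0\}$, so Hopf's lemma gives $\partial^+_{x_{n+1}}u>0$, i.e.\ $-\partial_{x_{n+1}}u<0$, on all of $\Omega$ --- the wrong sign on the contact set. (The model solutions of frequency $2m+\frac32$ avoid this only because they are \emph{not} the nonnegative extension of their trace: they go negative above the contact set; but that extension cannot be prescribed through its trace, since off $\overline\Omega$ the trace is determined by the mixed Dirichlet--Neumann problem and is not free data.) Adding the bump $\psi$ only makes the sign on $\Omega$ worse, since it contributes negatively to $(-\Delta)^{1/2}\bar u$ there; and an obstacle inheriting the $d^{2m+3/2}$ behaviour is not $C^\infty$, so the mollification aside does not restore the hypothesis of the statement.

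The missing idea is that one cannot prescribe the solution on the thin space by hand: one must actually solve the complementarity problem and then control the boundary behaviour of that solution. This is exactly where the proof in \cite{FR19} (the survey only points to it) uses Grubb's results \cite{Gru15}: the $\mu$-transmission theory for the Dirichlet problem for the half-Laplacian produces a function which genuinely satisfies the required equations off and on $\overline\Omega$, vanishes on $\overline\Omega$, and admits an exact expansion $c\,d^{2m+3/2}+\dots$ at $\partial\Omega$ with smooth, positive coefficient; the $C^\infty$ obstacle and the boundary datum $g$ are then read off from this function (the smoothness of $\varphi$ coming from Grubb-type regularity up to $\partial\Omega$, the singular part being carried by $u-\varphi$). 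Your final blow-up/frequency verification is fine in spirit, but it rests on the middle step --- the verification of the Signorini conditions for an explicitly prescribed trace --- which is precisely the step that fails.
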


The proof of this proposition is an explicit construction based on a previous result by Grubb, \cite{Gru15}. 

On the other hand, epiperimetric inequalities (see subsection~\ref{ssec.epi}) have been used to get some regularity properties for this set for $n = 1$: in \cite{CSV19} the authors prove a classical epiperimetric inequality at points in $\Gamma_{\rm half}(u)$ in dimension $n = 1$, which gives $C^{1,\alpha}$ decay of the solution and uniqueness of blow-ups in this case. 

Finally, the only general result that establishes some regularity for the whole of the free boundary in the thin space (and, in particular, for $\Gamma_{\rm half}(u)$) is the following recent result by Focardi and Spadaro \cite{FS18} (based on the general approach introduced by Naber and Valtorta in \cite{NV17, NV15}), which shows the $\mathcal{H}^{n-1}$-rectifiability of the free boundary for the thin obstacle problem. As a consequence, the set $\Gamma_{\rm half}(u)$ is always contained in the countable union of $C^1$ manifolds, up to a set of zero $(n-1)$-dimensional Hausdorff measure. The same result had also been proved by Krummel and Wickramasekera in \cite{KW13} in the context of two-valued harmonic functions\footnote{After a transformation, the thin obstacle problem is a particular case of two-valued harmonic function. Therefore, rectifiability of the free boundary for the thin obstacle problem can also be deduced from \cite{KW13}}. 

\begin{thm}[\cite{KW13, FS18}]
\label{thm.fs18}
Let $u$ be a solution to \eqref{eq.TOP0}. Then the free boundary $\Gamma(u)$ is $(n-1)$-rectifiable. That is, there exists at most countably many $C^1$ $(n-1)$-dimensional manifolds $M_i$ such that 
\[
\mathcal{H}^{n-1}\left(\Gamma(u) \setminus \bigcup_{i\in \N} M_i\right) = 0,
\]
where $\mathcal{H}^{n-1}$ denotes the $(n-1)$-dimensional Hausdorff measure. 
\end{thm}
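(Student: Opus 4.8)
The plan is to run the Naber--Valtorta quantitative-stratification scheme, with Almgren's frequency function $N(r,u,x)$ playing the role that the normalized Dirichlet energy plays for harmonic maps. Work in $B_{1/2}$ and fix a solution $u$. The first, soft, step is to extract from the monotonicity of $N$ (Lemma~\ref{lem.Almgren}) and from the compactness of blow-up sequences (Corollary~\ref{cor.strongconv}) the following: (a) $x\mapsto N(0^+,u,x)$ is bounded and upper semicontinuous on $\Gamma(u)\cap B_{1/2}$, with $N(0^+,u,x)\ge\tfrac32$; (b) every blow-up of $u$ at a free boundary point is a nonzero homogeneous global solution of \eqref{eq.TOP0}; and (c) an \emph{almost}-rigidity statement upgrading ``$N$ constant $\iff$ homogeneous'': for every $\eta>0$ there is $\delta>0$ such that if $N(r,u,x)-N(\theta r,u,x)\le\delta$ for some $x\in\Gamma(u)$ and a fixed $\theta\in(0,1)$, then the rescaling $u_{r,x}$ is within $\eta$ in $L^2(B_1)$ of a homogeneous solution. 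Item (c) is proved by contradiction and compactness, using \eqref{eq.compwith} to read $N$ off the logarithmic derivative of $\psi(r)=\ave_{\partial B_r}u^2$ (Lemma~\ref{lem.Almgren2}) and the strong $W^{1,2}_{\rm loc}$/$C^1_{\rm loc}$ convergence from Corollary~\ref{cor.strongconv}.

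Next I would set up the quantitative stratification. Say that $u$ is $(k,\varepsilon)$-symmetric at scale $r$ around $x$ if $u_{r,x}$ lies within $\varepsilon$ in $L^2(B_1)$ of some homogeneous solution invariant under translations along a $k$-dimensional subspace of $\{x_{n+1}=0\}$, and let $S^{k}_{\varepsilon}(u)$ be the set of points around which $u$ fails to be $(k+1,\varepsilon)$-symmetric at every scale $r<1$. The structural input specific to the thin obstacle problem is that \emph{no blow-up at a free boundary point is invariant under all $n$ tangential translations}: such a blow-up would be a $\lambda$-homogeneous function of $x_{n+1}$ alone, i.e.\ $c\,|x_{n+1}|^\lambda$, which solves \eqref{eq.TOP0} only when $\lambda\in\{0,1\}$, whereas blow-ups at free boundary points have homogeneity $\ge\tfrac32$ by Theorem~\ref{thm.blowupthm}. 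By compactness of the (universal) family of normalized blow-ups this gap is quantitative, so there is $\varepsilon_\circ=\varepsilon_\circ(n)>0$ with $\Gamma(u)\cap B_{1/2}\subset S^{n-1}_{\varepsilon_\circ}(u)$. It therefore suffices, for each fixed $\varepsilon>0$, to prove that $S^{n-1}_{\varepsilon}(u)$ is $(n-1)$-rectifiable and in fact has locally finite $\mathcal{H}^{n-1}$ measure.

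The core of the argument is a \emph{cone-splitting}/quantitative-rigidity lemma combined with the Naber--Valtorta covering machinery. Cone-splitting: if $u_{r,x}$ is $(0,\varepsilon)$-symmetric and there are points $y_0,\dots,y_{k}\in B_{r/2}(x)$, pairwise $\rho r$-separated and spanning a $k$-plane $V$, around each of which $u$ is $(0,\varepsilon)$-symmetric at scale $r$, then $u_{r,x}$ is $(k,\delta)$-symmetric with $\delta=\delta(n,\varepsilon,\rho)\to0$ as $\varepsilon\to0$; again this is proved by contradiction, since a limit must be a homogeneous solution that is simultaneously homogeneous about each $y_i$, hence invariant under translations along $V$. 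Monotonicity of $N$ supplies the crucial summability bound along dyadic scales: for $x\in\Gamma(u)$, $\sum_{j\ge0}\bigl(N(2^{-j}r,u,x)-N(2^{-j-1}r,u,x)\bigr)=N(r,u,x)-N(0^+,u,x)$ is uniformly bounded, so only a controlled number of scales have frequency drop $\ge\delta$. Feeding cone-splitting and this bound into the discrete Reifenberg theorem of Naber--Valtorta, one constructs scale by scale efficient covers of $S^{n-1}_{\varepsilon,r}(u)$ by balls whose centers lie uniformly close to $(n-1)$-planes, sums the resulting packing estimates geometrically, and obtains $\mathcal{H}^{n-1}\bigl(S^{n-1}_{\varepsilon}(u)\cap B_{1/2}\bigr)<\infty$ together with $(n-1)$-rectifiability; with the previous paragraph this proves the theorem. (Here $\Gamma_{3/2}(u)$ and the singular strata $\Gamma_{2m}^\ell(u)$ are already known to be smooth, respectively $C^1$, from Theorems~\ref{thm.C1a} and~\ref{thm.gp09}, while the present argument also covers $\Gamma_{\rm half}(u)$, $\Gamma_{\rm odd}(u)$ and $\Gamma_*(u)$ uniformly.)

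The main obstacle I anticipate is making the quantitative rigidity genuinely effective and uniform: turning ``frequency almost constant over a definite window of scales'' into ``$L^2$-close to a homogeneous solution'', and ``almost-symmetric about well-separated points'' into ``almost-translation-invariant'', with constants independent of $u$ and of the base point, and with the unilateral thin-obstacle constraint shown stable under the relevant blow-up limits. A further technical point is checking that the hypotheses of the discrete Reifenberg and packing theorems of Naber--Valtorta are met in this free-boundary setting, where the natural monotone quantity is $\tfrac{r}{2}\tfrac{d}{dr}\log\psi(r)$ rather than a literal energy density. An alternative route, which I would at least mention, passes to the two-valued harmonic function obtained by odd reflection of $u$ across $\{x_{n+1}=0\}$, whose branch set contains $\Gamma(u)$, and invokes the $\mathcal{H}^{n-1}$-rectifiability of branch sets of two-valued harmonic functions of Krummel--Wickramasekera.
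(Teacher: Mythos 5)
The paper does not actually prove this theorem: being a survey, it states the result and attributes it to Focardi--Spadaro \cite{FS18} (via the Naber--Valtorta machinery \cite{NV17, NV15}) and to Krummel--Wickramasekera \cite{KW13}. Your outline is precisely that cited strategy --- quantitative stratification driven by Almgren's frequency (upper semicontinuity, compactness of blow-ups, quantitative rigidity and cone-splitting, summability of frequency drops across dyadic scales, then the discrete/rectifiable Reifenberg covering), together with the observation that no blow-up at a free boundary point can be invariant in all $n$ tangential directions, and you even record the alternative route through two-valued harmonic functions of \cite{KW13} --- so it matches the approach behind the theorem as the paper presents it. The one caveat is that the decisive quantitative ingredient of \cite{FS18}, namely the $L^2$ best-approximation ($\beta$-number) estimate controlled by the frequency pinching, which is what actually feeds the Naber--Valtorta covering and packing theorems, is invoked rather than established in your sketch; at the level of detail the survey itself adopts this is acceptable, but it is where the real work of the cited proof lies.
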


\subsection{The set $\Gamma_*(u)$}

We call $\Gamma_*(u)$ the rest of free boundary points. That is, points with homogeneity not belonging to the set $\{2m , 2m+1, 2m-\frac12\}_{m\in \N}$,
\begin{equation}
\label{eq.gammastar}
\Gamma_*(u) := \left\{x_\circ\in \Gamma(u) : N(0^+, u, x_\circ) \in (2, \infty)\setminus \bigcup_{m\in \N} \left\{2m, 2m+1, 2m-\frac12\right\}\right\}.
\end{equation}

It is currently not known whether such points exist, that is, which of the frequencies from the set $\left\{\frac32\right\}\cup [2, \infty)$ are admissible; although it is conjectured that the set $\Gamma_*(u)$ is empty, at least in low dimensions. 

The only result in this direction comes from the study of the epiperimetric inequality by Colombo, Spolaor, and Velichkov, introduced in subsection~\ref{ssec.epi}. In particular, they are able to show, by means of their logarithmic-type epiperimetric inequality at singular points, that even frequencies are isolated in the set of admissible frequencies: points with order \emph{close} to $2m$ do not exist (except for singular points themselves), where this closeness can be quantified with an explicit constant.

 \begin{thm}[\cite{CSV19}]
 \label{thm.epiCSV}
Let $u$ be a solution to the thin obstacle problem with zero obstacle,
 \begin{equation}
\label{eq.top_b}
  \left\{ \begin{array}{rcll}
  u & \ge & 0 & \textrm{ on }B_1 \cap \{x_{n+1} = 0\}\\
  \Delta u&=&0 & \textrm{ in } B_1\setminus\left(\{x_{n+1} = 0\}\cap \{u = 0\}\right)\\
  \Delta u & \le & 0 & \textrm{ in } B_1\\
    u& = & g & \textrm{ on } \de B_1,
  \end{array}\right.
\end{equation}
Let $\Gamma_\lambda(u)$ denote the points of frequency $\lambda > 0$. Then, 
\[
\Gamma_\lambda(u) = \varnothing \qquad\text{for every}\quad \lambda \in \bigcup_{m\in \N} \big((2m-c_m, 2m+c_m)\setminus\{2m\}\big),
\]
for some constants $c_m>0$ depending only on $m$ and $n$. 
 \end{thm}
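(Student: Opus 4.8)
The plan is to argue by contradiction and compactness, using the logarithmic-type epiperimetric inequality at even-frequency points together with the monotonicity of the Weiss boundary-adjusted energy $W_\lambda(r,u)$. Suppose the conclusion fails for some fixed $m\in\N$: then there exist a sequence of solutions $u_k$ to \eqref{eq.top_b}, points $x_k\in\Gamma_{\lambda_k}(u_k)$ with $\lambda_k\to 2m$ but $\lambda_k\neq 2m$, which (after translating so that $x_k=0$ and rescaling so that $\|u_k\|_{L^2(\partial B_1)}=1$) we may blow up. First I would recall that, since $\lambda_k\to 2m$, the frequencies are uniformly bounded, so by Almgren's monotonicity (Lemma~\ref{lem.Almgren}) the $u_k$ are uniformly bounded in $W^{1,2}_{\rm loc}$, and a subsequence converges strongly in $L^2_{\rm loc}$ and in $C^1_{\rm loc}(\overline{\R^{n+1}_+})$ to a global solution $u_\infty$ which is $2m$-homogeneous (the frequency passes to the limit). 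By the characterization of singular points, $u_\infty=p\in\mathcal P_{2m}$, a nonzero $2m$-homogeneous harmonic polynomial nonnegative on the thin space; in particular $0\in\Gamma_{2m}(u_\infty)$.

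The next step is to quantify the closeness of each $u_k$ to the model $2m$-homogeneous profiles using the epiperimetric inequality. For a solution $u$ with $0\in\Gamma(u)$ and with blow-up of homogeneity near $2m$, one forms at each scale $r$ the $2m$-homogeneous extension $z_r$ of the trace $(u_r)|_{\partial B_1}$ (where $u_r(x)=u(rx)/r^{2m}$, say), applies the logarithmic-type epiperimetric inequality of \cite{CSV19} to produce a competitor, and compares energies; combined with the Weiss monotonicity formula $\frac{d}{dr}W_{2m}(r,u)=\frac{2}{r^{n+1+4m}}\int_{\partial B_r}(x\cdot\nabla u-2m\,u)^2\ge 0$ and the minimality of $u$, this yields a differential inequality for $W_{2m}(r,u)$ forcing it to decay to its limiting value at a controlled (logarithmic) rate — in particular $W_{2m}(0^+,u)$ is attained and the solution is genuinely asymptotic to a $2m$-homogeneous polynomial. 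The key point is that this forces the \emph{frequency} $N(0^+,u)$ to equal exactly $2m$: if $N(0^+,u)=\lambda\neq 2m$ then $W_{2m}(r,u)$ would either blow up (if $\lambda<2m$) or vanish (if $\lambda>2m$) as $r\downarrow 0$, and the epiperimetric-driven decay estimate is incompatible with the rate of such behavior unless $W_{2m}(0^+,u)$ is finite and positive, i.e. $\lambda=2m$. Thus there is an explicit gap: there is $c_m>0$ so that no solution can have $0\in\Gamma_\lambda(u)$ with $0<|\lambda-2m|<c_m$.

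Concretely, I would make the last implication effective: the epiperimetric inequality $W_{2m}(1,h)\le W_{2m}(1,z)(1-\kappa_m|W_{2m}(z)|^\gamma)$, with $\gamma=\frac{n-1}{n+1}$, together with the decomposition of the trace of $u_r$ on $\partial B_1$ into spherical harmonics, shows that the spherical-harmonic components of degree different from $2m$ decay faster in $r$ than the degree-$2m$ component by a quantitative power of $r$ whose exponent is bounded below uniformly in terms of $m$ and $n$ alone (this is exactly where the gap constant $c_m$ comes from). If a free boundary point of frequency $\lambda\in(2m-c_m,2m+c_m)\setminus\{2m\}$ existed, its $\lambda$-homogeneous blow-up would be a global solution whose leading-order Fourier mode has degree $\lambda\notin\{2m\}$, contradicting the fact that all non-$2m$ modes are strictly suppressed at rate governed by $c_m$.

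The main obstacle will be making the passage from the epiperimetric inequality to the frequency gap fully rigorous and uniform: one must (i) control the error terms when replacing $u_r$ by its $2m$-homogeneous extension $z_r$ (these involve $W_{2m}(r,u)$ itself and close up only because of the monotonicity formula), (ii) iterate the epiperimetric inequality over dyadic scales to get genuine decay of $W_{2m}(r,u)\to W_{2m}(0^+,u)$ with a modulus depending only on $m,n$, and (iii) argue that this decay is incompatible with $N(0^+,u)\neq 2m$ \emph{with a quantitative lower bound} on $|N(0^+,u)-2m|$. Step (iii) is the heart of the matter: it is essentially the statement that the set of admissible frequencies is closed and that $2m$ is isolated in it from the right and from the left, and the quantitative constant $c_m$ must be tracked through the exponent $\gamma=\frac{n-1}{n+1}$ and the dimensional constant $\kappa_m$ in the epiperimetric inequality. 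Everything else — the compactness, the classification of $2m$-homogeneous solutions as polynomials, the spherical-harmonic decomposition on $\partial B_1$ — is standard and available from the results recalled above.
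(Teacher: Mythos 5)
You should note at the outset that the survey does not prove Theorem~\ref{thm.epiCSV}: it is quoted from \cite{CSV19}, so the comparison is with the route the survey indicates, namely the logarithmic epiperimetric inequality for $W_{2m}$. You identify the right tools (that inequality, Weiss monotonicity, minimality), but the mechanism you propose for extracting the gap does not work. Your central claim --- that the epiperimetric-driven decay of $W_{2m}(r,u)$ is incompatible with $N(0^+,u)=\lambda\neq 2m$ ``unless $W_{2m}(0^+,u)$ is finite and positive, i.e.\ $\lambda=2m$'' --- is false. Since $W_{2m}(r,u)=\bigl(N(r,u)-2m\bigr)\,r^{-n-4m}\int_{\partial B_r}u^2$, one has $W_{2m}(0^+,u)=0$ both when $\lambda=2m$ (homogeneous blow-ups have vanishing Weiss energy) and when $\lambda>2m$, so the limiting value cannot distinguish the two cases. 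Moreover, iterating the epiperimetric inequality only gives an \emph{upper} bound (logarithmic decay) on $W_{2m}(r,u)$ in the regime where it is nonnegative; for $\lambda>2m$ the true decay $W_{2m}(r,u)\sim r^{2(\lambda-2m)}$ is faster than logarithmic and hence perfectly consistent with it, while for $\lambda<2m$ one has $W_{2m}(1,z_r)<0$ and the inequality as you quote it, $W_{2m}(1,h)\le W_{2m}(1,z)\bigl(1-\kappa_m|W_{2m}(1,z)|^\gamma\bigr)=W_{2m}(1,z)+\kappa_m|W_{2m}(1,z)|^{1+\gamma}$, is \emph{weaker} than the trivial bound $W_{2m}(r,u)\le W_{2m}(1,z_r)$ coming from minimality, so no useful differential inequality and no contradiction can be produced. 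Thus the multi-scale route cannot detect the gap on either side. Your ``suppression of non-$2m$ spherical modes'' step is circular: that suppression is a consequence of the point having frequency exactly $2m$, whereas the blow-up at a point of frequency $\lambda\ne 2m$ need not contain any degree-$2m$ mode at all. Finally, the opening compactness argument produces a $2m$-homogeneous polynomial as a limit but no contradiction, and it is never actually used.

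The way the gap actually follows from the inequality (and this is, in essence, the argument of \cite{CSV19}) is a \emph{one-scale} comparison applied directly to the $\lambda$-homogeneous blow-up $q$, normalized so that $H:=\int_{\partial B_1}q^2=1$. Since $q$ is a homogeneous solution, $W_{2m}(1,q)=(\lambda-2m)H$; its trace $c=q|_{\partial B_1}$ satisfies $\int_{\partial B_1}|\nabla_\theta c|^2=\lambda(n+\lambda-1)H$, so the $2m$-homogeneous extension $z$ of $c$ has $W_{2m}(1,z)=\frac{(\lambda-2m)(n+\lambda+2m-1)}{n+4m-1}H$. The epiperimetric competitor $h$ is admissible and $q$ minimizes with its own boundary data, hence $W_{2m}(1,q)\le W_{2m}(1,h)\le W_{2m}(1,z)\bigl(1-\kappa_m|W_{2m}(1,z)|^\gamma\bigr)$. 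For $\lambda>2m$, dividing by $(\lambda-2m)H>0$ gives $1\le\bigl(1+\frac{\lambda-2m}{n+4m-1}\bigr)\bigl(1-\kappa_m|W_{2m}(1,z)|^\gamma\bigr)$, hence $\kappa_m(\lambda-2m)^\gamma\le\frac{\lambda-2m}{n+4m-1}$, which is impossible for $0<\lambda-2m$ small precisely because $\gamma<1$; this algebra, not a decay rate, is where $c_m$ comes from. On the side $\lambda<2m$ the same scheme is used, but it requires the variant of the epiperimetric inequality proved in \cite{CSV19} for negative energies (as noted, the version quoted in the survey gives nothing there), together with a verification of its hypotheses for the blow-up --- this is the step your sketch omits entirely. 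So: correct toolbox, but the key step converting the epiperimetric inequality into a frequency gap is missing, and the two sides of $2m$ require different forms of the inequality.
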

 
Understanding whether epiperimetric inequalities exist at other frequencies, and in particular, establishing quantitative epiperimetric inequalities are other frequencies, can help in determining whether they are admissible or not.

The goal of the rest of the subsection is to prove that, if the set $\Gamma_*(u)$ exists, then it is lower dimensional. That is, we will show the following proposition, stating that points of frequency $\kappa\in(2, \infty)\setminus\{2m, 2m+1, 2m+\frac32\}_{m\in \N}$ are $n-2$ dimensional for all solutions to the thin obstacle problem, which corresponds to \cite[Theorem 1.3]{FS18}. We provide, however, a different proof, by means of a dimension reduction argument due to White, \cite{Whi97}. 

\begin{prop}
\label{prop.smallother}
Let $u$ be a solution to the thin obstacle problem with zero obstacle, \eqref{eq.top_b}.
Let us define  $\Gamma_*(u)\subset \Gamma(u)$ by \eqref{eq.gammastar}. 
Then
\[
\dim_{\mathcal{H}}{\Gamma}_*(u) \le n-2.
\]
Moreover, if $n = 2$, ${\Gamma}_*(u) $ is discrete.
\end{prop}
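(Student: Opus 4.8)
The plan is to use Federer's dimension-reduction principle, in the form given by White in \cite{Whi97}, applied to the class of homogeneous solutions to the thin obstacle problem. The key observation is that $\Gamma_*(u)$ is defined purely in terms of the frequency function $N(0^+, u, \cdot)$, and that $N$ is a scale-invariant, upper-semicontinuous quantity along blow-up sequences (by Lemma~\ref{lem.Almgren} and Corollary~\ref{cor.strongconv}). One sets up the \emph{set of admissible frequencies}
\[
\mathcal{A} := \{N(0^+, u, x_\circ) : u \text{ solves } \eqref{eq.top_b},\ x_\circ \in \Gamma(u)\} \subset \{\tfrac32\}\cup[2,\infty),
\]
and records that by Theorem~\ref{thm.blowupthm}, all frequencies in $(1,2)$ reduce to $\tfrac32$, and that the "good" frequencies $\{2m, 2m+1, 2m+\tfrac32\}_{m\in\N}$ are precisely those we discard when forming $\Gamma_*(u)$.

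\medskip
\textbf{Main steps.} First I would verify the hypotheses of White's dimension-reduction theorem for the stratification by frequency: the solutions to \eqref{eq.top_b} form a class closed under rescaling $u \mapsto u_{r,x_\circ}$ (suitably normalized) and under translation along $\{x_{n+1}=0\}$, blow-ups exist and are homogeneous global solutions (Corollary~\ref{cor.strongconv}), and the frequency is continuous under the $C^1_{\mathrm{loc}}$ convergence of Corollary~\ref{cor.strongconv} and upper-semicontinuous in the base point. Second — and this is the crucial structural input — I would show that if a homogeneous global solution $u_0$ has a translation-invariant direction $e \in \{x_{n+1}=0\}$, i.e. $\partial_e u_0 \equiv 0$, then $u_0$ is a homogeneous global solution in one fewer variable, with the same frequency; conversely, the set $\{x_\circ \in \Gamma(u_0) : N(0^+,u_0,x_\circ) = N(0^+,u_0,0)\}$ is contained in the linear subspace $L(u_0)$ of translation-invariances of $u_0$. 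Third, I would invoke White's theorem: it yields that $\dim_{\mathcal H}\{x \in \Gamma(u) : N(0^+,u,x) = \kappa\} \le d(\kappa)$, where $d(\kappa)$ is the maximal dimension of $L(u_0)$ over all homogeneous global solutions $u_0$ of frequency $\kappa$, and moreover that $d(\kappa) \le d-1$ whenever no homogeneous global solution of frequency $\kappa$ can depend on all $n$ tangential variables — equivalently, whenever every translation-invariant blow-up reduces to a lower-dimensional problem whose admissible frequencies we already understand.

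\medskip
\textbf{Closing the argument.} The final step is the induction on $n$. In dimension $n=1$, the appendix computation cited before \eqref{eq.cffb} (see \cite[Proposition A.1]{FS18}) shows that \emph{every} homogeneous solution has frequency in $\{2m, 2m-\tfrac12, 2m+1\}_{m\in\N}$; hence there is no one-dimensional homogeneous solution of frequency $\kappa \in \Gamma_*$-type, so any blow-up $u_0$ at a point of $\Gamma_*(u)$ with a translation-invariance would have to restrict to such a nonexistent one-dimensional solution — contradiction. Therefore for $n=2$, no homogeneous global solution of frequency $\kappa \in \Gamma_*$-type has a translation-invariant direction, so $d(\kappa) = 0$, which gives that $\Gamma_*(u)$ is discrete (in fact $\dim_{\mathcal H}\Gamma_*(u) = 0$). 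Feeding this back into White's theorem in general dimension: a homogeneous global solution $u_0$ of frequency $\kappa\in\Gamma_*$-type cannot have two independent translation-invariant tangential directions, since restricting twice would land in dimension $n-2 \le \cdots$, ultimately in a two-dimensional problem with no such solution; more carefully, one argues that $u_0$ can have \emph{at most} $n-2$ translation-invariances of $\Gamma_*$-type (a single one would reduce to the $(n-1)$-dimensional case, which by induction has $\Gamma_*$-stratum of dimension $\le n-3$, and the dimension of $L(u_0)$ is bounded by $1$ plus that number). Hence $d(\kappa)\le n-2$ and $\dim_{\mathcal H}\Gamma_*(u)\le n-2$.

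\medskip
\textbf{Main obstacle.} The delicate point is the second step: establishing cleanly that a tangential translation-invariance of a homogeneous global solution $u_0$ genuinely produces a homogeneous global solution of the thin obstacle problem in dimension $n-1$ with the \emph{same} frequency, and — in the other direction — that the set of points realizing the maximal frequency is exactly the invariance subspace $L(u_0)$. The forward reduction is essentially immediate from the structure of \eqref{eq.top_b}, but the converse (a "constancy along the spine" statement of the type used in the stratification of the classical obstacle problem and in Almgren/Naber--Valtorta theory) requires the monotonicity of the Weiss energy $W_\kappa$ from subsection~\ref{ssec.epi} together with the fact that $W_\kappa(0^+,\cdot)$ being locally constant forces the corresponding differences of blow-ups to vanish along those directions. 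Once that rigidity is in hand, verifying the formal hypotheses of White's abstract theorem \cite{Whi97} and running the induction on $n$ is routine.
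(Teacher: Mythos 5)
Your proposal is correct and follows essentially the same route as the paper: a White-type dimension reduction based on the facts that blow-ups at points of $\Gamma_*(u)$ have translation-invariant subspaces of dimension at most $n-2$ (forced by the classification of two-dimensional homogeneous solutions), that the frequency is upper semicontinuous, and that points of frequency at least that of the homogeneous blow-up must lie in its invariant subspace. The only cosmetic difference is that the paper implements the reduction self-containedly (a compactness lemma, Lemma~\ref{lem.epsclose}, plus the covering Lemma~\ref{lem.DimLemma}) and bounds the spine dimension by reducing directly to the two-dimensional classification, rather than invoking White's abstract theorem together with an induction on $n$, which in your write-up is more roundabout than necessary.
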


In this proposition, $\dim_{\mathcal{H}}$ denotes the Hausdorff dimension of  a set (see \cite{Mat95}). 

In order to prove this result, we will need two lemmas. We will use the notation $u^{x_\circ}(x)$ for $x_\circ \in \Gamma(u)$ to denote translations. That is, we denote
\[
u^{x_\circ}(x) = u(x'+x_\circ', x_{n+1}),
\]
so that, in particular, $N(r, u, x_\circ) = N(r, u^{x_\circ})$. 

\begin{lem}
\label{lem.epsclose}
Let $u$ be a solution to the thin obstacle problem \eqref{eq.top_b}. Let ${\Gamma}_*(u)$ be as in \eqref{eq.gammastar}.

Let $y_\circ\in {\Gamma}_*(u)$. Then,  for every $\eps>0$ there exists some $\delta > 0$ such that for every $\rho\in (0, \delta]$, there exists an $(n-2)$-dimensional linear subspace $L_{y_\circ, \rho}$ of $\R^n\times\{0\}$ such that
\[
\big\{x\in B_\rho(y_\circ)\cap \{x_{n+1} = 0\} : N(0^+,  u^x) \ge N(0^+,  u^{y_\circ}) - \delta\big\}\subset \{x : {\rm dist}(x, y_\circ + L_{y_\circ, \rho}) < \eps \rho\}.
\]
\end{lem}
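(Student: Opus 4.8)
The plan is to argue by contradiction and compactness, using the strong $C^1_{\rm loc}$ convergence of blow-ups (Corollary~\ref{cor.strongconv}) together with a dimension-reduction argument à la White \cite{Whi97}. Suppose the statement fails for some $y_\circ\in\Gamma_*(u)$ and some $\eps>0$. After translating we may assume $y_\circ=0$. The failure means there is a sequence $\rho_j\downarrow 0$ and, along it, points $x_1^j,\dots,x_{n-1}^j\in B_{\rho_j}\cap\{x_{n+1}=0\}$ with $N(0^+,u^{x_i^j})\ge N(0^+,u)-1/j$ that are \emph{quantitatively affinely independent at scale $\rho_j$}: no $(n-2)$-plane through $0$ comes within $\eps\rho_j$ of all of them. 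Passing to the rescalings $u_{\rho_j}$ (as in \eqref{eq.blowupseq}), Corollary~\ref{cor.strongconv} gives a subsequence along which $u_{\rho_j}\to u_0$ strongly in $C^1_{\rm loc}(\{x_{n+1}\ge 0\})$, where $u_0$ is a $\lambda$-homogeneous global solution with $\lambda=N(0^+,u)$, and the rescaled points $z_i^j:=x_i^j/\rho_j\in B_1\cap\{x_{n+1}=0\}$ converge (up to a further subsequence) to points $z_1,\dots,z_{n-1}\in \overline{B_1}\cap\{x_{n+1}=0\}$ which, by the quantitative independence, span (together with $0$) an affine subspace of dimension $\ge n-1$; equivalently the linear span of $z_1,\dots,z_{n-1}$ has dimension $\ge n-1$, so $\{0,z_1,\dots,z_{n-1}\}$ is \emph{not} contained in any $(n-2)$-plane.

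The next step is upper semicontinuity of the frequency under this convergence: one shows $N(0^+,u_0^{z_i})\ge\lambda$ for each $i$. This follows because $N(r,u_{\rho_j},z_i^j)=N(r\rho_j,u,x_i^j)$ is monotone in its first argument and $N(0^+,u^{x_i^j})\ge\lambda-1/j$; combined with the $C^1_{\rm loc}$ convergence (which makes $r\mapsto N(r,u_{\rho_j},z_i^j)\to N(r,u_0,z_i)$ for each fixed $r$), letting $j\to\infty$ and then $r\downarrow 0$ gives $N(0^+,u_0^{z_i})\ge\lambda$. On the other hand $u_0$ is $\lambda$-homogeneous and $0\in\Gamma(u_0)$, so by Almgren monotonicity applied to $u_0$ one has $N(r,u_0)\equiv\lambda$; and the standard fact (Almgren, see \cite{GP09,PSU12}) that a point $z$ where a $\lambda$-homogeneous function has frequency $\ge\lambda$ must be a point of translation invariance, i.e. $u_0(x+tz)=u_0(x)$ for all $t$, forces each $z_i$ (hence their whole linear span $V$, $\dim V\ge n-1$) to lie in the translation-invariance subspace $L(u_0)$ from \eqref{eq.tis}. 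Thus $u_0$ is invariant under an $(\ge n-1)$-dimensional subspace of $\{x_{n+1}=0\}$.

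Finally one derives a contradiction with $\lambda=N(0^+,u)\notin\{2m,2m+1,2m-\tfrac12\}_{m\in\N}$. If $u_0$ is invariant under an $(n-1)$-dimensional subspace $V\subset\{x_{n+1}=0\}$, then $u_0$ depends only on one tangential variable and $x_{n+1}$, i.e. it is (after rotation) a $\lambda$-homogeneous global solution of the thin obstacle problem in dimension $n=1$; by the $n=1$ classification recalled in Section~\ref{sec.9} its homogeneity lies in $\{2m,2m+1,2m-\tfrac12\}_{m\in\N}$, contradicting $\lambda\in\Gamma_*$. (If the invariance subspace has dimension exactly $n-1$ this is immediate; dimension $n$ would force $u_0$ to depend on $x_{n+1}$ alone, which is not a nonzero solution. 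Note we cannot have higher-dimensional invariance since $L(u_0)\subset\{x_{n+1}=0\}$ for a nonzero solution, as $\de_{x_{n+1}}^+u_0\not\equiv 0$.) This contradiction proves the lemma; the main obstacle in a fully detailed write-up is the careful bookkeeping in the compactness/dimension-reduction step — in particular verifying that the quantitative affine-independence at scale $\rho_j$ survives the rescaling to genuine linear independence of the limit points $z_i$, and making the convergence $N(r,u_{\rho_j},z_i^j)\to N(r,u_0,z_i)$ rigorous uniformly for the finitely many points and a fixed small $r$.
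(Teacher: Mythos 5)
Your argument is correct and is essentially the paper's own proof: both proceed by contradiction, blow up $u$ at $y_\circ$ to a $\lambda$-homogeneous global solution $v_\circ$, use upper semicontinuity of Almgren's frequency (under the $C^1$/uniform convergence of the rescalings together with its monotonicity) to show that rescaled high-frequency points converge to points of frequency $\ge \lambda$ for $v_\circ$, invoke the fact that such points must lie in the invariant set $L(v_\circ)$, and use the two-dimensional classification of homogeneities to conclude $\dim L(v_\circ) \le n-2$ since $\lambda \notin \{2m, 2m+1, 2m-\frac12\}$. The only difference is organizational: the paper applies the negated inclusion to the specific plane $L = L(v_\circ)$, producing a single witness point per scale at distance $\ge \eps\rho_k$ from it, whereas you extract $n-1$ quantitatively independent points per scale and derive a dimension-count contradiction, which additionally requires a (standard, greedy) selection lemma and in fact only yields independence up to a dimensional constant times $\eps$ rather than $\eps$ itself --- harmless, and you correctly flag this as the remaining bookkeeping.
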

\begin{proof}
Let us denote $\eta = N(0^+,  u^{y_\circ})\in(2, \infty)\setminus \{2m, 2m+1, 2m+\frac32\}_{m\in \N}$. Let us proceed by contradiction. Suppose that there exist $\eps > 0$, and sequences $\delta_k \downarrow 0$ and $\rho_k\downarrow 0$ such that
\begin{equation}
\label{eq.contr_a}
\{x\in B_{\rho_k}(y_\circ)\cap \{x_{n+1} = 0\} : N(0^+,  u^x) \ge \eta - \delta_k\}\not\subset \{x : {\rm dist}(x, y_\circ + L) < \eps \rho_k\}
\end{equation}
for every $(n-2)$-dimensional linear subspace $L$ of $\R^n\times\{0\}$.

In particular, if we denote $ u^{y_\circ}_r =  u^{y_\circ}(r\,\cdot )$ and $d_r = r^{-n/2}\|u^{x_\circ}\|_{L^2(\de B_r)}$, then $ u^{y_\circ}_{\rho_k} /d_{\rho_k}$ converges, up to subsequences, to some $v_\circ$ a global solution to the thin obstacle problem with zero obstacle, homogeneous of degree $\eta$. Let us denote $L(v_\circ)$ the invariant set in $\R^n\times\{0\}$ of $v_\circ$. In particular, it is a subspace of dimension at most $n-2$ (this follows since two dimensional homogeneous solutions to the thin obstacle problem have homogeneity belonging to $\{2m, 2m +1, 2m-\frac12\}_{m\in \N}$). As an abuse of notation, let us take as $L(v_\circ)$ any $(n-2)$-dimensional plane containing the invariant set.

Now, by assumption \eqref{eq.contr_a} and choosing $L = L(v_\circ)$, for every $k\in \N$ there exists some $x_k\in B_{\rho_k}(y_\circ)\cap\{x_{n+1} = 0\}$ with $N(0^+,  u^{x_k}) \ge \eta - \delta_k$ such that ${\rm dist}(x_k, y_\circ + L(v_\circ)) \ge \eps \rho_k$.

Let us denote $ z_k = \rho_k^{-1} (x_k-y_\circ)\in B_1(0)$, and notice that ${\rm dist}(z_k,  L(v_\circ)) \ge \eps $. By scaling, we know that
\[
N(0^+,  u^{x_k}) = N(0^+, u^{y_\circ}_{\rho_k}(\,\cdot + z_k)).
\]
Moreover, 
\[
d_{\rho_k}^{-1} u^{y_\circ}_{\rho_k} \to v_\circ \quad\textrm{uniformly in compact sets as }k\to \infty.
\]

Thus,
\[
\eta - \delta_k \le N(0^+,  u^{x_k}) = N(0^+, u^{y_\circ}_{\rho_k}(\,\cdot + z_k)) = N(0^+, d_{\rho_k}^{-1} u^{y_\circ}_{\rho_k}(\,\cdot + z_k)),
\]
and by the upper semi-continuity of the frequency function (and after taking a subsequence such that $z_k\to z\in B_1(0)$) we get that
\[
N(0^+, v_\circ(\,\cdot + z))\ge \eta,
\]
for some $z\in B_1(0)$ such that ${\rm dist}(z, L(v_\circ)) \ge \eps$. Since $v_\circ$ is $\eta$-homogeneous, $N(0^+, v_\circ(\,\cdot + z))\ge \eta$ implies that $z$ belongs to the invariant set of $v_\circ$ (see, for instance, \cite[Lemma 5.2]{FS18}). This contradicts ${\rm dist}(z, L(v_\circ)) \ge \eps$, and we are done.
\end{proof}

The following is a very general and standard lemma. We give the proof for completeness.
We thank B. Krummel, from whom we learned this proof.

\begin{lem}
\label{lem.DimLemma}
There exists $\beta:(0,\infty) \rightarrow(0,\infty)$ with $\beta(t) \to 0$ as $t\downarrow 0$, such that the following holds true.

Let $\eps > 0$.  Let $A \subseteq \R^n$ such that for each $y \in A$ and $\rho \in (0,\rho_\circ)$ there exists a $j$-dimensional linear subspace $L_{y,\rho}$ of $\R^n$ for which
\[
A \cap B_{\rho}(y) \subset \{ x: {\rm dist}(x, y+L_{y,\rho}) < \eps\rho \} .
\]
(Note that we do not claim that $L_{y,\rho}$ is unique.)  Then $\mathcal{H}^{j+\beta(\eps)}(A) = 0$.
\end{lem}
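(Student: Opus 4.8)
The plan is to run a standard iterated–covering argument, reading off $\beta$ from the number of balls produced at each step. First I would reduce to the case that $A$ lies in a single ball $B_{\rho_1}(y_1)$ with $y_1\in A$ and $\rho_1<\rho_\circ$: since $\{B_{\rho_\circ/2}(y)\}_{y\in A}$ covers $A$ and $\R^n$ is second countable, $A$ is a countable union of such pieces, so it suffices to bound $\mathcal{H}^{j+\beta(\eps)}$ of each. I would also restrict attention to $\eps$ small, which is the only regime that gets used (and the only one in which the statement is substantive); for larger $\eps$ one simply extends $\beta$ arbitrarily, keeping it positive and $\beta(t)\to0$ as $t\downarrow0$ — note that when $j\ge1$ the conclusion $\mathcal{H}^{j+n}(A)=0$ is in any case immediate for $A\subset\R^n$.

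The core is a single–scale covering estimate: \emph{for every $y\in A$ and $\rho<\rho_\circ$, the set $A\cap B_\rho(y)$ can be covered by at most $N:=C(n)\,\eps^{-j}$ balls of radius $2\eps\rho$ centered at points of $A$}, for a dimensional constant $C(n)$. Indeed, by hypothesis $A\cap B_\rho(y)$ sits inside the slab $\{x:{\rm dist}(x,y+L_{y,\rho})<\eps\rho\}\cap B_\rho(y)$, whose $\eps\rho$-neighbourhood fits inside the product of a $j$-dimensional ball of radius $2\rho$ and an $(n-j)$-dimensional ball of radius $2\eps\rho$, hence has volume at most $C(n)\rho^{j}(\eps\rho)^{n-j}$. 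A maximal $2\eps\rho$-separated subset $\{z_i\}$ of $A\cap B_\rho(y)$ then has at most $C(n)\rho^{j}(\eps\rho)^{n-j}/(\eps\rho)^{n}=C(n)\eps^{-j}$ elements, since the disjoint balls $B_{\eps\rho}(z_i)$ all lie in that neighbourhood; and the balls $B_{2\eps\rho}(z_i)$ cover $A\cap B_\rho(y)$ while being centred in $A$.

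Now I would iterate, writing $\theta:=2\eps$ (which is $<1$ once $\eps$ is small). Applying the single–scale estimate repeatedly — legitimate because every radius produced stays below $\rho_\circ$ — after $k$ steps $A\cap B_{\rho_1}(y_1)$ is covered by at most $N^{k}$ balls of radius $\theta^{k}\rho_1$. Hence, for every $s>0$,
\[
\mathcal{H}^{s}_{\theta^{k}\rho_1}\bigl(A\cap B_{\rho_1}(y_1)\bigr)\ \le\ N^{k}\,(\theta^{k}\rho_1)^{s}\ =\ \rho_1^{s}\,(N\theta^{s})^{k},
\]
which tends to $0$ as $k\to\infty$ provided $N\theta^{s}<1$, i.e.\ provided
\[
s>s_0:=\frac{\log N}{\log(1/\theta)}=\frac{\log C(n)+j\log(1/\eps)}{\log(1/(2\eps))}=j+\frac{\log\bigl(C(n)2^{j}\bigr)}{\log(1/(2\eps))}.
\]
Since $\log(C(n)2^{j})\le\log(C(n)2^{n})$ for all $j\le n$, the quantity $s_0-j$ tends to $0$ as $\eps\downarrow0$ uniformly in $j\in\{0,\dots,n\}$, so I would set $\beta(\eps):=2\log(C(n)2^{n})/\log(1/(2\eps))$ for $\eps$ small (and $\beta(\eps):=n$ otherwise): this is positive, tends to $0$ as $\eps\downarrow0$, and satisfies $j+\beta(\eps)>s_0$. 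Then $\mathcal{H}^{j+\beta(\eps)}(A\cap B_{\rho_1}(y_1))=0$, and summing over the countable decomposition gives $\mathcal{H}^{j+\beta(\eps)}(A)=0$.

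The only genuinely delicate point is the single–scale count: the whole scheme works precisely because the separated points lie inside the \emph{thin} slab, so that their number is controlled by $\eps^{-j}$ rather than $\eps^{-n}$; this is exactly what forces the exponent $s_0$ to converge to $j$, and not to $n$, as $\eps\to0$. Everything else is routine bookkeeping, the remaining minor care being to keep every ball centre inside $A$ so that the hypothesis can be re-applied at the next scale.
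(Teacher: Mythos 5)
Your argument is correct and is essentially the paper's proof: an iterated covering at geometric scales $\sim(\eps\rho)$, with at most $C(n)\eps^{-j}$ balls per step centered at points of $A$ (the paper gets this count by covering $L_{0,1}$ and recentering at nearby points of $A$; your maximal $2\eps\rho$-separated set plus volume comparison in the slab gives the same bound), and then reading off the critical exponent, which is the explicit form of the paper's choice of $\beta$ with $C(j)\eps^{\beta}\le 1/2$. The one slip is the fallback regime: for $\eps$ not small you set $\beta(\eps):=n$, which does not give the conclusion when $j=0$ (for $\eps\ge 1$ the hypothesis is vacuous and $A$ could be a ball, so $\mathcal{H}^{n}(A)>0$); you need $j+\beta(\eps)>n$ there, e.g.\ $\beta(\eps):=n+1$, exactly as the paper does by taking $\beta(t)=n+1-j$ for $t\ge 1/8$ --- a trivial fix that does not affect the substance.
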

\begin{proof}
Let $\beta(t) = n+1-j$ for $t \geq 1/8$ and observe that $\mathcal{H}^{n+1}(A) = 0$.  Thus it suffices to consider $\eps \in (0,1/8)$.

By a covering argument, after rescaling and translating, we may assume that $A \subseteq B_1(0)$ and $0 \in A$.  By assumption, there exists a subspace $L_{0,1}$ such that
\[
A \cap B_1(0) \subset \{ x : {\rm dist}(x, y+L_{0,1}) < \eps \}.
\]
We now cover $L_{0,1}$ by a finite collection of balls $\{B_{2\eps}(z_k)\}_{k = 1,2,\dots,N}$ where $z_k \in L_{0,1}$ for each $k$ and $N \leq C(j) \eps^{-j}$. Observe that $\{B_{2\eps}(z_k)\}_{k = 1,2,\dots,N}$ covers $\{x : {\rm dist}(x, y+L_{0,1}) < \eps \}$ and thus it covers $A \cap B_1(0)$ as well.  Now discard the balls that do not intersect $A$, and for the remaining balls let $y_k \in A \cap B_{2\eps}(z_k)$, so that $\{B_{4\eps}(y_k)\}_{k = 1,2,\dots,N}$ covers $A \cap B_1(0)$, $y_k \in A$, $N \leq C(j) \eps^{-j}$, and $N (4\eps)^{j+\beta} \leq C(j) \eps^{\beta}$.  Choose $\beta =\beta(\eps)$ so that $C(j)\eps^\beta \leq 1/2$.

Now observe that we can repeat this argument with $B_{4\eps}(y_k)$ in place of $B_1(0)$ to get a new covering $\{B_{(4\eps)^2}(y_{k,l})\}_{l=1,2,\dots,N_k}$ of $A \cap B_{4\eps}(y_k)$ with $N_k (4\eps)^{j+\beta} < 1/2$.  Thus $\{B_{(4\eps)^2}(y_{k,l})\}_{k = 1,2,\dots,N, \, l=1,2,\dots,N_k}$ covers $A$ with $y_{k,l} \in A$ and $\sum_{k=1}^N N_k (4\eps)^{2 \cdot (j+\beta)} < (1/2)^2$.  Repeating this argument for a total of $p$ times, we get a finite covering of $A$ by $M$ balls centered on $A$, radii equal to $(4\eps)^p$ (recall $4\eps \le \frac12$), and $M (4\eps)^{p(j+\beta)} < (1/2)^p$.  Thus $\mathcal{H}^{j+\beta}_{(4\eps)^p}(A) \le C(1/2)^p$ for every integer $p = 1,2,3,\dots$.  Letting $p \to \infty$, we get $\mathcal{H}^{j+\beta(\eps)}(A) = 0$.
\end{proof}

Thus, we can directly prove Proposition~\ref{prop.smallother}.

\begin{proof}[Proof of Proposition~\ref{prop.smallother}]
We want to show that ${\Gamma}_*(u)$ has Hausdorff dimension at most $n-2$.  Let $\eps > 0$ and define, for $i \in \N$, $G_i$ to be the set of all points $x_\circ \in {\Gamma}_*(u)$ such that the conclusion of Lemma~\ref{lem.epsclose} holds true with $\delta = 1/i$. In particular, by Lemma~\ref{lem.epsclose} we have ${\Gamma}_*(u) = \bigcup_i G_i$.  For each $q \in \N$, define
\[
G_{i,q} = \{ x_\circ \in G_i : (q-1)/i < N(0^+,  u^{x_\circ})\leq q/i \} .
\]
Observe that ${\Gamma}_*(u) = \bigcup_{i,q} G_{i, q}$, and for every $x_\circ \in G_{i, q}$, $N(0^+, u^{x_\circ})-1/i \le (q-1)/i$ so that we have
\[
G_{i,q} \subset \{ y :  N(0^+,  u^{y}) >  N(0^+,  u^{x_\circ}) - 1/i \}
\]
so that, by  Lemma~\ref{lem.epsclose}, for every $\rho \in (0,1/i]$ there exists a $(n-2)$-dimensional linear subspace $L_{x_\circ,\rho}$ of $\mathbb{R}^n \times \{0\}$ such that
\[
G_{i, q} \cap B_{\rho}(x_\circ) \subset \{ x : {\rm dist}(x, x_\circ+L_{x_\circ,\rho}) < \eps \rho \} .
\]
Now, by Lemma~\ref{lem.DimLemma} with $A = G_{i,q}$ (taking $\rho_\circ = 1/i$ uniform on $G_{i,q}$), $\mathcal{H}^{n-2+\beta(\eps)}(G_{i,q}) = 0$.  Hence $\mathcal{H}^{n-2+\beta(\eps)}({\Gamma}_*(u)) = 0$.  Since $\eps$ is arbitrary, for all $\beta > 0$ we have $\mathcal{H}^{n-2+\beta}({\Gamma}_*(u)) = 0$, and thus ${\Gamma}_*(u)$ has Hausdorff dimension at most $n-2$.

The fact that for $n=2$, ${\Gamma}_*(u)$ is discrete, follows by similar arguments in a standard way.
\end{proof}

\section{$C^\infty$ obstacles}
\label{sec.cinf}

Let us suppose now that the obstacle $\varphi\in C^\infty(B_1')$ is not analytic, and therefore, we cannot reduce to the zero obstacle situation. Our problem is then
\begin{equation}
\label{eq.thinobst_intro_4}
  \left\{ \begin{array}{rcll}
    u& \ge &\varphi & \textrm{ on } B_1\cap \{x_{{n+1}} = 0\}\\
  \Delta u&=&0 & \textrm{ in } B_1\setminus(\{x_{n+1} = 0\}\cap \{u = \varphi\})\\
  \Delta u&\le&0 & \textrm{ in } B_1,
  \end{array}\right.
\end{equation}
where, as before, we are assuming that our solution is even in the $x_{n+1}$-variable.

Let us assume that 0 is a free boundary point, $0\in \partial_{\R^{n}}\{ u = \varphi\}$. Given $\tau \in \N_{\ge 2}$, let us consider the $\tau$-order expansion of $\varphi(x')$ at 0, given by $Q_\tau(x')$. In particular, $(\varphi - Q_\tau)(x') = O(|x'|^{\tau+1})$. Let $Q_\tau^h(x', x_{n+1})$ be the unique even harmonic extension of $Q_\tau$ to $B_1$. Let us now define
\[
\bar u(x', x_{n+1}) := u(x', x_{n+1}) - \varphi(x') + Q_\tau(x') - Q_\tau^h(x', x_{n+1}).
\]

Then, $\bar u$ solves the zero thin obstacle problem with a right-hand side, 
\[
  \left\{ \begin{array}{rcll}
    \bar u& \ge &0 & \textrm{ on } B_1\cap \{x_{{n+1}} = 0\}\\
  \Delta \bar u&=&f & \textrm{ in } B_1\setminus(\{x_{n+1} = 0\}\cap \{u = \varphi\})\\
  -\Delta \bar u&\ge&f & \textrm{ in } B_1,
  \end{array}\right.
\]
where 
\[
f(x) = \Delta_{x'}(Q_\tau(x') - \varphi(x')) = O(|x'|^{\tau -1}). 
\]

Since $|f|\le M |x'|^{\tau-1}$ and $\|\nabla u\|_{L^\infty(B_{1/2})}\le M$ for some constant $M >0$, we can consider the generalized frequency formula, 
\[
\Phi_{\tau}(r, \bar u) := (r + C_M r^2) \frac{d}{dr}\log\max\left\{H(r), r^{n+2\tau}\right\}, \quad\text{where}\quad H(r) := \int_{\partial B_r} \bar u^2,
\]
(cf. \eqref{eq.compwith}) and the constant $C_M$ depends only on the dimension and $M$. Then, there exists some $r_M > 0$ such that  $\Phi_\tau(r, \bar u)$ is non-decreasing for $0 < r < r_M$. In particular, $\Phi_\tau(0^+, \bar u)$ is well-defined and 
\[
n+3\le \Phi_\tau(0^+, \bar u) \le n +2\tau
\]
(see \cite{CSS08, GP09}). We say that the origin is a free boundary point of order $\kappa< \tau$ if $\Phi_\tau(0^+, \bar u) = n+2\kappa$ (in particular, as before, $\kappa \ge \frac32$). If $\kappa = \tau$, we say that the origin is a free boundary point of order \emph{at least} $\tau$. At this point, all the theory developed above for regular free boundary points and singular points, also applies to the situation where there are non-analytic (i.e., non-zero) obstacles, by using the new generalized frequency formula. Notice that this theory can be developed even if the obstacle $\varphi$ has lower regularity than $C^\infty$.  

Finally, we say that the origin is a free boundary point of infinite order if it is of order \emph{at least} $\tau$ for all $\tau > 0$. Notice that this set of free boundary points has not appeared until now, as it did not exist in the zero obstacle case, because in that case the frequency is always finite. 

Intuitively, in the thin obstacle problem \eqref{eq.thinobst_intro_4} a point is of order $\kappa$ when the solution $u$ detaches from the obstacle at order $\kappa$ on the thin space. 

Thus, the free boundary for solutions to the thin obstacle problem with $\varphi\in C^\infty(B_1')$, \eqref{eq.thinobst_intro_4}, can be split as
\[
\Gamma(u) = \Gamma_{3/2}(u) \cup \Gamma_{\rm even}(u) \cup \Gamma_{\rm odd}(u) \cup \Gamma_{\rm half}(u) \cup \Gamma_*(u)\cup \Gamma_\infty(u), 
\]
(cf. \eqref{eq.cffb}), where the new set $\Gamma_\infty(u)$ denotes the set of free boundary points with infinite order. 

The set of points in $\Gamma_\infty(u)$ can be very wild. In fact, the following holds.
\begin{prop}[\cite{FR19}]
\label{prop.Ccontact}
Let $\mathcal{C}\subset B_{1/2}'\subset \R^n$ be any closed set. Then, there exists an an obstacle $\varphi\in C^\infty(B_1')$ and non-trivial solution $u$ to \eqref{eq.thinobst_intro_4} such that $\Lambda(u)\cap B_{1/2} = \{u = \varphi\}\cap B_{1/2} = \mathcal{C}$.
\end{prop}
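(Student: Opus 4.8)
The plan is to construct $\varphi$ and $u$ essentially by prescribing, first, a function on the thin space that realizes $\mathcal{C}$ as its zero set in the appropriate sense, and then forcing the obstacle to match. The starting point is the classical fact (Whitney) that for \emph{any} closed set $\mathcal{C}\subset \R^n$ there exists a function $h\in C^\infty(\R^n)$ with $h\ge 0$ and $\{h = 0\} = \mathcal{C}$, and moreover one can arrange $h$ to vanish to infinite order on $\mathcal{C}$ (e.g. take $h$ flat at every point of $\mathcal{C}$). The idea is to look for a solution $u$ on $B_1$, even in $x_{n+1}$, such that on the thin space $u(x',0) - \varphi(x')$ is a nonnegative function whose zero set is exactly $\mathcal{C}\cap B_{1/2}$, while the Signorini complementarity conditions $-\partial_{x_{n+1}}^+ u \ge 0$ on $\{x_{n+1}=0\}$ and $\partial_{x_{n+1}}^+ u\cdot(u-\varphi) = 0$ hold. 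Since we have complete freedom in choosing $\varphi$, the strategy is to reverse the usual logic: pick the solution first, then \emph{define} the obstacle to be $\varphi := u(\cdot,0)$ on $\mathcal{C}$ and slightly below $u(\cdot,0)$ off $\mathcal{C}$, in a smooth way.

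Concretely, first I would fix a nonnegative $w\in C^\infty(\R^n)$, flat on $\mathcal{C}$, with $\{w=0\} = \mathcal{C}$, supported near $B_{1/2}'$, and consider its even harmonic-type extension or, better, let $u$ be the solution of the thin obstacle problem on $B_1$ with obstacle $\psi$ constructed so that the contact set is forced. The cleanest route: choose a measure $\mu\ge 0$ on $\{x_{n+1}=0\}$ that is supported exactly on $\mathcal{C}\cap \overline{B_{1/2}'}$ and is smooth as a density away from nowhere — actually $\mu$ need not have a density; take $\mu = -\Delta U$ where $U$ is the Newtonian-type (or rather the harmonic single-layer) potential of a smooth nonnegative density carried by $\mathcal{C}$. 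Then set $u$ to be $U$ plus a correction so that $u$ is even in $x_{n+1}$, $\Delta u = -2\partial_{x_{n+1}}^+u\,\mathcal H^n\mres\{x_{n+1}=0\}\le 0$ with the normal-derivative measure supported in $\mathcal{C}$, and such that $u(\cdot,0)$ is smooth. By the flatness of the density on $\mathcal C$, one checks $u(\cdot,0)\in C^\infty$. Finally define $\varphi\in C^\infty(B_1')$ to agree with $u(\cdot,0)$ on a neighborhood of $\mathcal{C}$ and to be strictly below $u(\cdot,0)$ on $B_{1/2}'\setminus\mathcal{C}$; this is possible by a standard smooth partition-of-unity / Whitney gluing argument, using that $u(\cdot,0)-\varphi$ should be smooth, nonnegative, and vanish exactly on $\mathcal{C}\cap B_{1/2}'$ — which is exactly the Whitney construction applied to the closed set $\mathcal{C}$ relative to $B_{1/2}'$. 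Then $u$ solves \eqref{eq.thinobst_intro_4} with obstacle $\varphi$ and $\Lambda(u)\cap B_{1/2} = \mathcal{C}$ by construction. One should check the outer boundary data $g := u|_{\partial B_1}$ are admissible ($g\ge \varphi$ on $\partial B_1\cap\{x_{n+1}=0\}$, which holds if $\varphi$ is taken small enough outside $B_{1/2}'$), and that $u$ is non-trivial (true as long as $\mathcal C\neq\varnothing$; if $\mathcal C=\varnothing$ take any solution with small nonempty contact set, or interpret trivially).

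The main obstacle I expect is verifying that the normal-derivative measure $-\partial_{x_{n+1}}^+u\,\mathcal H^n\mres\{x_{n+1}=0\}$ can be made to be supported \emph{exactly} on $\mathcal{C}$ while keeping $u(\cdot,0)$ of class $C^\infty$: a single-layer potential of a smooth density on a wild closed set $\mathcal{C}$ need not have smooth trace, because differentiating across the layer sees the full geometry of $\mathcal C$. The fix is to use the infinite-order flatness: choose the carried density $\rho$ to vanish to infinite order at every point of $\mathcal C$ (possible since $\mathcal C$ is closed and we may take $\rho$ flat on $\mathcal C$), so that all derivatives of the potential $U$ are controlled by $\mathrm{dist}(\cdot,\mathcal C)$ times the flat density, and the trace $U(\cdot,0)$ extends to a $C^\infty$ function that is flat on $\mathcal C$. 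This is precisely the place where one invokes the construction in \cite{FR19}, which carries this out; I would reference that construction for the technical estimates rather than redo them, since the heart of the matter is the interplay between infinite-order vanishing and the regularity of layer potentials. The remaining steps (defining $\varphi$, checking complementarity and the obstacle identity $u-\varphi\ge 0$ with equality exactly on $\mathcal C$, admissibility of $g$) are routine Whitney-extension and maximum-principle bookkeeping.
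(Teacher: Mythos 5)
Your guiding idea---choose the solution first and then manufacture the obstacle so that $u-\varphi$ is a nonnegative smooth function vanishing exactly on $\mathcal{C}$ (a Whitney-type function)---is exactly the right one, and it is the paper's trick. But the way you implement it has a genuine flaw: you insist that $u$ be (essentially) a single-layer potential whose Laplacian is a nontrivial nonpositive measure supported exactly on $\mathcal{C}$, with smooth trace on the thin space. This cannot work for a general closed set. Any solution of the thin obstacle problem with smooth obstacle is $C^{1}$ up to the thin space, so by \eqref{eq.delta_xn} its distributional Laplacian is $2\,\de_{x_{n+1}}^+u\,\mathcal{H}^n\mres\{x_{n+1}=0\}$, i.e.\ absolutely continuous with respect to $\mathcal{H}^n$ on the thin space. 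If $\mathcal{C}$ has zero $\mathcal{H}^n$-measure (a point, a Cantor set, any lower-dimensional set), a measure of this form supported in $\mathcal{C}$ is necessarily zero, so your layer potential is trivial; and if instead you allow a singular measure carried by $\mathcal{C}$, its potential is not even continuous in general, so $u$ would not be an admissible solution. Your proposed fix is also internally inconsistent: a ``density $\rho$ carried by $\mathcal{C}$'' that ``vanishes to infinite order at every point of $\mathcal{C}$'' is the zero measure. (There is a smaller slip as well: $\varphi$ cannot both agree with $u(\cdot,0)$ on a \emph{neighborhood} of $\mathcal{C}$ and be strictly below it on $B_{1/2}'\setminus\mathcal{C}$; what you want, and say later, is equality exactly on $\mathcal{C}$.) Finally, deferring ``the technical estimates'' on flat densities and layer potentials to the cited reference does not close the gap, since no such estimates are what is needed.

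The paper's proof shows how to get non-triviality without putting any mass on $\mathcal{C}$: the contact on $\mathcal{C}$ is \emph{force-free}. One takes a smooth obstacle $\bar\phi$ supported in a small ball far from $B_{1/2}'$ (say $B_{1/8}(\frac34\be_1)$), positive somewhere, and lets $u$ be the (non-trivial) solution of \eqref{eq.thinobst_intro_4} with obstacle $\bar\phi$; then $u>\bar\phi$ on $B_{1/2}'$, so $\de^+_{x_{n+1}}u=0$ there and $u\in C^\infty(B_{1/2})$. With $f_{\mathcal{C}}\in C^\infty$, $0\le f_{\mathcal{C}}\le 1$, $\{f_{\mathcal{C}}=0\}=\mathcal{C}$, and a cutoff $\eta\in C^\infty_c(B'_{5/8})$, $\eta\equiv 1$ on $B'_{1/2}$, one sets
\[
\varphi \;=\; \bar\phi + \eta\,(u-\bar\phi)\,(1-f_{\mathcal{C}}),
\]
so that $u-\varphi=(u-\bar\phi)\bigl(1-\eta(1-f_{\mathcal{C}})\bigr)\ge 0$, with equality in $B_{1/2}'$ exactly on $\mathcal{C}$. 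Since the old contact set $\{u=\bar\phi\}$ is contained in the new one, $u$ is still a solution with obstacle $\varphi$, and $\Lambda(u)\cap B_{1/2}=\mathcal{C}$. If you drop the layer-potential construction and replace it by this ``touch $u$ from below with $\varphi$'' step (your own second-paragraph idea, made precise multiplicatively via $f_{\mathcal C}$), your argument becomes correct and elementary.
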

\begin{proof}
Take any obstacle ${\bar\phi}\in C^\infty(\R^n)$ such that ${\rm supp}\,{\bar\phi}\subset\subset B_{1/8}(\frac34\boldsymbol{e}_1)$, with ${\bar\phi}> 0$ somewhere, and take the non-trivial solution to \eqref{eq.thinobst_intro_4} with obstacle ${\bar\phi}$.

Notice that $ u > {\bar\phi}$ in $B_{1/2}'$ (in particular, $u\in C^\infty(B_{1/2})$). Let $f_{\mathcal{C}}:B_1'\to \R$ be any $C^\infty$ function such that $0\le f_{\mathcal{C}} \le 1$ and $\mathcal{C} = \{f_{\mathcal{C}} = 0\}$.

Now let $\eta\in C^\infty_c(B'_{5/8})$ such that $\eta \ge 0$ and $\eta \equiv 1 $ in $B'_{1/2}$. Consider, as new obstacle, $\varphi = {\bar\phi} + \eta(u-{\bar\phi})(1-f_\mathcal{C}) \in C^\infty(B'_{1/2})$. Notice that $u - \varphi\ge 0$. Notice, also, that for $x'\in B_{1/2}$, $(u - \varphi)(x') =  0$ if and only if $x'\in \mathcal{C}$. Thus, $u$ with obstacle $\varphi$ gives the desired result.
\end{proof}

That is, the contact set can, a priori, be any closed set. In particular, the free boundary can have arbitrary Hausdorff dimension ($n-\eps$ for any $\eps > 0$). It is worth mentioning that the points constructed like this do not exert a force as an obstacle (that is, the Laplacian around them vanishes). 
 
\section{Generic regularity}
\label{sec.11}
We have seen that, in general, the non-regular (or degenerate) part of the free boundary can be of the same size (or even larger, in the case of $C^\infty$ obstacles) than the regular part. This is not completely satisfactory, since we only know how to prove smoothness of the free boundary around regular points. 

It is for this reason that generic regularity results are specially interesting: even if there exist solutions where degenerate points are larger than regular points, we will see that this is not true for a generic solution. That is, for \emph{almost every} solution, the free boundary is smooth up to a lower dimensional set. Let us start by defining what we mean by ``almost every'' solution. 

Let $\varphi\in C^\infty(B_1')$ and let $g\in C^0(\partial B_1)$ even with respect to $x_{n+1}$. Let $\lambda\in [0, 1]$, and let $u_\lambda$ be the solution to 
\begin{equation}
\label{eq.ulambda}
  \left\{ \begin{array}{rcll}
  u_\lambda& \ge &\varphi & \textrm{ on } B_1\cap \{x_{{n+1}} = 0\}\\
  \Delta u_\lambda&=&0 & \textrm{ in } B_1\setminus(\{x_{n+1} = 0\}\cap \{u = \varphi\})\\
  \Delta u_\lambda&\le&0 & \textrm{ in } B_1\\
  u_\lambda & = & g+\lambda & \textrm{ on } \partial B_1. 
  \end{array}\right.
\end{equation}

That is, we consider the set of solutions $\{u_\lambda\}_{\lambda\in [0, 1]}$ with a fixed obstacle $\varphi$ by raising the boundary datum by $\lambda$. Alternatively, we could raise (or lower) the obstacle, or just make small perturbations (monotone) of the boundary value. We say that a property holds for \emph{almost every} solution if it holds for a.e. $\lambda\in [0, 1]$ for any such construction of solutions. 

Now notice that since points of order $\kappa$ are detaching from the obstacle with power $\kappa$, when raising the boundary datum, the larger $\kappa$ is, the faster the free boundary is disappearing (and thus, the less common is that type of point). As a consequence, establishing a quantitative characterization of this fact together with a GMT lemma (coming from \cite{FRS19}), one can show the following proposition. We recall that, given a solution $v$ to a thin obstacle problem \eqref{eq.thinobst_intro_4}, we denote by $\Gamma_{\ge \kappa}(v)$ the set of free boundary points of order (or frequency) greater or equal than $\kappa$. 
 
 \begin{prop}[\cite{FR19}]
 \label{prop.gammakappa}
 Let $\varphi\in C^\infty(B_1')$ and let $g\in C^0(\partial B_1)$ even with respect to $x_{n+1}$. Let $\{u_\lambda\}_{\lambda\in [0,1]}$ the family of solutions to the thin obstacle problem \eqref{eq.ulambda}. Then,
 \begin{itemize}
 \item If $3 \le \kappa \le n+1$, the set $\Gamma_{\ge \kappa} (u_\lambda) $ has Hausdorff dimension at most $n-\kappa + 1$ for almost every $\lambda\in [0, 1]$. 
 \item If $\kappa > n+1$, the set $\Gamma_{\ge \kappa} (u_\lambda) $ is empty for all $\lambda\in [0, 1]\setminus \mathcal{E}_\kappa$, where $\mathcal{E}_\kappa$ has Hausdorff dimension at most $\frac{n}{\kappa-1}$. 
 \item The set $\Gamma_\infty(u_\lambda)$ is empty for all $\lambda\in [0, 1]\setminus \mathcal{E}$, where $\mathcal{E}$ has Minkowski dimension equal to 0.
 \end{itemize}
 \end{prop}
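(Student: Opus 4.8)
\emph{Plan.} All three statements follow from one quantitative fact — free boundary points of order $\ge\kappa$ are ``erased'' at a definite rate when the boundary datum is raised — together with an elementary covering argument. I would first reduce to this fact, then prove it, then deduce the three bullets.

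\emph{Step 1: monotonicity and the key estimate.} First I would note that $\lambda\mapsto u_\lambda$ is nondecreasing: for $\lambda<\mu$ the functions $\min\{u_\lambda,u_\mu\}$ and $\max\{u_\lambda,u_\mu\}$ are admissible for \eqref{eq.ulambda} at levels $\lambda$ and $\mu$ respectively, and since $\int|\nabla\min|^2+\int|\nabla\max|^2=\int|\nabla u_\lambda|^2+\int|\nabla u_\mu|^2$, minimality and uniqueness force $\min\{u_\lambda,u_\mu\}=u_\lambda$, i.e.\ $u_\lambda\le u_\mu$; hence $\varphi\le u_\lambda\le u_\mu$ and $\Lambda(u_\mu)\subseteq\Lambda(u_\lambda)$. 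The generalized Almgren frequency $\Phi_\tau$ of Section~\ref{sec.cinf} (with $\tau$ a large fixed integer, available since $\varphi\in C^\infty$) and its associated growth estimates make ``order $\ge\kappa$'' well defined and upper semicontinuous, with constants uniform in $\lambda\in[0,1]$ (they depend only on $n$, $\varphi$, $\|g\|_{L^\infty}$). The claim to prove is: \emph{there is $C>0$ such that for all $x_\circ\in B_{1/2}\cap\{x_{n+1}=0\}$, all small $\rho$, and all $0\le\lambda_1<\lambda_2\le1$, if $\Gamma_{\ge\kappa}(u_{\lambda_1})\cap B_\rho(x_\circ)\neq\varnothing$ and $\Gamma_{\ge\kappa}(u_{\lambda_2})\cap B_\rho(x_\circ)\neq\varnothing$, then $\lambda_2-\lambda_1\le C\rho^{\kappa-1}$.}

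\emph{Step 2: proof of the disappearance estimate.} Pick $z_i\in\Gamma_{\ge\kappa}(u_{\lambda_i})\cap B_\rho(x_\circ)$. Since $z_i$ is a free boundary point of order $\ge\kappa$, the generalized monotonicity formula (used exactly as Lemma~\ref{lem.Almgren2} and \eqref{eq.intro_growthu} are used in the zero-obstacle case) gives $\|u_{\lambda_i}-\bar\varphi_{z_i}\|_{L^\infty(B_{5\rho}(z_i))}\le C\rho^\kappa$, where $\bar\varphi_{z_i}$ is the even harmonic extension of the Taylor polynomial of $\varphi$ at $z_i$; as $z_1,z_2\in B_\rho(x_\circ)$, these two even harmonic polynomials differ by $\le C\rho^\kappa$ on $B_{4\rho}(x_\circ)$, so
\[
\|u_{\lambda_2}-u_{\lambda_1}\|_{L^\infty(B_{4\rho}(x_\circ))}\le C\rho^\kappa .
\]
On the other hand, $w:=u_{\lambda_2}-u_{\lambda_1}\ge0$ is harmonic in $B_1^+$, equals $\lambda_2-\lambda_1$ on $\partial B_1\cap\{x_{n+1}>0\}$ and is $\ge0$ on $\{x_{n+1}=0\}$; comparing with $(\lambda_2-\lambda_1)\widetilde h$, where $\widetilde h$ is harmonic in $B_1^+$ with $\widetilde h=1$ on the spherical part of $\partial B_1^+$ and $\widetilde h=0$ on $\{x_{n+1}=0\}$ (so $\widetilde h\ge c\,x_{n+1}$ on $\overline{B_{3/4}^+}$ by the Hopf lemma), the maximum principle gives $w\ge(\lambda_2-\lambda_1)\widetilde h$ in $B_1^+$. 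Evaluating at $y:=x_\circ+\rho\,\be_{n+1}\in B_{4\rho}(x_\circ)\cap\overline{B_{3/4}^+}$ yields $c\,(\lambda_2-\lambda_1)\,\rho\le w(y)\le C\rho^\kappa$, hence $\lambda_2-\lambda_1\le Cc^{-1}\rho^{\kappa-1}$.

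\emph{Step 3: conclusion by covering (the GMT lemma of \cite{FRS19}).} Fix a dyadic scale $\rho=2^{-j}$ and let $\mathcal Q_\rho$ be the family of $\le C\rho^{-n}$ dyadic $\rho$-cubes meeting $B_{1/2}\cap\{x_{n+1}=0\}$; for $Q\in\mathcal Q_\rho$ set $S_Q:=\{\lambda:Q\cap\Gamma_{\ge\kappa}(u_\lambda)\neq\varnothing\}$. By Step 2, $\operatorname{diam}S_Q\le C\rho^{\kappa-1}$, so $S_Q$ lies in an interval of length $\le C\rho^{\kappa-1}$. \emph{Case $3\le\kappa\le n+1$:} with $s:=n-\kappa+1\ge0$, from $\Gamma_{\ge\kappa}(u_\lambda)\subseteq\bigcup\{Q\in\mathcal Q_\rho:\lambda\in S_Q\}$ we get
\[
\int_0^1\mathcal H^s_{C\rho}\big(\Gamma_{\ge\kappa}(u_\lambda)\big)\,d\lambda\le C\rho^s\sum_{Q\in\mathcal Q_\rho}|S_Q|\le C\rho^{\,s+\kappa-1-n}=C ,
\]
uniformly in $\rho$; letting $\rho\downarrow0$ and applying Fatou's lemma gives $\mathcal H^{n-\kappa+1}(\Gamma_{\ge\kappa}(u_\lambda))<\infty$, hence $\dim_{\mathcal H}\Gamma_{\ge\kappa}(u_\lambda)\le n-\kappa+1$, for a.e.\ $\lambda$. \emph{Case $\kappa>n+1$:} the set $\mathcal E_\kappa=\{\lambda:\Gamma_{\ge\kappa}(u_\lambda)\neq\varnothing\}=\bigcup_{Q\in\mathcal Q_\rho}S_Q$ is covered, for every $\rho$, by $\le C\rho^{-n}$ intervals of length $\le C\rho^{\kappa-1}$; writing $\delta=C\rho^{\kappa-1}$ this bounds its covering number at scale $\delta$ by $C\delta^{-n/(\kappa-1)}$, so $\mathcal E_\kappa$ has upper Minkowski (hence Hausdorff) dimension $\le\frac{n}{\kappa-1}$ and $\Gamma_{\ge\kappa}(u_\lambda)=\varnothing$ for $\lambda\notin\mathcal E_\kappa$. \emph{The set $\Gamma_\infty$:} if $\Gamma_\infty(u_\lambda)\neq\varnothing$ then $\Gamma_{\ge\kappa}(u_\lambda)\neq\varnothing$ for every $\kappa$, so $\lambda\in\mathcal E:=\bigcap_{\kappa\in\N,\,\kappa>n+1}\mathcal E_\kappa$; since $\mathcal E\subseteq\mathcal E_\kappa$ has upper Minkowski dimension $\le\frac{n}{\kappa-1}$ for every such $\kappa$, letting $\kappa\to\infty$ shows $\mathcal E$ has Minkowski dimension $0$.

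\emph{Main obstacle.} The crux is Step 2, and within it two points require care: the uniform growth estimate at order-$\ge\kappa$ points, which is where the $C^\infty$-obstacle machinery of Section~\ref{sec.cinf} is needed (monotonicity of $\Phi_\tau$ down to a scale $r_M$, with constants uniform over the whole family $\{u_\lambda\}$), and the Hopf-type lower bound $w\gtrsim(\lambda_2-\lambda_1)\,x_{n+1}$, which is what converts the raising of the datum by $\lambda_2-\lambda_1$ into a genuine gain \emph{off} the thin space — precisely where the obstacle constraint is inactive and comparison with $\widetilde h$ is legitimate. The monotonicity of the family, the comparison of the two nearby Taylor polynomials, and the covering bookkeeping in Step 3 are routine.
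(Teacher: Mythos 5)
Your proposal is correct and follows essentially the route the paper indicates for the result of \cite{FR19}: a quantitative ``disappearance'' estimate ($\lambda_2-\lambda_1\le C\rho^{\kappa-1}$ whenever order-$\ge\kappa$ free boundary points of $u_{\lambda_1}$ and $u_{\lambda_2}$ share a $\rho$-ball, obtained from the generalized-frequency growth bound of Section~\ref{sec.cinf} together with the Hopf-type lower bound $u_{\lambda_2}-u_{\lambda_1}\ge c(\lambda_2-\lambda_1)x_{n+1}$ for the monotone family), combined with the covering/GMT argument in $\lambda$. The one detail to make explicit is that the Taylor order $\tau$ must be chosen with $\tau\ge\kappa-1$ (so $\tau=\tau(\kappa)$, taking an intersection over $\kappa\in\N$ for the $\Gamma_\infty$ bullet), since the bound $\le C\rho^{\kappa}$ on $B_{4\rho}(x_\circ)$ for the difference of the two reference functions $\varphi-Q_{\tau,z_i}+Q^h_{\tau,z_i}$ comes from a Taylor error of size $O(\rho^{\tau+1})$, not from the proximity of $z_1,z_2$ alone.
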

 
That is, we show that, from frequency 3, the higher the frequency, the rarer the points are. This is not unexpected: roughly speaking, around points of frequency $\kappa$ the solution $u$ detaches like a power $\kappa$ from zero. The higher the value of $\kappa$, the less space we have to fit such points for many $\lambda$ (or heights).  
 
 On the other hand, by means of a Monneau-type monotonicity formula one can also show that the set $\bigcup_{\lambda\in [0,1]}\Gamma_{2}(u_\lambda)$ (union of singular points of order $2$ for all $\lambda\in [0,1]$) is contained in a countable union of $(n-1)$-dimensional $C^1$ manifolds. This result is interesting in itself: from Theorem~\ref{thm.gp09} we knew that for each fixed $\lambda_*$, $\Gamma_2(u_{\lambda_*})$ is contained in the union of $(n-1)$-dimensional $C^1$ manifolds. We now claim that, in fact, we can consider all $\Gamma_2(u_\lambda)$ for $\lambda\in (0, 1)$, and this is still contained in the union of $(n-1)$-dimensional $C^1$ manifolds (in some sense, Monneau's monotonicity formula does not care about the $\lambda$ we are at). 
  
 As a consequence, the singular set cannot be too large for too many $\lambda\in (0, 1)$, and we have:
 
  \begin{prop}[\cite{FR19}]
 \label{prop.singpoints}
 Let $\varphi\in C^\infty(B_1')$ and let $g\in C^0(\partial B_1)$ even with respect to $x_{n+1}$. Let $\{u_\lambda\}_{\lambda\in [0,1]}$ the family of solutions to the thin obstacle problem \eqref{eq.ulambda}. 
 
 Then, $\Gamma_{2}(u_\lambda)$ has dimension at most $n-3$ for a.e. $\lambda\in [0, 1]$.  
 \end{prop}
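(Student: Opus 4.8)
The plan is to follow the scheme of \cite{FR19}, which adapts the generic-regularity strategy of \cite{FRS19} for the classical obstacle problem: reduce the claim to a Hausdorff dimension bound for a single \emph{space--time} set and then slice in the parameter $\lambda$. Recall the stratification of Theorem~\ref{thm.gp09}: $\Gamma_2(u_\lambda) = \bigcup_{\ell=0}^{n-1}\Gamma_2^\ell(u_\lambda)$ with each $\Gamma_2^\ell(u_\lambda)$ locally contained in an $\ell$-dimensional $C^1$ manifold, so that $\dim_{\mathcal H}\Gamma_2^\ell(u_\lambda)\le\ell\le n-3$ for \emph{every} $\lambda$ whenever $\ell\le n-3$. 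Hence only the two top strata $\ell=n-2$ and $\ell=n-1$ need to be controlled, and it suffices to show that for a.e.\ $\lambda\in[0,1]$ these have Hausdorff dimension at most $n-3$.

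First I would prove a Monneau-type monotonicity formula at order-$2$ singular points that is \emph{uniform in} $\lambda$ along the family $\{u_\lambda\}$: denoting by $p^\lambda_{x_\circ}\in\mathcal{P}_2$ the candidate blow-up at $x_\circ\in\Gamma_2(u_\lambda)$, the map
\[
r\longmapsto \frac{1}{r^{n+4}}\int_{\partial B_r(x_\circ)}\bigl(u_\lambda-\varphi-p^\lambda_{x_\circ}(\cdot-x_\circ)\bigr)^2
\]
is non-decreasing for $0<r<r_\circ$, with $r_\circ$ and the (standard) obstacle correction independent of $\lambda$; here one invokes $\varphi\in C^\infty$ and the generalized frequency function of Section~\ref{sec.cinf}. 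As in Theorem~\ref{thm.gp09} this yields $u_\lambda(x)=\varphi(x)+p^\lambda_{x_\circ}(x-x_\circ)+o(|x-x_\circ|^2)$, now with $(x_\circ,\lambda)\mapsto p^\lambda_{x_\circ}$ \emph{jointly} continuous. Running Whitney's extension theorem with $\lambda$ as an inert parameter, one concludes that the footprint $\bigcup_{\lambda\in[0,1]}\Gamma_2(u_\lambda)$ lies in a countable union of $(n-1)$-dimensional $C^1$ manifolds $\{M_i\}$.

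Next I would quantify the principle that ``the more degenerate a free boundary point is, the faster it disappears as $\lambda$ grows''. Using the monotonicity $\lambda\mapsto u_\lambda$, the nesting $\Lambda(u_\lambda)\subseteq\Lambda(u_{\lambda'})$ for $\lambda'<\lambda$, and a comparison/strict-monotonicity estimate for $u_\lambda-u_{\lambda'}$ off the coincidence set, the aim is to prove that the space--time sets
\[
\mathcal{N}^\ell:=\bigl\{(x,\lambda)\in\bigl(B_1\cap\{x_{n+1}=0\}\bigr)\times[0,1]:\ x\in\Gamma_2^\ell(u_\lambda)\bigr\}
\]
satisfy $\dim_{\mathcal H}\mathcal{N}^\ell\le n-2$ for all $\ell$: for $\ell\le n-3$ this is immediate ($\dim_{\mathcal H}\mathcal{N}^\ell\le\ell+1\le n-2$), while for $\ell=n-2,n-1$ one must use up the extra room in the $\lambda$-direction, establishing — inside each chart of $M_i\times[0,1]$ — a confinement property of the type required by Lemma~\ref{lem.DimLemma} (this is the GMT lemma borrowed from \cite{FRS19}). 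Granting this, since $(x,\lambda)\mapsto\lambda$ is $1$-Lipschitz, the Eilenberg (coarea) inequality gives $\int_0^1\mathcal{H}^{n-3+\eps}\bigl(\Gamma_2^\ell(u_\lambda)\bigr)\,d\lambda\le C\,\mathcal{H}^{n-2+\eps}(\mathcal{N}^\ell)=0$ for every $\eps>0$, hence $\mathcal{H}^{n-3+\eps}(\Gamma_2^\ell(u_\lambda))=0$ for a.e.\ $\lambda$; letting $\eps=1/k\downarrow0$ and intersecting over $k\in\N$ and over the finitely many $\ell$ gives $\dim_{\mathcal H}\Gamma_2(u_\lambda)\le n-3$ for a.e.\ $\lambda$ (in particular $\Gamma_2(u_\lambda)=\varnothing$ for a.e.\ $\lambda$ when $n\le2$).

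The main obstacle is this second step. The Monneau formula controls each fixed $u_\lambda$, but one needs to understand how the expansion $u_\lambda=\varphi+p^\lambda_{x_\circ}+o(\cdot)$ and the coincidence set $\Lambda(u_\lambda)$ near $x_\circ$ deform as $\lambda$ varies, and in particular to prove the quantitative non-accumulation estimate that prevents a fixed-size portion of the two top strata from surviving over a whole interval of $\lambda$'s. Once that estimate is in hand, the $\lambda$-uniform Whitney extension of the first step and the coarea slicing of the third are routine.
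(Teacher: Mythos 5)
Your high-level architecture --- a $\lambda$-uniform Monneau/Whitney step showing that $\bigcup_{\lambda\in[0,1]}\Gamma_2(u_\lambda)$ is contained in countably many $(n-1)$-dimensional $C^1$ manifolds, followed by a slicing argument in the parameter --- is indeed the skeleton of the argument of \cite{FR19} that the survey sketches. But the decisive ingredient is missing, and you explicitly defer it: the quantitative ``cleaning'' estimate stating, roughly, that if $x_\circ\in\Gamma_2(u_{\lambda_\circ})$ then $B_r(x_\circ)$ contains no order-$2$ points of $u_\lambda$ once $\lambda-\lambda_\circ\ge Cr^2$ (obtained in \cite{FR19} by comparing the quadratic detachment $u_{\lambda_\circ}-\varphi\lesssim r^2$ at frequency-$2$ points with the lift of $u_\lambda$, which grows linearly in $\lambda$ off the contact set). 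This estimate \emph{is} the content of the proposition: without it, the Monneau/Whitney containment only yields the trivial slice bound $n-1$.

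More seriously, the reduction you propose --- prove $\dim_{\mathcal H}\mathcal{N}^\ell\le n-2$ for the space--time set and then slice via Eilenberg's inequality --- cannot work as stated. A single exceptional value $\lambda_0$ for which $\Gamma_2(u_{\lambda_0})$ is $(n-1)$-dimensional (exactly the kind of bad solution the generic statement tolerates, and which cannot be ruled out) already forces $\dim_{\mathcal H}\mathcal{N}\ge n-1$; and even granting the cleaning estimate, it makes the assignment $x\mapsto(x,\lambda(x))$ locally Lipschitz, so the best one can hope for is $\dim_{\mathcal H}\mathcal{N}\le n-1$, not $n-2$. The gain of two dimensions per slice comes from the anisotropic relation between the spatial scale $r$ and the parameter scale $r^2$, and it must be harvested scale by scale rather than through an isotropic dimension bound: cover the footprint (inside the $C^1$ manifolds, hence with $(n-1)$-dimensional Minkowski-type bounds) by $\sim r^{-(n-1)}$ balls of radius $r$, observe that each such ball meets $\Gamma_2(u_\lambda)$ only for $\lambda$ in an interval of length $Cr^2$, and estimate
\[
\int_0^1 \mathcal{H}^{n-3+\eps}_{r}\bigl(\Gamma_2(u_\lambda)\bigr)\,d\lambda \;\lesssim\; r^{-(n-1)}\cdot r^{2}\cdot r^{\,n-3+\eps}\;=\;r^{\eps}\;\longrightarrow\;0,
\]
which is the role played by the GMT lemma of \cite{FRS19} invoked in \cite{FR19}; it is not a consequence of $\mathcal{H}^{n-2+\eps}(\mathcal{N})=0$ plus coarea, since that hypothesis is generally false. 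Relatedly, your claim that $\dim_{\mathcal H}\mathcal{N}^\ell\le\ell+1$ is ``immediate'' from the slice bounds is not a valid Fubini-type inequality for Hausdorff dimension (it is harmless there, because those strata are already controlled slice-wise by Theorem~\ref{thm.gp09}, but it reflects the same misconception about slicing Hausdorff dimension that undermines the main step).
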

 
 And finally, combining Proposition~\ref{prop.gammakappa}, Proposition~\ref{prop.singpoints}, and Proposition~\ref{prop.smallother}, we get the generic regularity theorem we wanted:
 \begin{thm}[\cite{FR19}]
 Let $\varphi\in C^\infty(B_1')$ and let $g\in C^0(\partial B_1)$ even with respect to $x_{n+1}$.  Let $\{u_\lambda\}_{\lambda\in [0,1]}$ the family of solutions to the thin obstacle problem \eqref{eq.ulambda}. 
 
 Then, the set ${\rm Deg}(u_\lambda)$ has Hausdorff dimension at most $n-2$ for a.e. $\lambda\in [0,1]$. 
 \end{thm}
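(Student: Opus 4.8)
The plan is to decompose the degenerate set according to frequency and bound each piece separately, using the three propositions just stated. First I would recall the stratification of the free boundary from \eqref{eq.cffb}: for a solution $u_\lambda$ to the thin obstacle problem with $C^\infty$ obstacle,
\[
{\rm Deg}(u_\lambda) = \Gamma(u_\lambda)\setminus {\rm Reg}(u_\lambda) = \Gamma_2(u_\lambda) \cup \Gamma_{\ge 3}(u_\lambda),
\]
where I am using that the frequency at a degenerate point is at least $2$, and that between $2$ and $3$ the only admissible frequencies are $2$ itself (since $\frac52 = 2m-\frac12$ with $m$ not an integer is excluded, and the next half-integer frequency $\frac72$ and the next even frequency $4$ are already $\ge 3$). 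More precisely, $\Gamma_{2}(u_\lambda)$ is the set of singular points of order $2$, while all other degenerate points—whether in $\Gamma_{\rm even}$ with frequency $\ge 4$, in $\Gamma_{\rm odd}$, in $\Gamma_{\rm half}$, in $\Gamma_*$, or in $\Gamma_\infty$—have frequency (or order, in the sense of the generalized frequency formula of Section~\ref{sec.cinf}) at least $3$, so they lie in $\Gamma_{\ge 3}(u_\lambda)$.

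Next I would invoke the three estimates. By Proposition~\ref{prop.singpoints}, for a.e.\ $\lambda\in[0,1]$ the set $\Gamma_2(u_\lambda)$ has Hausdorff dimension at most $n-3 \le n-2$. By Proposition~\ref{prop.gammakappa} applied with $\kappa = 3$ (note $3 \le n+1$ precisely when $n\ge 2$; when $n=1$ the claim is trivial since $n-2<0$ forces an empty set and indeed $\Gamma_*(u)=\varnothing$, $\Gamma_{\ge 3}$ consists only of isolated higher-order points), the set $\Gamma_{\ge 3}(u_\lambda)$ has Hausdorff dimension at most $n-3+1 = n-2$ for a.e.\ $\lambda\in[0,1]$. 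This already covers $\Gamma_\infty(u_\lambda)\subset \Gamma_{\ge 3}(u_\lambda)$, and more cleanly the third bullet of Proposition~\ref{prop.gammakappa} gives that $\Gamma_\infty(u_\lambda)$ is empty outside a $\lambda$-set of Minkowski (hence Hausdorff) dimension $0$. Finally, although $\Gamma_*(u_\lambda)\subset\Gamma_{\ge 3}(u_\lambda)$ is already controlled, Proposition~\ref{prop.smallother} (which is stated for zero obstacle but extends to the $C^\infty$ obstacle case via the generalized frequency formula, exactly as noted after Proposition~\ref{prop.Ccontact} and in Section~\ref{sec.cinf}) independently gives $\dim_{\mathcal H}\Gamma_*(u_\lambda)\le n-2$ for \emph{every} $\lambda$, which is a useful sanity check.

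To conclude, I would take the (countable, in fact finite) intersection over the relevant null sets of bad $\lambda$'s: let $N_1\subset[0,1]$ be the null set where Proposition~\ref{prop.singpoints} fails, and $N_2\subset[0,1]$ the null set where the $\kappa=3$ case of Proposition~\ref{prop.gammakappa} fails. For $\lambda\in[0,1]\setminus(N_1\cup N_2)$—a full-measure set—we have
\[
\dim_{\mathcal H}{\rm Deg}(u_\lambda) = \max\big\{\dim_{\mathcal H}\Gamma_2(u_\lambda),\, \dim_{\mathcal H}\Gamma_{\ge 3}(u_\lambda)\big\} \le \max\{n-3,\, n-2\} = n-2,
\]
using that Hausdorff dimension is stable under finite (indeed countable) unions. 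This is exactly the assertion. The only subtle point—the step I expect to require the most care—is making sure the decomposition ${\rm Deg}(u_\lambda) = \Gamma_2(u_\lambda)\cup\Gamma_{\ge 3}(u_\lambda)$ is genuinely exhaustive, i.e.\ that there is no admissible frequency strictly between $2$ and $3$ other than $2$; this follows from the classification in Theorem~\ref{thm.blowupthm} (frequencies lie in $\{\frac32\}\cup[2,\infty)$) combined with the observation that the list of \emph{known-to-occur} frequencies below $3$ is just $\{\frac32, 2\}$, and crucially that Proposition~\ref{prop.gammakappa}'s bound for $\Gamma_{\ge 3}$ does not require knowing which frequencies in $[3,\infty)$ actually occur—it bounds the whole set $\Gamma_{\ge 3}$ at once. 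Thus no case analysis of $\Gamma_{\rm odd}$, $\Gamma_{\rm half}$, $\Gamma_*$ individually is needed beyond what the cited propositions already provide.
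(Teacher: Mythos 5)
Your overall strategy---stratify the degenerate set by frequency and combine Propositions~\ref{prop.singpoints}, \ref{prop.gammakappa}, and \ref{prop.smallother}---is exactly how the paper concludes, but one step of your write-up is unjustified and, as stated, wrong. You claim ${\rm Deg}(u_\lambda)=\Gamma_2(u_\lambda)\cup\Gamma_{\ge 3}(u_\lambda)$, and in particular $\Gamma_*(u_\lambda)\subset\Gamma_{\ge 3}(u_\lambda)$, on the grounds that no frequency strictly between $2$ and $3$ other than $2$ is admissible. That is not known: Theorem~\ref{thm.blowupthm} only restricts frequencies to $\left\{\frac32\right\}\cup[2,\infty)$, and the set $\Gamma_*(u)$ is defined precisely to account for possible frequencies in $(2,\infty)$ outside $\{2m,2m+1,2m-\frac12\}_{m\in\N}$---including values in $(2,3)$---whose existence or non-existence is an open problem; Theorem~\ref{thm.epiCSV} only excludes a small neighbourhood $(2,2+c_1)\setminus\{2\}$ of $2$, not all of $(2,3)$. ``Known to occur'' cannot substitute for ``admissible'' here, so your two-piece decomposition is not known to be exhaustive.

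Fortunately the missing piece is already in your text: the correct decomposition is ${\rm Deg}(u_\lambda)\subset\Gamma_2(u_\lambda)\cup\Gamma_*(u_\lambda)\cup\Gamma_{\ge 3}(u_\lambda)$, and Proposition~\ref{prop.smallother} (extended to $C^\infty$ obstacles via the generalized frequency of Section~\ref{sec.cinf}, as you note) gives $\dim_{\mathcal{H}}\Gamma_*(u_\lambda)\le n-2$ for \emph{every} $\lambda$. This is not a ``sanity check'' but an essential third ingredient---it is exactly why the paper combines all three propositions rather than just two. With that correction, the rest of your argument (Proposition~\ref{prop.singpoints} for $\Gamma_2$, Proposition~\ref{prop.gammakappa} with $\kappa=3$ for $\Gamma_{\ge 3}$, stability of Hausdorff dimension under finite unions, and intersecting the two full-measure sets of good $\lambda$) is correct and coincides with the paper's proof.
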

 
 In particular, the free boundary is smooth up to a lower dimensional set, for almost every solution.
 
 The previous theorem also holds true for obstacles with lower regularity. Namely, in the proof of the result, only $C^{3,1}$ regularity of the obstacle is really used. 
\section{Summary}
\label{sec.12}
Let us finish with a summary of the known results for the solutions to the thin obstacle problem. 

Let $\varphi\in C^\infty(B_1')$ and consider an even solution to the thin obstacle problem, with obstacle $\varphi$, 
\begin{equation}
\label{eq.sol}
  \left\{ \begin{array}{rcll}
   u& \ge &\varphi & \textrm{ on } B_1\cap \{x_{{n+1}} = 0\}\\
  \Delta u&=&0 & \textrm{ in } B_1\setminus(\{x_{n+1} = 0\}\cap \{u = \varphi\})\\
  \Delta u&\le&0 & \textrm{ in } B_1.
  \end{array}\right.
\end{equation}

Then, the solution $u$ is $C^{1,1/2}$ on either side of the obstacle. That is, there exists a constant $C$ depending only on $n$ such that
\[
\|u\|_{C^{1,1/2}(B_{1/2}^+)}+\|u\|_{C^{1,1/2}(B_{1/2}^-)} \le C\left(\|\varphi\|_{C^{1,1}(B_1')} + \|u \|_{L^\infty(B_1)}\right).
\]

Moreover, if we denote $\Lambda(u) := \{u = \varphi\}$ the contact set, the boundary of $\Lambda(u)$ in the relative topology of $\R^n$, $\partial_{\R^n}\Lambda(u)$, is the free boundary, and can be divided into two sets
\[
\Gamma(u) = {\rm Reg}(u) \cup {\rm Deg}(u),
\]
the set of {\em regular points},
\[
{\rm Reg}(u) := \left\{x = (x', 0)\in \Gamma(u) : 0<c r^{3/2}\le \sup_{B_r'(x')} (u - \varphi) \le C r^{3/2},\quad\forall r\in (0, r_\circ)\right\},
\]
and the set of non-regular points or {\em degenerate points}
\[
{\rm Deg}(u) := \left\{x = (x', 0)\in \Gamma(u) : 0\le \sup_{B_r'(x')} (u - \varphi) \le Cr^{2},\quad\forall r\in (0, r_\circ)\right\},
\]

Alternatively, each of the subsets can be defined according to the order of the blow-up (the frequency) at that point. Namely, the set of regular points are those whose blow-up is of order $\frac32$, and the set of degenerate points are those whose blow-up is of order $\kappa$ for some $\kappa \in [2, \infty]$.

The free boundary can be further stratified as
\begin{equation}
\label{eq.FBst}
\Gamma(u) = \Gamma_{3/2} \cup \Gamma_{\rm even} \cup \Gamma_{\rm odd} \cup \Gamma_{\rm half} \cup \Gamma_* \cup \Gamma_{\infty},
\end{equation}
where:
\begin{itemize}[leftmargin=5.5mm]
\item $ \Gamma_{3/2} = {\rm Reg}(u)$ is the set of regular points. They are an open $(n-1)$-dimensional subset of $\Gamma(u)$, and it is $C^\infty$ (see \cite{ACS08, KPS15, DS16}).

\item $\Gamma_{\rm even} = \bigcup_{m\ge 1} \Gamma_{2m}(u)$ denotes the set of points whose blow-ups have even homogeneity. Equivalently, they can also be characterised as those points of the free boundary where the contact set has zero density, and they are often called singular points. They are contained in the countable union of $C^1$ $(n-1)$-dimensional manifolds; see \cite{GP09}. Generically, however, points in $\Gamma_2(u)$ have dimension at most $n-3$, and points in $\Gamma_{2m}(u)$ have dimension at most $n-2m$ for $m\ge 2$; see \cite{FR19}. 

\item $\Gamma_{\rm odd}= \bigcup_{m\ge 1} \Gamma_{2m+1}(u)$ is, a priori, at most $(n-1)$-dimensional and it is $(n-1)$-rectifiable (see \cite{KW13, FS18,  FRS19}), although it is not known whether it exists. Generically, $\Gamma_{2m+1}(u)$ has dimension at most $n-2m$; see \cite{FR19}. 

\item  $\Gamma_{\rm half} = \bigcup_{m\ge 1} \Gamma_{2m+3/2}(u)$ corresponds to those points with blow-ups of order $\frac72$, $\frac{11}{2}$, etc. They are much less understood than regular points, although in some situations they have a similar behaviour. The set $\Gamma_{\rm half}$ is an $(n-1)$-dimensional subset of the free boundary and it is a $(n-1)$-rectifiable set (see \cite{FS18, KW13, FS19}). Generically, the set $\Gamma_{2m+3/2}(u)$ has dimension at most $n-2m-1/2$. 

\item $\Gamma_*$ is the set of all points with homogeneities $\kappa\in (2, \infty)$, with $\kappa\notin \N$ and $\kappa\notin 2\N-\frac12$. This set has Hausdorff dimension at most $n-2$, so it is always \emph{small}.

\item  $\Gamma_{\infty}$ is the set of points with infinite order (namely, those points at which $u-\varphi$ vanishes at infinite order). For general $C^\infty$ obstacles it could be a huge set, even a fractal set of infinite perimeter with dimension exceeding $n-1$. When $\varphi$ is analytic, instead, $\Gamma_\infty$ is empty. Generically, this set is empty; see \cite{FR19}.
\end{itemize}

\end{document}